\documentclass[12pt]{article}
\usepackage{verbatim}
\usepackage{amssymb}
\usepackage{amsthm}
\usepackage{amsmath}
\usepackage{mathrsfs}
\usepackage[margin=1in]{geometry}
\usepackage[usenames,dvipsnames] {color}

\definecolor{dark_purple}{rgb}{0.4, 0.0, 0.4}
\definecolor{dark_green}{rgb}{0.0, 0.7, 0.0}

\newcommand\black{\color{black}}
\numberwithin{equation}{section}

\title{Norm Preserving Extensions of Holomorphic Functions Defined on Varieties in $\c^n$}
\author{Jim Agler
\and
\L{}ukasz Kosi\'nski
\thanks{Partially supported by the NCN grant SONATA BIS no. 2017/26/E/ST1/00723}
\and
John E. M\raise.5ex\hbox{c}Carthy
\thanks{Partially supported by National Science Foundation Grant  
DMS 2054199}
}
\date{\today}

\newcommand{\mc}{M\raise.45ex\hbox{c}Carthy}

\def\d{\mathbb{D}}
\def\h{\mathcal{H}}

\def\be{\begin{equation}}
\def\ee{\end{equation}}
\def\norm#1{\| #1 \|}
\def\m{\mathcal{M}}
\def\f{\mathcal{F}}
\def\n{\mathcal{N}}
\def\mn{\mathcal{M}_n}
\def\md{\mathcal{M}^{[d]}}
\def\mone{\mathcal{M}^{[1]}}
\def\mdcom{\mathcal{M}^{[d]}_{\rm com}}

\def\pd{\mathbb{P}_d}
\def\pdcom{{\rm P}_d}
\def\pdij{\mathbb{P}_d^{\, I\times J}}
\def\pdcomij{{\rm P}_d^{\, I\times J}}

\def\c{\mathbb{C}}
\def\d{\mathbb{D}}
\def\t{\mathbb{T}}

\def\h{\mathcal{H}}

\def\l{\mathcal{L}}
\def\f{\mathcal{F}}

\def\set#1#2{\{ #1 \, | \, #2\}}

\def\car{(\Omega,V)}

\def\hol{{\rm Hol}}
\def\hinf{{\rm Hol}^\infty}
\def\ess{\mathscr{S}}
\def\herg{\mathscr{H}}
\def\R{\mathscr{R}}
\def\u{\mathscr{U}}
\def\C{\mathscr{C}}
\def\ip#1#2{\langle#1,#2\rangle}
\def\b{\mathcal{B}}
\def\ball{\rm ball\,}

\DeclareMathOperator{\spn}{span}
\newcommand{\threepartdef}[6]
{
	\left\{
		\begin{array}{ll}
			#1 & \mbox{if } #2 \\ \\
			#3 & \mbox{if } #4 \\ \\
            #5 & \mbox{if } #6
		\end{array}
	\right.
}
\def\O{\Omega}
\def\l{\lambda}
\def\be{\begin{equation}}
\def\ee{\end{equation}}
\def\calg{{\mathcal G}}
\newcommand\calv{{\mathcal V}}

\begin{document}

\bibliographystyle{plain}
\theoremstyle{definition}
\newtheorem{defin}[equation]{Definition}
\newtheorem{lem}[equation]{Lemma}
\newtheorem{prop}[equation]{Proposition}
\newtheorem{thm}[equation]{Theorem}
\newtheorem{claim}[equation]{Claim}
\newtheorem{ques}[equation]{Question}
\newtheorem{remark}[equation]{Remark}
\newtheorem{fact}[equation]{Fact}
\newtheorem{axiom}[equation]{Technical Axiom}
\newtheorem{newaxiom}[equation]{New Technical Axiom}
\newtheorem{cor}[equation]{Corollary}
\newtheorem{exam}[equation]{Example}
\maketitle

\begin{abstract}
If $V$ is an analytic set in a pseudoconvex domain $\Omega$, we show there is always a pseudoconvex domain $G \subseteq \Omega$ that contains $V$ and has the property that every
bounded holomorphic function on $V$ extends to a bounded holomorphic function on $G$ with the same norm. We find such a $G$ for some particular analytic sets.

 When $\Omega$ is an operhedron
we show there is a norm on holomorphic functions on $V$ that can always be preserved by extensions to $\Omega$.
\end{abstract}

\section{Motivation}

Let $\Omega\subset \mathbb C^n$ be a domain of holomorphy and $V$ be a subset. We study when
a bounded, holomorphic function on $V$ extends to a bounded holomorphic function with the same norm defined on the whole domain $\Omega$. If $V$ is a discrete set, this is  an interpolation problem.
In this note  we are interested in the case when $V$ is a fatter set. It is then natural to assume it has an analytic structure.

The first significant result in this direction was obtained in \cite{agmc_vn}, where the simplest non-trivial case $\Omega = \mathbb D^2$ was studied. It was shown using operator theory  that if $V\subset \mathbb D^2$ is algebraic,  then it must be a retract to have the isometric extension property. The function theory argument of P. Thomas \cite{tho03} inspired further research: it turns out that a similar result holds for the $n$-dimensional Euclidean ball and more generally for strictly convex domains and strongly linearly convex domains in $\mathbb C^2$ \cite{kmc19}. In all these cases $V$ was shown to be a retract. Some methods developed there remained true for strongly convex and linearly convex domains in $\mathbb C^n$ with arbitrary $n>1$. In particular, it was shown that  $V$ was always totally geodesic with respect to the Kobayashi metric on $G$ --- consequently, it had no singularities.

The aim of this paper is to show a  result in the opposite direction. Using  a totally different approach, we prove that if $V$ is an analytic set in a pseudoconvex domain $\Omega$, there is always a pseudoconvex domain $G \subseteq \Omega$ that contains $V$ and has the property that every bounded holomorphic function on $V$ extends to a bounded holomorphic function on $G$ with the same norm. We find such a $G$ for some analytic sets. These particular examples show that the situation when $G$ is a convex domain while $V$ has singularities (that are either cusps or obtained as intersections of two curves) can occur. This is intriguing as the crucial role in the strategy of proving extension results has been so far played by the Lempert theory of invariant functions on convex domains.

\section{Introduction}

If $\Omega$ is a domain of holomorphy in $\c^n$, then an \emph{analytic set in $\Omega$} is a relatively closed set $V$ in $\Omega$ such that for each point $\lambda\in V$ there exist a neighborhood $U$ of $\lambda$ ($U$ is open in $\c^n$) and holomorphic functions $f_1, f_2,\ldots, f_m$ on $U$ such that
\[
V\cap U =\set{\mu\in U}{f_i(\mu)=0 \text{ for } i=1,2,\ldots,m}.
\]
We say that a function $f:V\to \c$ is \emph{holomorphic on $V$} if for each $\lambda \in V$ there exist an open set $U \subseteq \c^n$ containing $\lambda$ and a holomorphic function $F$ defined on $U$ such that $F(\mu) =f(\mu)$ for all $\mu \in V \cap U$.

The following fact is  one of Henri Cartan's numerous profound contributions to several complex variables.
\begin{thm}
\label{thm11}
(Cartan Extension Theorem) 
If $\O$ is a domain of holomorphy, $V$ is an analytic set in $\O$,
and $f$ is a holomorphic function defined on $V,$ then there exists a holomorphic function $F$ defined on $\Omega$ such that
\[
f = F\, |\, V.
\]
\end{thm}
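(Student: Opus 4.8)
The plan is to deduce the theorem from the standard apparatus of coherent analytic sheaves, exploiting the fact that a domain of holomorphy in $\c^n$ is Stein and hence that Cartan's Theorem B applies. First I would form the ideal sheaf $\mathcal{I}_V \subseteq \mathcal{O}_\Omega$ whose stalk at a point $\lambda$ consists of the germs of holomorphic functions vanishing on $V$ near $\lambda$. The crucial structural input is the Oka--Cartan coherence theorem, which guarantees that the ideal sheaf of an analytic set is coherent. Writing $\mathcal{O}_V := \mathcal{O}_\Omega / \mathcal{I}_V$ for the structure sheaf of $V$, one obtains a short exact sequence of sheaves on $\Omega$:
\[
0 \longrightarrow \mathcal{I}_V \longrightarrow \mathcal{O}_\Omega \longrightarrow \mathcal{O}_V \longrightarrow 0 .
\]

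Next I would reconcile the definition of holomorphicity adopted above with sheaf language. The requirement that $f$ locally extend to a holomorphic function on a full neighborhood in $\c^n$ says exactly that $f$ determines a global section of $\mathcal{O}_V$, since the stalk $\mathcal{O}_{V,\lambda}=\mathcal{O}_{\Omega,\lambda}/\mathcal{I}_{V,\lambda}$ is by construction a quotient of germs of ambient holomorphic functions. Thus $H^0(\Omega,\mathcal{O}_\Omega)=\o(\Omega)$ is the space of holomorphic functions on $\Omega$, $H^0(\Omega,\mathcal{O}_V)$ is the space of holomorphic functions on $V$, and the theorem is precisely the assertion that the restriction map $H^0(\Omega,\mathcal{O}_\Omega) \to H^0(\Omega,\mathcal{O}_V)$ is surjective.

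Then I would pass to the long exact cohomology sequence associated to the short exact sequence:
\[
H^0(\Omega,\mathcal{O}_\Omega) \longrightarrow H^0(\Omega,\mathcal{O}_V) \longrightarrow H^1(\Omega,\mathcal{I}_V) \longrightarrow \cdots .
\]
Because $\Omega$ is a domain of holomorphy, hence Stein, and $\mathcal{I}_V$ is coherent, Cartan's Theorem B gives $H^1(\Omega,\mathcal{I}_V)=0$. Exactness then forces the restriction map to be surjective, which is exactly the desired conclusion that every holomorphic $f$ on $V$ equals $F\,|\,V$ for some $F$ holomorphic on $\Omega$.

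The main obstacle is not the homological bookkeeping but the two deep pillars on which it rests: the coherence of $\mathcal{I}_V$ and the vanishing theorem for Stein spaces. An alternative, more hands-on route would be to glue local holomorphic extensions of $f$ with a smooth partition of unity to produce a smooth $\tilde f$ with $\tilde f\,|\,V=f$, and then correct it to a holomorphic function by solving $\bar\partial u = \bar\partial \tilde f$ with $u$ constrained to vanish on $V$; but enforcing that constraint reintroduces precisely the coherent-sheaf and Theorem B machinery, or else $L^2$ estimates with a plurisubharmonic weight singular along $V$. For that reason I expect the cohomological argument to be the cleanest path.
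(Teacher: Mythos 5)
Your proof is correct, and it is essentially the argument the paper itself points to: the paper does not prove Theorem \ref{thm11} but cites it as ``a consequence of Cartan's deep work in the theory of analytic sheaves,'' deferring to \cite{car51}, \cite{gur}, \cite{hor}, and \cite{tay}, where precisely your route (coherence of the full ideal sheaf $\mathcal{I}_V$ by Oka--Cartan, the identification of holomorphic functions on $V$ with global sections of $\mathcal{O}_\Omega/\mathcal{I}_V$, and vanishing of $H^1(\Omega,\mathcal{I}_V)$ by Theorem B since a domain of holomorphy is Stein) is carried out. Your reconciliation of the paper's local-extension definition of holomorphicity on $V$ with sections of the quotient sheaf is the right point to check, and you handle it correctly.
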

This theorem, a consequence of Cartan's deep work in the theory of analytic sheaves, was originally presented in \cite{car51}. Accessible proofs can be found in the classic texts \cite{gur} and \cite{hor} (Theorem 7.4.8) and also in the more modern text  \cite{tay} (Corollary 11.5.2).

 This prompts the following definition.
 \begin{defin}\label{car10}
We say that a pair $(\Omega,V)$ is a \emph{Cartan pair} if $\Omega$ is a domain of holomorphy in $\c^n$ for some $n$ and $V$ is an analytic set in $\Omega$.
\end{defin}

In this paper we shall be interested in cases where one can obtain bounds in  Cartan's theorem.
 If $(\Omega,V)$ is a Cartan pair, say that $(\Omega,V)$ has the \emph{bounded extension property} if every bounded holomorphic function on $V$ has a bounded extension to $\Omega$.  In the case when $\Omega=\d^n$, the polydisc, geometric conditions which imply that $(\Omega,V)$ has the bounded extension property have been investigated by Herbert Alexander \cite{ale69}, Walter Rudin \cite{rud69} (Theorem 7.5.1), Edgar Lee Stout \cite{sto75}, Guennadi Henkin and Pierre Polyakov \cite{henpol84}, and Greg Knese \cite{kn08ua}. One lesson to be gleaned from these papers is that the bounded extension property is quite sensitive to the interaction of the boundary of $V$ and $\t^n$, the distinguished boundary of $\d^n$. For example, in \cite{ale69} it is shown that a certain class of varieties in $\d^n$ introduced by Rudin have the bounded extension property if and only if $\partial V \cap \t^d = \varnothing$ and in \cite{kn08ua} it is shown that if $V$ is an algebraic distinguished variety in $\d^2$ (i.e., $V$ is an algebraic set in $\c^2$ and $\partial (V \cap \d^2) \subseteq \t^2$) with no singularities on $\t^2$, then $(\d^2,V \cap \d^2)$ has the bounded extension property. Other work on the bounded extension property,
 when $\Omega$ is assumed to be strictly pseudoconvex (but its  boundary is not assumed to be smooth) and $V$ is  assumed to have the form $V=D\cap \Omega$, where $D$ is a relatively closed complex submanifold of a neighborhood of $\Omega^-$, can be found in \cite{henlei84} and \cite{aac99}.
 
 A stronger notion was studied in \cite{agmc_vn}, namely when an extension can always be found with the same norm. This will be the focus of the current paper.
\begin{defin}\label{car20}
 If $(\Omega,V)$ is a Cartan pair, then we say that $(\Omega,V)$ has the \emph{norm preserving extension property} (or simply, $(\Omega,V)$ is an \emph{np pair}) if for every bounded holomorphic function $f$ on $V$ there exists a holomorphic function $F$ on $\Omega$ such that $F(\mu) =f(\mu)$ for all $\mu \in V$ and
 \[
 \sup_{\mu \in \Omega} |F(\mu)| = \sup_{\mu \in V} |f(\mu)|.
 \]
 \end{defin}
 We organize our results in this paper around the following three fundamental problems associated with the study of np pairs. \\ \\
{\bf Problem A.} Given a domain of holomorphy $\Omega$, find all analytic sets $V$ in $\Omega$ such that $(\Omega,V)$ is an np pair.\\ \\
{\bf Problem B.} Given a Cartan pair $(\Omega,V)$, find all domains of holomorphy $G$ such that $(G,V)$ is an np pair.\\ \\
{\bf Problem C.} Given a Cartan pair $(\Omega,V)$, how can one concretely construct a domain of holomorphy $G$ such that $(G,V)$ is an np pair?
 
\section{Results}

\subsection{Problem A}

It is obvious that if $V$ is a holomorphic retract of $\O$ (which means there is a holomorphic 
function $r: \O \to V$ which equals the identity on $V$) then $f \mapsto f \circ r$ is a norm preserving homomorphism that gives extensions.
On the bidisk this turns out to be the only way that a reasonably nice set can have the norm-preserving extension property.
We say that $V$ is relatively polynomially convex in a domain $\O$ if $\overline{V}$ is polynomially convex and $\overline{V} \cap \O = V$.

\begin{thm}
(Theorem 1.20 from \cite{agmc_vn}) Let $V$ be a nonempty relatively polynomially convex subset of $\d^2$. Every polynomial has a norm preserving extension to $\d^2$ if and only if $V$ is a holomorphic retract of $\d^2$.
\end{thm}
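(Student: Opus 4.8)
The reverse implication is immediate and is exactly the remark preceding the statement: if $r:\d^2\to\d^2$ is a holomorphic retraction onto $V$ (so $r(\d^2)=V$ and $r|_V=\mathrm{id}$), then for any bounded holomorphic $f$ on $V$ the function $F:=f\circ r$ is holomorphic on $\d^2$, restricts to $f$, and satisfies $\sup_{\d^2}|F|=\sup_{r(\d^2)}|f|=\sup_V|f|$. So the entire content is the forward direction: assuming $(\d^2,V)$ is an np pair, I must manufacture such a retraction.

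The plan is first to extract metric rigidity. Write $c_X$ for the Caratheodory pseudodistance of a set $X$ and $\rho$ for the pseudohyperbolic distance on $\d$. Since every function holomorphic on $\d^2$ restricts to one on $V$, one has $c_{\d^2}\le c_V$ on $V$ for free. Conversely, given $p,q\in V$ and $g:V\to\d$ nearly realizing $c_V(p,q)$, I normalize so that $\sup_V|g|\le 1$ and apply the np property to get $G$ on $\d^2$ with $G|_V=g$ and $\sup_{\d^2}|G|\le 1$; then $\rho(g(p),g(q))=\rho(G(p),G(q))\le c_{\d^2}(p,q)$, and taking the supremum over $g$ forces $c_V=c_{\d^2}$. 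Thus $V$ is Caratheodory isometrically embedded in $\d^2$. This already yields that $V$ is connected (a disconnection would let one take holomorphic functions on $V$ with independent constant values on two components, pushing $\rho$ between the images to $1$ and making $c_V$ infinite, contradicting finiteness of $c_{\d^2}$), and it reduces matters to $\dim V=1$: being connected, $V$ is a single point when $\dim V=0$ and all of $\d^2$ when $\dim V=2$, both trivially retracts.

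It remains to show that a connected, relatively polynomially convex, one-dimensional, Caratheodory isometric analytic set $V\subset\d^2$ is a complex geodesic, for then Lempert's theory furnishes a holomorphic left inverse $\psi:\d^2\to\d$ for the uniformization $\varphi:\d\to V$, and $r:=\varphi\circ\psi$ is the retraction. The serious difficulty is that complex geodesics of the bidisk are \emph{not} unique: since $c_{\d^2}(p,q)=\max(\rho(p_1,q_1),\rho(p_2,q_2))$, the extremal functions are essentially the coordinate projections, which are far from injective, so pointwise isometry does not by itself single out a geodesic and $\varphi$ cannot be read off directly as in the strictly convex setting.

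To get past this I would invoke operator theory in the form of Ando's inequality: $F$ lies in the closed unit ball of $H^\infty(\d^2)$ if and only if there are positive semidefinite kernels $A,B$ with
\[
1-\overline{F(\mu)}F(\lambda)=(1-\overline{\mu_1}\lambda_1)\,A(\lambda,\mu)+(1-\overline{\mu_2}\lambda_2)\,B(\lambda,\mu).
\]
The np property means every contraction on $V$ has such a decomposition on $V\times V$ that extends to $\d^2$; feeding the near-extremal Caratheodory functions into this two-kernel splitting, the isometry should force one of the kernels to degenerate along $V$, exhibiting $V$ as the level set of a single inner-type function and producing $\varphi$ and $\psi$ as limits of the extremal data. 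I expect the main obstacle to be precisely this extraction: converting the operator-theoretic rigidity into an honest holomorphic disk together with a left inverse, and then using relative polynomial convexity to globalize the two-point information into a retraction whose range is all of $V$ and nothing more. The non-uniqueness of geodesics is exactly what makes the passage from local/pointwise rigidity to a single globally defined $r$ the crux of the argument.
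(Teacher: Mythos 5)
The reverse implication and the Carath\'eodory rigidity step are fine in spirit, but the theorem lives entirely in the forward direction, and your proposal stops exactly where that direction begins: your final paragraph is a research plan, not a proof, and you say so yourself (``I expect the main obstacle to be precisely this extraction''). Note that the paper under review does not prove this statement at all --- it quotes it from \cite{agmc_vn} --- and the actual proof there runs on a mechanism your sketch never reaches: the dichotomy between \emph{balanced} and \emph{unbalanced} pairs of points. Since $c_{\d^2}(p,q)=\max\bigl(\rho(p_1,q_1),\rho(p_2,q_2)\bigr)$, call a pair balanced when the two coordinates achieve the same pseudohyperbolic distance. Through a balanced pair the complex geodesic of $\d^2$ is \emph{unique} (a M\"obius graph $\{(z,m(z))\}$), while through an unbalanced pair geodesics are wildly non-unique. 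The argument in \cite{agmc_vn} splits accordingly: if $V$ contains a balanced pair, the extension property forces $V$ to contain the unique geodesic through that pair, and relative polynomial convexity then pins $V$ down as exactly that geodesic (a retract); if every pair in $V$ is unbalanced, one coordinate dominates everywhere and $V$ is shown to be a graph over one coordinate, hence again a retract. Your hope that the And\^o two-kernel decomposition ``should force one of the kernels to degenerate along $V$'' is not an argument, and in the unbalanced case nothing of the sort singles out a geodesic --- this is precisely why the case analysis is needed.

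There are two further genuine gaps. First, $V$ is hypothesized only to be a relatively polynomially convex \emph{subset} of $\d^2$, not an analytic set; your reduction ``to $\dim V=1$'' presupposes $V$ has a dimension. That $V$ is a variety at all (indeed a point, a graph, or all of $\d^2$) is part of the conclusion, not an assumption you may use. Second, the hypothesis supplies norm-preserving extensions of \emph{polynomials} only, whereas your rigidity argument extends an arbitrary near-extremal $g\in\ess(V)$; to make that step honest you must either run the argument with the polynomial Carath\'eodory distance (which suffices, since the coordinate functions are polynomials and give the lower bound $c_{\d^2}$) or first upgrade, by a normal-families argument as in \cite{agmc_vn}, from polynomial extensions to extensions of all bounded holomorphic functions on $V$.
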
 
Non-trivial retracts of $\d^2$ are of the form $\{(z, \phi(z)) : z \in \d\}$
or $\{( \phi(z),z) : z \in \d\}$ for some holomorphic $\phi: \d \to \d$ \cite{hs81},
so in particular if $V$ is a  relatively polynomially convex subset of $\d^2$
and $(\d^2, V)$ is norm preserving, then $V$ is a manifold.

Both these results hold more generally. In \cite{kmc19}, it was shown that if $(\O,V)$ is an np pair and $V$ is  relatively polynomially convex, then $V$ must be a retract if any of the following hold:
\begin{itemize}
\item
$\O$ is a ball in any dimension;
\item
$\O$ is strictly convex in $\c^2$;
\item
 $\O$ is strongly linearly convex in $\c^2$ with $C^3$ boundary.
\end{itemize}
It was also proved that $V$ must at least be a  complex submanifold if either
$\O$ is the tridisk \cite{kmc20} or is strongly linearly convex  with $C^3$ boundary in any dimension.

It came as a surprise, then, when the first author, together with Z. Lykova and
N. Young, solved Problem A for the case that $\O$ is the symmetrized bidisk and $V$
an algebraic subset \cite{aly19}; see also \cite{bhsa18}. It was shown that some special $V$'s arise that are not
only not retracts, but are not even submanifolds. This result spurred the current investigation.

\subsection{Problem B}

In Section \ref{sec3} we show that the geometry of $V$ has very little to do with whether
there is an np pair $(G,V)$. 

{\bf Theorem \ref{exist.thm.20}.}
If $(\Omega,V)$ is a Cartan pair, then there exists $G \subseteq \Omega$ such that $(G,V)$ is a norm preserving Cartan pair if and only if $V$ is connected.

\vskip 5pt
In Section \ref{secd} we study the situation for the set $T = \d \times \{0 \} \cup \{ 0 \} \times \d$
in $\c^2$, consisting of two crossed disks.
We prove:

{\bf Theorem \ref{thm44}.}
Let $G$ be a balanced domain of holomorphy in $\c^2$ with $T\subseteq G$. $(G,T)$ is norm preserving if and only if $G \subseteq  \Delta = \set{\lambda}{\ |\lambda_1|+|\lambda_2|<1}$.

\vskip 5pt

In Section \ref{seclinear}, we show that though $(\Delta, T)$ is np, there is no linear isometric extension.

Without the hypothesis of balanced, the description of all np pairs $(G,T)$ is more complicated.

{\bf  Theorem \ref{thm45}.}
Let $G$ be a domain in $\c^2$. Then
$(G,T)$ is an np pair if and only if $T$ is a relatively closed subset of $G$ and there exist a pseudoconvex set $U$ in $\c^2$ and a  function $\tau \mapsto C_\tau$ from $\t^2$ into $\hol(U)$ such that
\begin{equation}
\label{eq46}
G= \bigcap_{\tau \in \t^2} \set{\lambda \in U\ }{\ \ |\tau \cdot \lambda +\lambda_1\lambda_2 C_\tau(\lambda)|<1}.
\end{equation}

\subsection{Problem C}
In Sections \ref{sec7} and \ref{sec8}, we study the set 
\be
\label{eq23}
\calv \ =\ \set{z \in \d^3}{z_3^2 =z_1z_2} .
\ee
We show how one is naturally led to consider the domain
\be
\label{eq24}
\calg \ = \
  \set{z \in \d^3}{|z_1 z_2 - z_3^2| < (1-|z_3|^2) + \sqrt{1-|z_1|^2}\sqrt{1-|z_2|^2}} .
\ee

{\bf Theorems \ref{thm814} and \ref{thm828}.}
Let $\calv$ be given by \eqref{eq23}, and $\calg$ be  defined by \eqref{eq24}. Then 
$\calg$ is convex, and 
 $(\calg,\calv)$ is an np pair. Moreover, if $G$ is any balanced set in $\c^3$, then
$(G, \calv)$ is an np pair if and only if $G \subseteq \calg$.

\vskip 5pt

Our construction depends on writing down a $2$-to-$1$ branched cover of $\calv$ by $\d^2$,
and developing a model for even bounded holomorphic functions on $\d^2$ in Section \ref{sec6}.

\subsection{Noncommutative np extensions}

In Section \ref{sec9} we study np extensions in the free setting, and 
prove in Theorems~\ref{free.thm.40} and \ref{free.thm.50} that in some generality,
norm preserving extensions always exist, but the norm that is preserved is not
the supremum of $|f|$ evaluated on points in $\c^d$, but rather the supremum of $\| f \|$ evaluated
on $d$-tuples of matrices. 

This allows us to prove a version of Cartan's Theorem \ref{thm11}.
We restrict $\O$ to be an operhedron (a slight generalization of a poynomial polyhedron in $\c^n$) and
$V$ to be an algebraic subset, but we gain norm estimates: we show we can always extend a function
$f$ on $V$ to a function $F$ on $\O$ with the {\em same norm}, where again the norms are
given by taking the supremum of $\| f(x) \|$  (resp. $\| F (y) \|$) where $x$ ranges over 
certain $d$-tuples of matrices associated with $V$, and $y$ ranges over $n$-tuples associated with $\O$.
See Theorem~\ref{free.thm.60} for an exact statement.

Although the proof of Theorem~\ref{free.thm.60} uses non-commutative function theory, the statement
is just a theorem about extending holomorphic functions from analytic sets to domains in $\c^n$.

\section{Existence of Norm Preserving Pairs}
\label{sec3}
Rudin \cite{rud69} was the first to exploit the following theorem, essentially a consequence of the Michael Selection Theorem \cite{mic56}, to study extensions of holomorphic functions defined on subvarieties on polydiscs.
\begin{thm}\label{exist.thm.10}
If $X$ and $Y$ are Frechet spaces and $L:X \to Y$ is a continuous linear surjection, then there exists a continuous function $S:Y \to X$ satisfying $L(S(y))=y$ for all $y\in Y$.
\end{thm}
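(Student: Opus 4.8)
The plan is to realize $S$ as a continuous selection of the set-valued inverse of $L$, and to invoke the Michael Selection Theorem. Define $\Phi : Y \to 2^X$ by $\Phi(y) = L^{-1}(\{y\}) = \{x \in X : Lx = y\}$. Because $L$ is surjective each $\Phi(y)$ is nonempty; because $L$ is continuous each $\Phi(y)$ is closed; and because $\Phi(y)$ is a coset of the linear subspace $\ker L$, it is convex. Recall that the Michael Selection Theorem asserts that a lower semicontinuous map from a paracompact space into the nonempty closed convex subsets of a Fr\'echet space admits a continuous selection, i.e. a continuous $S$ with $S(y) \in \Phi(y)$ for every $y$; such an $S$ satisfies precisely $L(S(y)) = y$. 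So it remains to verify the two hypotheses: that $Y$ is paracompact and that $\Phi$ is lower semicontinuous.

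The first is immediate: a Fr\'echet space is metrizable, and every metric space is paracompact. The second---lower semicontinuity of $\Phi$---is the crux, and it is exactly here that the completeness of $X$ and $Y$ enters. Lower semicontinuity means that for every open $O \subseteq X$ the set $\{y \in Y : \Phi(y) \cap O \neq \varnothing\}$ is open. Suppose $\Phi(y_0) \cap O \neq \varnothing$, say $Lx_0 = y_0$ with $x_0 \in O$, and choose an open neighborhood $W$ of $x_0$ with $W \subseteq O$. Since $L$ is a continuous linear surjection between Fr\'echet spaces, the Open Mapping Theorem guarantees that $L$ is open, so $L(W)$ is an open neighborhood of $y_0$; and for every $y \in L(W)$ there is $x \in W \subseteq O$ with $Lx = y$, whence $\Phi(y) \cap O \neq \varnothing$. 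Thus $L(W)$ witnesses the required openness.

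With both hypotheses in hand, Michael's theorem delivers the continuous selection $S$, completing the argument. The one step that genuinely carries weight is the lower semicontinuity, and I expect the Open Mapping Theorem to be the only nontrivial input: without the openness of $L$ one cannot propagate a point of $\Phi(y_0) \cap O$ to nearby fibers, and the algebraic facts (nonemptiness, closedness, convexity of the fibers) are routine. One caveat worth flagging is that the most frequently cited form of the Michael Selection Theorem is stated for Banach-space targets; I would either cite the Fr\'echet-space version directly or reduce to it by exhausting the Fr\'echet topology via an increasing sequence of seminorms and selecting compatibly, but no genuinely new idea is needed beyond the Banach case.
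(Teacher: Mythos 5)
Your proof is correct and takes essentially the same approach as the paper: the paper's entire proof is a citation of the Michael Selection Theorem (with a pointer to Rudin's Theorem 7.2.3 as an alternative), and your argument simply fills in the details of that deduction --- nonempty, closed, convex fibers as cosets of $\ker L$, paracompactness of the metrizable space $Y$, and lower semicontinuity of $y \mapsto L^{-1}(\{y\})$ via the Open Mapping Theorem. Your caveat about the Banach-space formulation is resolved by noting that Michael's original 1956 paper already covers Fr\'echet-space targets, so the direct citation you propose is available.
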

\begin{proof}
The theorem is a special case of the far more general Michael Selection Theorem \cite{mic56}. Alternately, for a brief, direct proof consult the proof of Theorem 7.2.3 in \cite{rud69}.
\end{proof}
If $\Omega$ is a domain we let $\hol(\Omega)$ denote the collection of holomorphic functions on $\Omega$, let $\hinf(\Omega)$ denote the collection of bounded holomorphic functions on $\Omega$, and let $\ess(\Omega)$ denote the Schur class of $\Omega$, i.e., the subset of $\hinf(\Omega)$ defined by
\[
\ess(\Omega)=\set{\phi \in \hol(\Omega)}{\sup_{\lambda \in \Omega} |\phi(\lambda)|\le 1}.
\]
We let $\herg(\O)$ denote the Herglotz class of $\O$, namely
\[
\herg(\Omega)=\set{\phi \in \hol(\Omega)} { \forall \lambda \in \Omega, \ \Re \phi (\lambda) \geq 0 }.
\]

In similar fashion, if $(\Omega,V)$ is a Cartan pair, then we let $\hol(V)$ denote the collection of holomorphic functions on $V$, let $\hinf(V)$ denote the collection of bounded holomorphic functions on $V$, and let $\ess(V)$  and $\herg(V)$ denote the Schur  and Herglotz classes of $V$, i.e., those subsets of  $\hol(V)$ \black defined by
\begin{eqnarray*}
\ess(V) &\ =\  & \set{\phi \in \hol(V)}{\sup_{\lambda \in V} |\phi(\lambda)|\le 1}\\
\herg(V) &= & \set{\phi \in \hol(\Omega)} { \forall \lambda \in V, \ \Re \phi (\lambda) \geq 0 }.
\end{eqnarray*}

It is well known and elementary that $\hol(\Omega)$ is a Frechet space when equipped with the topology of uniform convergence on compact subsets of $\Omega$. That $\hol(V)$ is a Frechet space as well, when so equipped, is a very deep result implicit in the original seminars of Cartan \cite{car51} (see Theorem~7.4.9 \cite{hor}). As consequence, we are able to apply Theorem \ref{exist.thm.10} to prove the following highly useful technical result.
\begin{lem}\label{exist.lem.10}
If $(\Omega,V)$ is a Cartan pair, then their exists a continuous function $S:\hol(V) \to \hol(\Omega)$ such that $S(f)|V =f$ for all $f \in \hol(V)$
\end{lem}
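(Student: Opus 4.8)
The plan is to realize $S$ as a continuous right inverse to the restriction map and then invoke Theorem~\ref{exist.thm.10} directly. Define $L \colon \hol(\Omega) \to \hol(V)$ by $L(F) = F\,|\,V$. The lemma will follow the instant I verify that $L$ is a continuous linear surjection between Fréchet spaces, since Theorem~\ref{exist.thm.10} then supplies a continuous $S \colon \hol(V) \to \hol(\Omega)$ with $L(S(f)) = f$, i.e. $S(f)\,|\,V = f$, for every $f$.

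So the work reduces to checking the three hypotheses of Theorem~\ref{exist.thm.10}. Linearity of $L$ is clear. For surjectivity I would appeal to the Cartan Extension Theorem~\ref{thm11}: by definition every $f \in \hol(V)$ is locally the restriction of a holomorphic germ, and Cartan's theorem upgrades this to a global $F \in \hol(\Omega)$ with $F\,|\,V = f$, which says precisely that $L$ is onto. That both $\hol(\Omega)$ and $\hol(V)$ are Fréchet spaces in the topology of uniform convergence on compact subsets is already recorded in the discussion preceding the statement (the former elementary, the latter the deep consequence of Cartan's coherence theory from \cite{car51, hor}).

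For continuity I would use that $V$ is a relatively closed subset of $\Omega$, so that every compact set $K \subseteq V$ is also a compact subset of $\Omega$. For such $K$ one has $\sup_{\lambda \in K} |L(F)(\lambda)| = \sup_{\lambda \in K} |F(\lambda)|$, so each seminorm defining the topology of $\hol(V)$ is dominated by the corresponding seminorm of $\hol(\Omega)$; hence $L$ is continuous.

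The substantive mathematical content here is entirely imported as black boxes: surjectivity of $L$ is Cartan's theorem, and the Fréchet structure of $\hol(V)$ is again Cartan's theory, so there is no genuine obstacle left in the argument itself. The one point I would take care over is the continuity verification, namely confirming that the topology on $\hol(V)$ is indeed governed by its compact subsets (which are compact in $\Omega$), so that $L$ is not merely an algebraic surjection but a continuous one; this is exactly what is needed to extract a continuous — rather than merely set-theoretic — section via the Michael-selection-type Theorem~\ref{exist.thm.10}.
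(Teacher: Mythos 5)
Your proposal is correct and follows essentially the same route as the paper: the paper's proof likewise defines the restriction map $L(F)=F\,|\,V$, cites Cartan's theorem for surjectivity and the Fr\'echet structure of $\hol(V)$, and invokes Theorem~\ref{exist.thm.10} to obtain the continuous section $S$. Your explicit check of continuity via compact subsets of $V$ being compact in $\Omega$ is a detail the paper leaves implicit, but it is the same argument.
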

\begin{proof}
By the Cartan Extension Theorem (Theorem 7.4.8 in \cite{hor}), the map $L$ defined by $L(F)=F|V$ is a continuous linear surjection from $\hol(\Omega)$ onto $\hol(V)$. Therefore, as $\hol(\Omega)$ and $\hol(V)$ are Frechet spaces, the desired continuous function $S$ exists by Theorem \ref{exist.thm.10}.
\end{proof}
Another useful lemma is the following. Recall that a TVS is said to be a Montel space if it is barrelled and the Heine-Borel Theorem is true, i.e., a set is compact if and only if it is closed and bounded.
\begin{lem}\label{exist.lem.20}
If $(\Omega,V)$ is a Cartan pair, then $\hol(V)$ is a Montel space.
\end{lem}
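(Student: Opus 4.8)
The plan is to separate the two ingredients in the definition of a Montel space. Since $\hol(V)$ is already known to be a Fréchet space, it is a Baire space and hence barrelled, so the only thing left to verify is the Heine--Borel property: every closed bounded subset of $\hol(V)$ is compact. Because $\hol(V)$ is metrizable, this is equivalent to showing that every bounded sequence $(f_k)$ in $\hol(V)$ has a subsequence converging, uniformly on compact subsets of $V$, to a function in $\hol(V)$ (closedness of the set then places the limit in it). Thus the lemma reduces to a normal families theorem for the analytic set $V$: a sequence of holomorphic functions on $V$ that is uniformly bounded on each compact subset of $V$ is a normal family.

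I would stress at the outset why this cannot simply be inherited from the classical Montel theorem for $\hol(\Omega)$ through the restriction map $L\colon\hol(\Omega)\to\hol(V)$, which by the Cartan Extension Theorem is a continuous linear surjection of Fréchet spaces and hence open. The Montel property is not preserved under Fréchet quotients in general, and there is no reason a bounded sequence $(f_k)$ should lift to a bounded sequence in $\hol(\Omega)$; if it did, relative compactness would follow at once from the classical Montel theorem and continuity of $L$. The function theory of $V$ must therefore be used directly.

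To prove the normal families statement I would localize. Fix $\lambda_0\in V$ and apply the local parametrization (Weierstrass) theorem to obtain coordinates and a polydisc $P=P'\times P''\subseteq\c^k\times\c^{\,n-k}$ on which the projection $\pi\colon V\cap P\to P'$ is a proper finite branched covering of some degree $b$, unbranched over the complement of a thin analytic hypersurface $A\subset P'$. Given $f\in\hol(V)$ with $\sup_{V\cap\overline P}|f|\le M$, the $b$ local branches of $f$ over $P'\setminus A$ have elementary symmetric functions $\sigma_1,\dots,\sigma_b$ that are single valued and holomorphic on $P'\setminus A$ and bounded there by constants depending only on $b$ and $M$; since $A$ is analytic of codimension one, Riemann's removable singularity theorem extends each $\sigma_j$ to an element of $\hol(P')$ with the same bound. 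A bounded sequence $(f_k)$ therefore produces, for each $j$, a bounded sequence $(\sigma_j^{(k)})$ in $\hol(P')$, and the classical Montel theorem on $P'$, together with a diagonal argument over $j=1,\dots,b$, yields a subsequence along which every $\sigma_j^{(k)}$ converges in $\hol(P')$.

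Finally I would recover convergence of the functions themselves. The values of $f_k$ on the fibre over $z'$ are exactly the roots of $T^b-\sigma_1^{(k)}(z')T^{b-1}+\cdots\pm\sigma_b^{(k)}(z')$, so convergence of the coefficients forces convergence of the unordered root sets; over each simply connected piece of $P'\setminus A$ the branches separate and match up holomorphically, giving local uniform convergence of $f_k$ on $V\cap P$ away from $\pi^{-1}(A)$, while the uniform bound and the continuity of the $f_k$ propagate the convergence across the branch locus. A countable exhaustion of $V$ by such local pieces and a further diagonal argument then produce a single subsequence converging uniformly on every compact subset of $V$, and the limit, being continuous on $V$, holomorphic on the regular part, and locally bounded, lies in $\hol(V)$. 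The main obstacle is precisely this last step at the singular and branch points: passing from convergence of the symmetric functions to convergence of the $f_k$ as functions on $V$, and verifying holomorphy of the limit across $\pi^{-1}(A)$ and at the singular locus, is where the geometry of the analytic set, rather than soft functional analysis, does the work.
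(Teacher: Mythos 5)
Your reduction is fine (Fr\'echet $\Rightarrow$ barrelled, metrizability $\Rightarrow$ the sequential form of Heine--Borel), but the function-theoretic core of your argument has two genuine gaps. The first is the decisive step: passing from convergence of the symmetric functions $\sigma_j^{(k)}$ on $P'$ and convergence of $f_k$ away from $\pi^{-1}(A)$ to uniform convergence on compact subsets of $V\cap P$ \emph{across} the branch locus. This is asserted (``the uniform bound and the continuity of the $f_k$ propagate the convergence''), not proved, and the symmetric functions cannot carry that weight: on $V=\{z_2^2=z_1^2\}\subset\c^2$, with $\pi(z)=z_1$, the two functions $z_2|_V$ and $-z_2|_V$ belong to $\hol(V)$ and have identical $\sigma_1,\sigma_2$, so closeness of the $\sigma_j^{(k)}$ gives no pointwise control whatever on $f_k-f_l$. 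Away from the branch locus you do not need the $\sigma_j$ at all (sheet-by-sheet Montel suffices), and at the branch locus they are insufficient; so the entire difficulty of the lemma is concentrated in the one sentence you did not prove, and it needs a real input (equicontinuity near singular points, obtained e.g.\ from local extension with bounds as described below, or a maximum principle on $V$ with induction on the dimension of the singular locus). The second gap is your criterion for the limit to lie in $\hol(V)$: ``continuous on $V$, holomorphic on the regular part, locally bounded'' is \emph{false} as a test for membership in $\hol(V)$ when $V$ is not normal. On the cusp $V=\{z_2^2=z_1^3\}$, parametrized by $t\mapsto(t^2,t^3)$, the function $z_2/z_1=t$ has all three properties near $0$ but is not the restriction of any function holomorphic in a neighborhood of $0$, hence is not holomorphic on $V$ in this paper's sense. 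The repair is the completeness of $\hol(V)$ (the Fr\'echet property you granted yourself at the outset): it suffices that your subsequence be uniformly Cauchy on compacts of $V$ --- but establishing that is exactly the unproved propagation step again.

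For comparison, the paper's proof is precisely the soft argument you declared unusable: by Cartan's theorem, $\hol(V)$ is the separated quotient of $\hol(\O)$ by the closed subspace of functions vanishing on $V$, and the paper then cites Taylor for permanence of the Montel property under separated quotients. Your objection is half right: bare Montel-ness is indeed not preserved by Fr\'echet quotients in general (K\"othe--Grothendieck: there are Fr\'echet--Montel spaces with quotient isomorphic to $\ell^1$), but $\hol(\O)$ is not merely Montel --- it is nuclear, equivalently Fr\'echet--Schwartz, and those properties \emph{do} pass to separated quotients and imply Montel; this is what makes the three-line proof legitimate. Relatedly, your claim that bounded sequences in $\hol(V)$ need not lift to bounded sequences in $\hol(\O)$ is true globally but misleading locally: for a polydisc $U$ the restriction map $\hol(U)\to\hol(V\cap U)$ is a continuous linear surjection of Fr\'echet spaces, hence open, and openness yields a compact $K'\subset V\cap U$ and a constant $C$ such that every $f\in\hol(V\cap U)$ admits an extension $F\in\hol(U)$ with $\sup_L|F|\le C\sup_{K'}|f|$ on a prescribed compact $L$. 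Local bounded extensions, classical Montel upstairs, a diagonal argument over a countable cover of $V$, and completeness of $\hol(V)$ then give a correct, short version of the direct proof you were attempting, with no branched covers or symmetric functions at all.
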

\begin{proof}
By \cite[Thm. 2.4.2]{tay}, $\hol(\O)$ is a Montel space. By Cartan's theorem, $\hol(V)$ is the quotient of $\hol(\O)$ by the 
closed subspace of functions vanishing on $V$, and so is a separated quotient.
By \cite[Prop. 11.1.1]{tay} a 
 separated quotient of a Montel space is Montel.
\end{proof}
For a final lemma we have the following result.
\begin{lem}\label{exist.lem.30}
Let $(\Omega,V)$ be a Cartan pair, and assume $V$ is connected.  Fix a point $\lambda_0 \in V$, and let
\[
\ess_{\lambda_0} (V) =\set{f \in \ess(V)}{f(\lambda_0)=0}.
\]
Then $(\Omega,V)$ is norm preserving if and only if each $f \in \ess_{\lambda_0}(V)$ has an extension to an element $F \in \ess(\Omega)$.
\end{lem}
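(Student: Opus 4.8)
The plan is to prove the two implications separately, with the forward direction being essentially immediate and the reverse direction carrying all the content. For the forward direction, I would simply observe that if $(\Omega,V)$ is norm preserving and $f \in \ess_{\lambda_0}(V)$, then $f \in \hinf(V)$ with $\sup_V|f|\le 1$, so the norm preserving property supplies an extension $F$ with $\sup_\Omega|F| = \sup_V|f| \le 1$; thus $F \in \ess(\Omega)$. Note that this direction uses neither the constraint $f(\lambda_0)=0$ nor connectedness. The real content of the lemma is the converse: that testing the Schur-class extension property only on functions vanishing at a single prescribed point $\lambda_0$ already forces the full norm preserving property.

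For the reverse direction, assume every $f \in \ess_{\lambda_0}(V)$ extends to some $F \in \ess(\Omega)$, and let $f \in \hinf(V)$ be arbitrary. After rescaling I may assume $\sup_V|f| = 1$ (the zero function extends by zero). Write $a = f(\lambda_0)$, so $|a|\le 1$. The key device is the disk automorphism $m_a(z) = (z-a)/(1-\bar a z)$, which carries $\overline{\d}$ onto itself and sends $a \mapsto 0$. When $|a|<1$, the composite $g := m_a\circ f$ lies in $\ess_{\lambda_0}(V)$, so the hypothesis produces $G \in \ess(\Omega)$ extending $g$. I then set $F := m_a^{-1}\circ G$, where $m_a^{-1}(w) = (w+a)/(1+\bar a w)$; this is holomorphic on $\Omega$ because the pole of $m_a^{-1}$ lies at modulus $1/|a| > 1$, hence away from $G(\Omega)\subseteq\overline{\d}$. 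Since $m_a^{-1}$ preserves $\overline{\d}$ we get $\sup_\Omega|F|\le 1$, while $F|V = m_a^{-1}\circ m_a\circ f = f$; and because $V\subseteq\Omega$ forces $\sup_\Omega|F|\ge \sup_V|f| = 1$, we conclude $\sup_\Omega|F| = \sup_V|f|$, which is precisely a norm preserving extension of $f$.

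The one place where connectedness enters, and the step I expect to be the main subtlety, is the boundary case $|a|=1$, in which the Möbius reduction degenerates. Here $|f|$ attains its supremum $1$ at the interior point $\lambda_0$, and I would invoke the maximum modulus principle on the connected analytic variety $V$ to conclude that $f$ is the constant $a$, whence the constant extension $F\equiv a$ is trivially norm preserving. The care required is that $V$ need not be a manifold; however, since $V$ is locally a common zero set of holomorphic functions, the maximum principle for holomorphic functions on connected reduced analytic sets applies (near regular points this is the classical statement, and the conclusion propagates across $V$ by connectedness together with density of the regular locus). Granting this standard fact, assembling the three cases $f\equiv 0$, $|a|<1$, and $|a|=1$ completes the proof.
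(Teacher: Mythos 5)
Your proof is correct and follows essentially the same route as the paper's: reduce to the case $|f(\lambda_0)|<1$ via the M\"obius map $m_{f(\lambda_0)}$, pull the Schur-class extension back through $m_{f(\lambda_0)}^{-1}$, and dispose of the boundary case $|f(\lambda_0)|=1$ with the maximum principle on the connected analytic set $V$. The only difference is that you spell out details the paper leaves implicit --- the trivial forward implication, the rescaling needed to pass from Schur-class extensions to exact norm equality, and the holomorphy of $m_a^{-1}\circ G$ --- which is fine.
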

\begin{proof}
Let $f \in \ess(V)$. If $|f(\lambda_0)|=1$, then by the Maximum Principle (\cite{gur} Theorem 16, Chapter III) $f$ is constant on $V$ and trivially has a norm preserving extension to $\Omega$. Otherwise, $|f(\lambda_0)|<1$ and there exists a M\"obius transformation $m$ of $\d$ such that $(m \circ f) (\lambda_0)=0$. As $h=m \circ f \in \ess(V)$, the assumption of the lemma implies that there exists $H \in \ess(\Omega)$ such that $H|V=h$. But then if we define $F=m^{-1} \circ H$, $F \in \ess(\Omega)$ and $F|V =f$.
\end{proof}
\begin{thm}\label{exist.thm.20}
If $(\Omega,V)$ is a Cartan pair, then there exists $G \subseteq \Omega$ such that $(G,V)$ is a norm preserving Cartan pair if and only if $V$ is connected.
\end{thm}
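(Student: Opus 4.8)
The plan is to prove both implications, the forward one being a short maximum-principle argument and the reverse one the substantive construction.

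For the direction ``an np pair exists $\Rightarrow$ $V$ is connected'' I argue by contraposition. Suppose $V = A \sqcup B$ with $A,B$ nonempty and relatively clopen in $V$, and let $G \subseteq \Omega$ be any domain of holomorphy containing $V$. The function $f$ equal to $0$ on $A$ and $1$ on $B$ is locally constant, hence bounded and holomorphic on $V$ (near a point of $A$ it extends by $F\equiv 0$, near a point of $B$ by $F\equiv 1$), with $\sup_V|f|=1$. If some $F\in\hol(G)$ extended $f$ with $\sup_G|F|=1$, then at any $b\in B$ we would have $|F(b)|=1=\sup_G|F|$, so $F$ attains its maximum modulus at an interior point of the connected set $G$; the maximum principle would force $F\equiv 1$, contradicting $F(a)=0$ for $a\in A$. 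Thus no such $G$ can be norm preserving with $V$.

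For the reverse direction, fix $\lambda_0\in V$ and consider $\ess_{\lambda_0}(V)$ as in Lemma~\ref{exist.lem.30}. First I would observe that $\ess_{\lambda_0}(V)$ is a \emph{compact} subset of $\hol(V)$: it is bounded (every member has supremum norm $\le 1$) and closed (uniform-on-compacta limits preserve both the bound and the vanishing at $\lambda_0$), so compactness follows from Lemma~\ref{exist.lem.20}. Pushing this through the continuous section $S$ of Lemma~\ref{exist.lem.10}, the family $\mathcal{F}=\{\,S(f):f\in\ess_{\lambda_0}(V)\,\}$ is a compact subset of $\hol(\Omega)$, i.e.\ a normal family of extensions, uniformly bounded on compact subsets of $\Omega$. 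I then define
\[
\phi(\lambda)\ =\ \sup_{g\in\mathcal{F}}\,|g(\lambda)|,\qquad \lambda\in\Omega .
\]
Because $\mathcal{F}$ is compact (hence locally equicontinuous, with the supremum attained at each point), $\phi$ is continuous, and as a locally bounded supremum of the plurisubharmonic functions $|g|$ it is plurisubharmonic. I take $G$ to be the connected component containing $V$ of the sublevel set $\{\phi<1\}$; as a sublevel set of a continuous plurisubharmonic function it is pseudoconvex, hence a domain of holomorphy, and $V$ (being relatively closed and analytic in $\Omega$) remains a relatively closed analytic subset of the open set $G$, so $(G,V)$ is a Cartan pair. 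Since $V$ is connected it lies in a single component, so $G$ is well defined.

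The crux is to verify $V\subseteq\{\phi<1\}$, and this is where compactness earns its keep. On $V$ we have $S(f)|V=f$, so $\phi(\lambda)=\max_{f\in\ess_{\lambda_0}(V)}|f(\lambda)|$ with the maximum \emph{attained}. If $\phi(\lambda)=1$ at some $\lambda\in V$, there would be a genuine $f\in\ess(V)$ with $f(\lambda_0)=0$ and $|f(\lambda)|=1=\sup_V|f|$; by the maximum principle on the connected analytic set $V$ this $f$ would be a unimodular constant, contradicting $f(\lambda_0)=0$. Hence $\phi<1$ on $V$ and $V\subseteq G$. Finally, for each $f\in\ess_{\lambda_0}(V)$ the extension $S(f)\in\mathcal{F}$ satisfies $|S(f)(\lambda)|\le\phi(\lambda)<1$ for all $\lambda\in G$, so $S(f)|G\in\ess(G)$ and restricts to $f$ on $V$; Lemma~\ref{exist.lem.30} applied to $(G,V)$ then yields that $(G,V)$ is norm preserving. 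I expect the main obstacle to be exactly the strict inequality $\phi<1$ on $V$: it would fail for the raw supremum were it not attained, and the attainment that lets the maximum principle bite is precisely what the Montel/compactness of $\ess_{\lambda_0}(V)$ together with the continuity of $S$ are designed to supply.
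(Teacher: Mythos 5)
Your proof is correct and takes essentially the same route as the paper: the necessity half is the paper's characteristic-function/maximum-principle argument, and the sufficiency half uses the same three lemmas (the continuous section $S$ of Lemma~\ref{exist.lem.10}, Montel compactness of $\ess_{\lambda_0}(V)$ from Lemma~\ref{exist.lem.20}, and the reduction Lemma~\ref{exist.lem.30}), while your set $\{\phi<1\}$ coincides exactly with the paper's $\bigl(\bigcap_{f}\Omega_f\bigr)^\circ$ --- compactness makes the supremum defining $\phi$ attained and $\phi$ continuous, so the intersection is already open, and the paper's sequential subsequence argument for $V\subseteq G$ becomes your pointwise maximum-principle argument on $V$. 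The one point you should tighten is the pseudoconvexity of $G$: a sublevel set of a plurisubharmonic function is pseudoconvex only when the ambient domain is itself pseudoconvex (otherwise taking $\phi\equiv 0$ would make every domain pseudoconvex), so you must invoke that $\Omega$, being a domain of holomorphy, is pseudoconvex; with that observation your construction is complete.
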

\begin{proof}
If $V$ is not connected, there is a non-trivial characteristic function in $\hinf(V)$. 
By the maximum principle, any extension of this to a pseudoconvex domain $G$ must have a larger
norm.

Conversely, assume $V$ is connected.
Fix $\lambda_0 \in V$ and let $S$ be as in Lemma \ref{exist.lem.10}. For each $f\in \ess_{\lambda_0}(V)$ define $\Omega_f$ in $\Omega$ by
\[
\Omega_f=\set{\lambda \in \Omega}{|S(f)(\lambda)|<1}
\]
and let $G$ be the connected component of 
\[
\Big(\bigcap_{f\in \ess_{\lambda_0}(V)}\Omega_f\Big)^\circ.
\]
containing $V$.

We first observe that, as every connected component of each  $\Omega_f$ is a domain of holomorphy
\cite[Prop. 4.1.7]{jj20},  it follows that $G$ is a domain of holomorphy  \cite[Cor. II.3.19]{ran}. 
Hence, $(G,V)$ will be a Cartan pair if
$V \subseteq G$. Assume to the contrary that $\mu \in V \setminus G$. Then for each positive integer $n$, there exist $\mu_n \in B(\mu,1/n)\cap \Omega$ and $f_n \in \ess_{\lambda_0}(V)$ with $|S(f_n)(\mu_n)| \ge 1$. As Lemma \ref{exist.lem.20} implies that $\ess_{\lambda_0}(V)$ is a compact subset of $\hol(V)$, there exists a subsequence $\{f_{n_k}\}$ and $f\in \ess_{\lambda_0}(V)$ such that $f_{n_k} \to f$ in $\hol(V)$ as $k \to \infty$. It follows by the continuity of $S$, that $|f(\mu)|\ge 1$. But as $f \in \ess_{\lambda_0}$, this contradicts the Maximum Principle.

Now observe that by construction,
\[
f\in \ess_{\lambda_0}(V) \implies \sup_{\lambda \in G}|Sf(\lambda)|\le 1.
\]
Hence, if $f\in \ess_{\lambda_0}(V)$ and
\[
\sup_{\lambda \in V}|f(\lambda)|=\rho \not=0,
\]
then, as $\rho^{-1}f \in \ess_{\lambda_0}(V)$,
\[
\sup_{\lambda \in G}|S(\rho^{-1}f)(\lambda)|\le 1,
\]
which implies that $\rho S(\rho^{-1}f)$ is a norm preserving extension of $f$ to $G$. Consequently, Lemma \ref{exist.lem.30} implies that $(G,V)$ is norm preserving.
\end{proof}

\begin{remark} Some authors, such as H\"ormander \cite{hor}, do not require that a domain of holomorphy be connected; in this case the connectivity of $V$ can then also be dropped.
Indeed, we just need to find disjoint pseudoconvex subdomains of $\O$ each of which contains at most one
component of $V$. To do this, let $\{ V_j : j  = 1, \dots \}$ be an enumeration of the at most countably many components of $V$. Let $f$ be the function that is $0$ on $V_1$ and $1$ on
each other $V_j$.
Since $(\O,V)$ is a Cartan pair, there is a holomorphic $F$ on $\O$ extending $f$.
Let $\O_1 = \{ \l \in \O : |F(\l) | < 1/2 \}$. Let $D = \{ \l \in \O : |F(\l) - 1 | < 1/2 \}$.
Apply Theorem~\ref{exist.thm.20} to $(\O_1, V_1)$ to get a norm preserving pair $(G_1, V_1)$
with $G_1 \subseteq \O_1$. Now replace $(\O,V)$ with $(D, \cup_{j \geq 2} V_j)$ and continue by induction.
\end{remark}

\section{Problems B and C for Two Crossed Discs}
\label{secd}

In this section we let $D_1$ and $D_2$ be the two sets in $\c^2$ defined by
\[
D_1=\set{(z,0)}{z \in \d}\qquad \text{ and }\qquad D_2 =\set{(0,z)}{z\in\d}
\]
and define an analytic set $T$ in $\c^2$ by
\[
T = D_1 \cup D_2.
\]
It is easy to construct domains of holomorphy $\Omega$ in $\c^2$ such that $(\Omega,T)$ is a Cartan pair. For example, as
\[
T=\set{\lambda \in \d^2}{\lambda_1\lambda_2=0},
\]
$(\d^2,T)$ is a Cartan pair. In this section we shall identify conditions which will guarantee that $(\Omega,T)$ is a norm preserving Cartan pair.

\begin{lem}
\label{lem41}
A function $f$ is holomorphic on $T$ if and only if there exist holomorphic functions $f_1$ and $f_2$ defined on $\d$ such that $f_1(0)=f_2(0)$ and $f$ is given by the formula
\begin{equation}
\label{eq41}
  f(\lambda) =
\left\{
	\begin{array}{ll}
		f_1(z)  & \mbox{if } \lambda=(z,0)\in D_1 \\ \\
		f_2(z) & \mbox{if } \lambda=(0,z)\in D_2.
	\end{array}
\right.
\end{equation}
\end{lem}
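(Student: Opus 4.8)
The plan is to prove both directions of the equivalence. The set $T$ is defined as $T = D_1 \cup D_2$ where $D_1$ and $D_2$ are the two coordinate disks meeting at the origin. A holomorphic function on $T$ should restrict to holomorphic functions on each disk, with a matching condition at the shared point $(0,0)$.

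\medskip

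\noindent\textbf{Forward direction.} Suppose $f$ is holomorphic on $T$ in the sense of the definition given earlier (for each $\lambda \in T$ there is an open $U \subseteq \c^2$ containing $\lambda$ and a holomorphic $F$ on $U$ with $F = f$ on $T \cap U$). First I would define $f_1$ and $f_2$ as the restrictions of $f$ to the two disks, i.e. $f_1(z) = f(z,0)$ for $z \in \d$ and $f_2(z) = f(0,z)$ for $z \in \d$. The task is to check these are holomorphic on $\d$. Fix $z_0 \in \d$ and consider the point $\lambda_0 = (z_0, 0) \in D_1$; by hypothesis there is a neighborhood $U$ of $\lambda_0$ and a holomorphic $F$ on $U$ agreeing with $f$ on $T \cap U$. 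Then $z \mapsto F(z,0)$ is holomorphic on a neighborhood of $z_0$ in $\d$ and equals $f_1$ there, so $f_1$ is holomorphic near $z_0$; since $z_0$ was arbitrary, $f_1 \in \hol(\d)$, and symmetrically $f_2 \in \hol(\d)$. The compatibility $f_1(0) = f(0,0) = f_2(0)$ is immediate since both disks contain the origin and $f$ is single-valued there. The representation \eqref{eq41} then holds by construction.

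\medskip

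\noindent\textbf{Converse direction.} Suppose we are given holomorphic $f_1, f_2$ on $\d$ with $f_1(0) = f_2(0)$, and define $f$ by \eqref{eq41}. This is well-defined precisely because the two formulas agree at the only overlap point $(0,0)$, using $f_1(0)=f_2(0)$. To verify $f$ is holomorphic on $T$, I would check the local extension condition at each point of $T$. For a point $(z_0,0) \in D_1$ with $z_0 \neq 0$, one can take a small polydisk neighborhood $U$ avoiding $D_2$ and set $F(\lambda_1, \lambda_2) = f_1(\lambda_1)$; since $T \cap U \subseteq D_1$, this agrees with $f$ on $T \cap U$. The symmetric construction works for nonzero points of $D_2$. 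The only delicate point is the origin: here a neighborhood $U$ of $(0,0)$ meets both disks, so the local extension $F$ must agree with $f_1$ along $\lambda_2 = 0$ and with $f_2$ along $\lambda_1 = 0$ simultaneously. The natural candidate is
\[
F(\lambda_1,\lambda_2) = f_1(\lambda_1) + f_2(\lambda_2) - f_1(0),
\]
which is holomorphic on $U = \d \times \d$ and restricts correctly: on $D_1$ it gives $f_1(\lambda_1) + f_2(0) - f_1(0) = f_1(\lambda_1)$, and on $D_2$ it gives $f_1(0) + f_2(\lambda_2) - f_1(0) = f_2(\lambda_2)$, using $f_1(0) = f_2(0)$ in the first computation. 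This confirms $f$ is holomorphic on $T$.

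\medskip

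\noindent\textbf{Main obstacle.} The genuine content is entirely concentrated at the origin, where the two disks cross; away from the crossing point the result is trivial because each disk sits locally as a coordinate axis inside a polydisk on which the function extends by ignoring the other variable. The compatibility condition $f_1(0) = f_2(0)$ is exactly what makes the problem nontrivial and is both necessary (single-valuedness of $f$ at the origin) and sufficient (it permits the additive gluing $F = f_1(\lambda_1) + f_2(\lambda_2) - f_1(0)$ to be consistent). I expect essentially no technical difficulty beyond correctly exhibiting this local extension and checking well-definedness.
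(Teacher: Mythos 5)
Your proof is correct and uses the same key idea as the paper: the paper's (very brief) proof rests exactly on the additive gluing formula $F(z,w) = f(z,0) + f(0,w) - f(0,0)$, which is identical to your extension $F(\lambda_1,\lambda_2) = f_1(\lambda_1) + f_2(\lambda_2) - f_1(0)$. You have simply written out in full the routine verifications the paper declares straightforward.
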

The proof of Lemma~\ref{lem41} is 
straightforward
 and follows from the fact that a function given by formula \eqref{eq41} can be extended to a neighborhood of $T$ (a possible extension to $\d^2$ is given by the formula $F(z,w) = f(z,0) + f(0,w) - f(0,0)$).

Let $\herg_{\lambda_0}(V)$ denote the holomorphic functions of positive real part that map
$\lambda_0$ to $1$.
For each $\tau \in \t^2$, we may define $b_\tau \in \ess_{(0,0)}(T)$ and $h_\tau \in \herg_{(0,0)} (T)$ by the formulas
\[
  b_\tau(\lambda) =
\left\{
	\begin{array}{ll}
		\tau_1 z  & \mbox{if } \lambda=(z,0)\in D_1 \\ \\
		\tau_2 z & \mbox{if } \lambda=(0,z)\in D_2
	\end{array}
\right.
\]
and
\[
  h_\tau(\lambda) =
\left\{
	\begin{array}{ll}
		\frac{1+\tau_1 z}{1-\tau_1 z}  & \mbox{if } \lambda=(z,0)\in D_1 \\ \\
		\frac{1+\tau_2 z}{1-\tau_2 z} & \mbox{if } \lambda=(0,z)\in D_2.
	\end{array}
\right.
\]
Since the map
\[
C: z \mapsto \frac{1 + z}{1-z}
\]
is a conformal map from the unit disk onto the right-half plane, it is immediate
that one can extend the  function $f$ from $\ess(T)$ to $\ess(G)$ if and only if
one can extend $C \circ f$ from $\herg(T)$ to $\herg(G)$.

\begin{lem}\label{lem42}
If $(G,T)$ is a Cartan pair, then $(G,T)$ is norm preserving if and only if for each $\tau \in \t^2$, there exists $B_\tau \in \ess(G)$ such that $B_\tau|T=b_\tau$.
\end{lem}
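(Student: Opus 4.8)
\emph{Proof plan.}

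The forward (``only if'') implication is immediate: since $\sup_{\lambda\in T}|b_\tau(\lambda)|=1$, any norm preserving extension $B_\tau$ of $b_\tau$ has $\sup_{\lambda\in G}|B_\tau(\lambda)|=1$, i.e.\ $B_\tau\in\ess(G)$. The substance is the converse, so I would assume that each $b_\tau$ extends to some $B_\tau\in\ess(G)$. By Lemma~\ref{exist.lem.30} it suffices to extend an arbitrary $f\in\ess_{(0,0)}(T)$ to an element of $\ess(G)$, and by the Cayley transform $C$ (as noted just before the statement) this is equivalent to extending $h:=C\circ f\in\herg_{(0,0)}(T)$ to an element of $\herg(G)$. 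The building blocks are the functions $h_\tau=C\circ b_\tau$, which by hypothesis extend to $H_\tau:=C\circ B_\tau\in\herg(G)$; here I would first note that, $G$ being a connected domain and $b_\tau$ non-constant, the maximum principle forces $|B_\tau|<1$ throughout $G$, so $H_\tau$ is a genuine pole-free Herglotz function.

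The key idea is to realize a general $h$ as an \emph{average} of the $h_\tau$. Writing $h=(h_1,h_2)$ as in Lemma~\ref{lem41}, each $h_j$ is holomorphic on $\d$ with $\Re h_j\ge 0$ and $h_j(0)=1$, so the Herglotz representation supplies probability measures $\nu_1,\nu_2$ on $\t$ with
\[
h_j(z)=\int_{\t}\frac{1+\tau z}{1-\tau z}\,d\nu_j(\tau),\qquad j=1,2.
\]
The point is that the integrand is exactly the restriction of $h_\tau$ to $D_j$. Forming the \emph{product} measure $\nu_1\times\nu_2$ on $\t^2$ decouples the two discs: setting, at least formally,
\[
H(\lambda)=\int_{\t^2}H_\tau(\lambda)\,d(\nu_1\times\nu_2)(\tau),
\]
one has $\Re H\ge 0$ on $G$, while on $D_1$ the $\tau_2$-integration contributes total mass $1$ and leaves $H(z,0)=\int_{\t}\frac{1+\tau_1 z}{1-\tau_1 z}\,d\nu_1(\tau_1)=h_1(z)$, and symmetrically $H(0,z)=h_2(z)$. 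Thus $H$ restricts to $h$ on $T$, and $F:=C^{-1}\circ H\in\ess(G)$ is the sought norm preserving extension of $f$.

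The hard part is to make the displayed integral a bona fide holomorphic function on $G$: the extensions $H_\tau$ are only asserted to exist, so there is no a priori measurable (let alone continuous) dependence on $\tau$ with which to justify a vector-valued integral in $\hol(G)$. I would circumvent this by first proving the statement for finitely supported $\nu_1,\nu_2$, where $H$ is a finite convex combination $\sum_{k,l}a_kb_l\,H_{(\tau_1^{(k)},\tau_2^{(l)})}$ and hence manifestly lies in $\herg(G)$ with the correct restriction to $T$. One then approximates arbitrary probability measures $\nu_1,\nu_2$ weak-$*$ by finitely supported ones, obtaining Herglotz extensions $H^{(n)}$ of functions $h^{(n)}$ that converge to $h$ locally uniformly on $T$. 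Since each $H^{(n)}$ has positive real part and fixed value at $(0,0)$, Harnack's inequality makes $\{H^{(n)}\}$ a normal family, so a subsequence converges locally uniformly on $G$ to a limit $H\in\herg(G)$ whose restriction to $T$ is $h$. Passing back through $C^{-1}$ then completes the argument.
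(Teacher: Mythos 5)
Your proof is correct, and its skeleton coincides with the paper's: reduce via Lemma~\ref{exist.lem.30} and the Cayley transform to extending an arbitrary $h\in\herg_{(0,0)}(T)$, write $h=(h_1,h_2)$ as in Lemma~\ref{lem41}, invoke the Herglotz representation, and average the extensions $H_\tau$ against the product measure --- including the correct identification of the crux, namely that no measurable dependence $\tau\mapsto H_\tau$ is available. Where you genuinely diverge is in how that crux is resolved. The paper keeps the measure $\mu=\mu_1\times\mu_2$ fixed and instead perturbs the integrand family to make it Borel: it writes $B_\tau(\lambda)=\tau\cdot\lambda+\lambda_1\lambda_2\Gamma_\tau(\lambda)$, picks a finite net $F_n\subset\t^2$ with a Borel nearest-point map $\alpha$, and replaces $\Gamma_\tau$ by $\Gamma_{\alpha(\tau)}$; this requires a uniform bound $\sup_\tau\|B_\tau\|_{K_n}<1$ on a compact exhaustion (obtained by a Montel-type argument) to guarantee the perturbed functions $B^{(n)}_\tau$ stay bounded by $1$ on $K_n$, after which one integrates and takes a normal-families limit. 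You instead keep the extensions intact and discretize the measures: for finitely supported $\nu_1,\nu_2$ the average is a finite convex combination of the $H_\tau$, where measurability is vacuous, and weak-$*$ approximation plus Harnack/Montel normality (anchored by $H^{(n)}(0,0)=1$) produces the limit extension. Your route is softer --- it needs neither the quantitative uniform estimate on compacta nor the explicit first-order decomposition of $B_\tau$ --- at the modest cost of an extra approximation layer on the measure side ($h^{(n)}\to h$ locally uniformly on $T$, which follows from weak-$*$ convergence and normality). Both arguments close the same way, so this is a legitimate, and arguably cleaner, alternative to the paper's treatment of the measurability obstruction.
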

\begin{proof}
First note that necessity is obvious. To show sufficiency assume that for each $\tau \in \t^2$, there exists $B_\tau \in \ess(G)$ such that $B_\tau|T=b_\tau$, or equivalently
that
\[
\forall_{\tau\in \t^2}\ \ \exists_{H_\tau \in \herg(G)}\ \
H_\tau |T = h_\tau.
\]
To prove that $(G,T)$ is a norm preserving pair, by Lemma \ref{exist.lem.30} it suffices to show that if $h \in \herg_{(0,0)}(T)$ then there exists $H\in \herg(G)$ such that $H|T=h$.

Assume that $h \in \herg_{(0,0)}(T)$. By Lemma \ref{lem41}, there exist $h_1,h_2 \in \herg_0(\d)$ such that
\[
  h(\lambda) =
\left\{
	\begin{array}{ll}
		h_1(z)  & \mbox{if } \lambda=(z,0)\in D_1 \\ \\
		h_2(z) & \mbox{if } \lambda=(0,z)\in D_2.
	\end{array}
\right.
\]
By the Herglotz Representation Theorem, there exist probability measures $\mu_1,\mu_2$ on $\t$ such that
\[
h_1(z) = \int \frac{1+\tau z}{1-\tau z}\ d\mu_1(\tau)\qquad \text{ and }\qquad
h_2(z) = \int \frac{1+\tau z}{1-\tau z}\ d\mu_2(\tau).
\]
Let $\mu=\mu_1\times\mu_2$ (so that $\mu$ is a probability measure on $\t^2$).
First, suppose that the map $\tau \mapsto H_\tau$ is measurable.
Then we  define $H$ by the formula
\[
H(\lambda)=\int_{\tau \in \t^2} H_\tau(\lambda)\ d\mu(\tau);
\]
Clearly, as $\mu$ is a probability measure and $H_\tau\in \herg(G)$ for all $\tau \in \t^2$, $H\in \herg(G)$.
If $\lambda =(z,0) \in D_1$,
\begin{align*}
H(\lambda) &= \int_{\tau \in \t^2} H_\tau(\lambda)\ d\mu(\tau)\\ \\
&= \int_{\tau \in \t^2} h_\tau(\lambda)\ d\mu(\tau)\\ \\
&=\int_{\tau_2\in \t}\Big(\int_{\tau_1\in \t} \frac{1+\tau_1 z}{1-\tau_1 z}\ d\mu_1(\tau_1)\Big)\ d\mu_2(\tau_2)\\ \\
&=h_1(z)\\ \\
&=h(\lambda).
\end{align*}
Likewise, if $\lambda =(0,z) \in D_2$, $H(\lambda)= h(\lambda)$.
Consequently, $H\in \herg(G)$ and $H|T=h$, as was to be proved.

Now, let us drop the measurability hypothesis; we shall use an approximation argument to get round
the difficulty.
Let $K_n$ be a compact exhaustion of $G$. It thus follows
 (use e.g.  a Montel type argument) that there is a constant  $\delta_{n} > 0$ so that 
\[ \| B_\tau \|_{K_n} \  \leq \ 1 - \frac{1}{\delta_{n}} \quad \forall \ \tau \in \t^2 .
\]
\black 
Let
\[
M_{n} \ := \ 
\sup \{  \| \lambda \|_{\infty} : \lambda \in K_n \} \  < \infty .
\]
 Choose a finite set of points $F_{n}$ in $\t^2$ so that every point in
$\t^2$ is less than $\delta_{n}/M_{n}$ away from some point of $F_{n}$ in the $\ell^1$ norm. Let 
$\alpha: \t^2 \to F_{n}$ be a Borel map such that $\| \alpha(\tau) - \tau \|_{\ell^1} < \delta_{n}/M_{n}$ for all $\tau$.
Write
\[
B_\tau(\lambda) \ = \ \lambda_1 \tau_1 + \lambda_2 \tau_2  + 
\lambda_1 \lambda_2 \Gamma_\tau(\lambda).
\]
Define 
\[
B^{(n)}_\tau(\lambda) \ = \ \lambda_1 \tau_1 + \lambda_2 \tau_2 
+ 
\lambda_1 \lambda_2 \Gamma_{\alpha(\tau)} (\lambda).
\]
Then the map $\tau \mapsto B^{(n)}_\tau$ is Borel, and as
\[
| B^{(n)}_\tau (\l) - B_{\alpha(\tau)}(\lambda) | 
\ \leq \ M_{n} \| \tau - \alpha(\tau) \|_{\ell^1} \| \lambda \|_{\ell^\infty},
\]
we have that $ |B^{(n)}_\tau | < 1$ on $K_n$.
So if we define
 $H^{(n)}_\tau = C \circ B^{(n)}_\tau$,
 then $H^{(n)}_\tau$ is a holomorphic map from $K_n$ to the right-half plane.
Let
\[
H^{(n)}(\lambda)=\int_{\tau \in \t^2} H^{(n)}_\tau(\lambda)\ d\mu(\tau).
\]
The sequence $H^{(n)}$ will have a subsequence that converges uniformly on
compact subsets of $G$ to some function $H$ in $\herg(G)$; this will be our
desired extension of $h$.
\end{proof}
For $v$ a unit vector in $\c^2$ and $G$ a neighborhood of the origin in $\c^2$ let
\[
R_v = \sup\  \set{r}{\forall_{z\in \c}\ \  |z|<r \implies zv \in G}.
\]
\begin{lem}\label{lem43}
Assume that $G$ a neighborhood of the origin in $\c^2$ and $v$ is a unit vector in $\c^2$. If $(G,T)$ is np, then $R_v \le \frac{1}{|v_1|+|v_2|}$.
\end{lem}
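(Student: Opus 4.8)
The plan is to exploit Lemma~\ref{lem42}, which guarantees that when $(G,T)$ is np, each boundary function $b_\tau$ (for $\tau \in \t^2$) extends to a function $B_\tau \in \ess(G)$, and then to squeeze these extensions along the complex line $\c v$ by means of the Schwarz lemma. The quantity $R_v$ is defined precisely so that the whole slice $\{zv : |z| < R_v\}$ sits inside $G$, which is exactly the set on which we can run a one-variable argument.

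First I would fix $\tau \in \t^2$ and restrict $B_\tau$ to the line through the origin in direction $v$, setting $g_\tau(z) = B_\tau(zv)$. Since $zv \in G$ whenever $|z| < R_v$, the function $g_\tau$ is holomorphic on the disc $\{|z| < R_v\}$, and because $B_\tau \in \ess(G)$ we have $|g_\tau| \le 1$ there. Moreover $g_\tau(0) = B_\tau(0,0) = b_\tau(0,0) = 0$. The heart of the argument is the value of the derivative $g_\tau'(0)$: because $B_\tau|T = b_\tau$ with $b_\tau(z,0) = \tau_1 z$ and $b_\tau(0,z) = \tau_2 z$, the coordinate partials of $B_\tau$ at the origin are forced to be $\partial_{z_1} B_\tau(0) = \tau_1$ and $\partial_{z_2} B_\tau(0) = \tau_2$, so the chain rule gives $g_\tau'(0) = \tau_1 v_1 + \tau_2 v_2$.

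I would then apply the Schwarz lemma to $g_\tau$ on the disc of radius $R_v$: as $g_\tau$ maps this disc into $\overline{\d}$ and fixes $0$, we obtain $|g_\tau'(0)| \le 1/R_v$, that is, $|\tau_1 v_1 + \tau_2 v_2| \le 1/R_v$. Since this holds for every $\tau \in \t^2$, I would optimize the left-hand side by choosing $\tau_1, \tau_2$ to cancel the arguments of $v_1, v_2$ (so that $\tau_1 v_1 = |v_1|$ and $\tau_2 v_2 = |v_2|$), giving $\sup_{\tau \in \t^2} |\tau_1 v_1 + \tau_2 v_2| = |v_1| + |v_2|$. Combining these yields $|v_1| + |v_2| \le 1/R_v$, which is the desired inequality $R_v \le \frac{1}{|v_1|+|v_2|}$.

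The only delicate points I anticipate are the derivative identity $g_\tau'(0) = \tau_1 v_1 + \tau_2 v_2$ and the legitimacy of the Schwarz step on an open disc whose bounding circle need not lie in $G$. For the former one must check that the first-order behavior of the two-variable extension $B_\tau$ at the origin is genuinely pinned down by its restriction to the two axes comprising $T$; for the latter, one works on discs of radius $r < R_v$, applies the classical Schwarz lemma (scaled by $r$) to get $|g_\tau'(0)| \le 1/r$, and lets $r \to R_v$. Neither is a real obstruction, so I expect the argument to go through cleanly.
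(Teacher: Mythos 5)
Your proposal is correct and follows essentially the same route as the paper's proof: invoke Lemma~\ref{lem42} to get the extensions $B_\tau$, restrict to the complex line through $v$ (the paper simply rescales to $f(z)=B_\tau(zR_v v)$ on $\d$ rather than working on the disc of radius $R_v$), compute the derivative at the origin via the forced partials $\tau_1,\tau_2$, and apply the Schwarz lemma before optimizing over $\tau\in\t^2$. The delicate points you flag are handled identically, so there is nothing to add.
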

\begin{proof}
By Lemma \ref{lem42}, for each $\tau\in \t$, there exists $B_\tau \in \ess(G)$ such that $B_\tau|T=b_\tau$. Evidently, we have that
\[
\frac{\partial B_\tau}{\partial \lambda_1}(0,0)=\tau_1\qquad \text{ and }\qquad
\frac{\partial B_\tau}{\partial \lambda_2}(0,0)=\tau_2.
\]
Hence, by the chain rule, if we define an analytic function $f$ on $\d$ by the formula
\[
f(z)=B_\tau(zR_v v),
\]
\[
f'(0)=(\tau_1v_1 +\tau_2v_2)R_v.
\]
Hence, by Schwarz's Lemma,
\[
|\tau_1v_1 +\tau_2v_2| R_v \le 1.
\]
As $\tau \in \t^2$ is arbitrary, the result follows.
\end{proof}
\begin{thm}\label{thm44}
Let $G$ be a balanced domain of holomorphy in $\c^2$ with $T\subseteq G$. $(G,T)$ is norm preserving if and only if $G \subseteq \set{\lambda}{\ |\lambda_1|+|\lambda_2|<1}$
\end{thm}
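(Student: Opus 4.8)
The plan is to prove the two implications of Theorem~\ref{thm44} separately. Let $\Delta = \set{\lambda}{|\lambda_1|+|\lambda_2|<1}$. For the forward direction, suppose $(G,T)$ is norm preserving. Since $G$ is balanced, for every unit vector $v \in \c^2$ the quantity $R_v$ defined just before Lemma~\ref{lem43} is simply the largest $r$ with $rv \in G$, and balancedness means $zv \in G$ for all $|z| < R_v$. I would apply Lemma~\ref{lem43} directly: it gives $R_v \le \frac{1}{|v_1|+|v_2|}$ for every unit vector $v$. Now fix an arbitrary $\lambda \in G$ with $\lambda \ne 0$ and write $\lambda = zv$ where $v = \lambda/\|\lambda\|$ is a unit vector and $z = \|\lambda\|$. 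Since $\lambda \in G$ and $G$ is balanced (hence $R_v \ge \|\lambda\|$, with the inequality strict because $G$ is open), I get $\|\lambda\| < R_v \le \frac{1}{|v_1|+|v_2|}$, which rearranges to $\|\lambda\|(|v_1|+|v_2|) < 1$, that is $|\lambda_1|+|\lambda_2| < 1$, so $\lambda \in \Delta$. Hence $G \subseteq \Delta$.

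\textbf{The converse direction.}

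For the reverse implication, assume $G$ is a balanced domain of holomorphy with $T \subseteq G \subseteq \Delta$. By Lemma~\ref{lem42}, it suffices to produce, for each $\tau \in \t^2$, a function $B_\tau \in \ess(G)$ with $B_\tau | T = b_\tau$. The natural candidate is the linear function
\[
B_\tau(\lambda) = \tau_1 \lambda_1 + \tau_2 \lambda_2,
\]
which manifestly restricts to $b_\tau$ on $T$: on $D_1$ it is $\tau_1 z$ and on $D_2$ it is $\tau_2 z$, as required. The only thing to check is that $B_\tau \in \ess(G)$, i.e. that $\sup_{\lambda \in G}|B_\tau(\lambda)| \le 1$. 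But for $\lambda \in G \subseteq \Delta$ we have, by the triangle inequality and $|\tau_1|=|\tau_2|=1$,
\[
|B_\tau(\lambda)| = |\tau_1 \lambda_1 + \tau_2 \lambda_2| \le |\lambda_1| + |\lambda_2| < 1,
\]
so indeed $B_\tau \in \ess(G)$. Thus the hypothesis of Lemma~\ref{lem42} is satisfied for every $\tau \in \t^2$, and therefore $(G,T)$ is norm preserving.

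\textbf{Where the difficulty lies.}

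The genuinely substantive content is not in this theorem itself but has already been absorbed into the lemmas: Lemma~\ref{lem43} packages the Schwarz-lemma obstruction that forces $G \subseteq \Delta$, and Lemma~\ref{lem42} reduces the norm preserving property to the extendability of the single family $\{b_\tau\}$, via the Herglotz-integral averaging argument (including the measurability workaround). Given those, the proof of Theorem~\ref{thm44} reduces to the two elementary observations above. I expect the only point requiring a moment's care is the forward direction's passage from ``$\lambda \in G$'' to the strict inequality $\|\lambda\| < R_v$, which uses that $G$ is open and balanced so that it contains a slightly larger dilate $z v$; this is what lets the conclusion $|\lambda_1|+|\lambda_2|<1$ (strict, matching the open set $\Delta$) go through rather than merely $\le 1$.
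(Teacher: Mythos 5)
Your proof is correct and follows exactly the paper's route: the paper's entire proof of Theorem~\ref{thm44} is the one-line remark that Lemma~\ref{lem43} gives necessity and Lemma~\ref{lem42} gives sufficiency, and you have simply (and accurately) filled in the details---the balanced-plus-open argument giving the strict inequality $\|\lambda\| < R_v$ in the forward direction, and the linear extension $B_\tau(\lambda)=\tau_1\lambda_1+\tau_2\lambda_2$ in the converse.
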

\begin{proof}
Lemma \ref{lem43} implies necessity and Lemma \ref{lem42} implies sufficiency.
\end{proof}
In the unbalanced case things are not so clean. However, the following theorem obtains.
\begin{thm} \label{thm45}
Let $G$ be a domain in $\c^2$. Then
$(G,T)$ is an np pair if and only if $T$ is a relatively closed subset of $G$ and there exist a pseudoconvex set $U$ in $\c^2$ and a function $\tau \mapsto C_\tau$ from $\t^2$ into $\hol(U)$ such that
\begin{equation}
\label{eq46}
G= \bigcap_{\tau \in \t^2} \set{\lambda \in U\ }{\ \ |\tau \cdot \lambda +\lambda_1\lambda_2 C_\tau(\lambda)|<1}\Big.
\end{equation}
\end{thm}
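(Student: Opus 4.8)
The plan is to reduce everything to Lemma~\ref{lem42}, which says that for a Cartan pair $(G,T)$ the np property is equivalent to the extendability of the single family $\{b_\tau\}_{\tau\in\t^2}$ inside $\ess(G)$. The key observation is that for \emph{any} $C_\tau\in\hol(U)$ the function $B_\tau(\lambda):=\tau\cdot\lambda+\lambda_1\lambda_2 C_\tau(\lambda)$ automatically restricts to $b_\tau$ on $T$: on $D_1$ we have $\lambda_2=0$, so $B_\tau(z,0)=\tau_1 z$, while on $D_2$ we have $\lambda_1=0$, so $B_\tau(0,z)=\tau_2 z$. Hence $B_\tau$ is an extension of $b_\tau$, and $B_\tau\in\ess(G)$ precisely when $|B_\tau|<1$ on $G$, i.e.\ exactly when $G$ is contained in the corresponding sublevel set in \eqref{eq46}.

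For sufficiency, suppose $T$ is relatively closed in $G$ and $G$ has the form \eqref{eq46}. First I would check that $G$ is a domain of holomorphy: for each $\tau$ the set $\{\lambda\in U:|B_\tau(\lambda)|<1\}$ is pseudoconvex, since $-\log(1-|B_\tau|)$ is a convex increasing function of the plurisubharmonic function $|B_\tau|$ and so, together with a plurisubharmonic exhaustion of the pseudoconvex $U$, furnishes one for this sublevel set. The connected open set $G$, obtained as the intersection of these pseudoconvex open sets, is then a domain of holomorphy exactly as in the proof of Theorem~\ref{exist.thm.20}. As $T=\{\lambda_1\lambda_2=0\}$ is analytic and relatively closed, $(G,T)$ is a Cartan pair, and by \eqref{eq46} each $B_\tau$ satisfies $|B_\tau|<1$ on $G$, so $B_\tau\in\ess(G)$ with $B_\tau|T=b_\tau$. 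Lemma~\ref{lem42} now gives that $(G,T)$ is np.

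For necessity, suppose $(G,T)$ is np. Then $(G,T)$ is a Cartan pair, so $G$ is a domain of holomorphy and $T$ is relatively closed. By Lemma~\ref{lem42} there is, for each $\tau\in\t^2$, a function $B_\tau\in\ess(G)$ with $B_\tau|T=b_\tau$; since $B_\tau|D_1$ equals the nonconstant map $z\mapsto\tau_1 z$, the Maximum Principle forces $|B_\tau|<1$ throughout $G$. I would then take $U:=G$ and try to write $B_\tau=\tau\cdot\lambda+\lambda_1\lambda_2 C_\tau$ with $C_\tau\in\hol(G)$, for then \eqref{eq46} holds with $G=\bigcap_{\tau\in\t^2}\{\lambda\in G:|B_\tau(\lambda)|<1\}$. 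The function $g_\tau:=B_\tau-\tau\cdot\lambda$ is holomorphic on $G$ and vanishes on $T$; dividing it in turn by the coordinate functions (legitimate on the domain of holomorphy $G$, where a holomorphic function vanishing on $\{\lambda_i=0\}\cap G$ is divisible by $\lambda_i$) produces the required $C_\tau$, provided $g_\tau$ vanishes on \emph{all} of $\{\lambda_1\lambda_2=0\}\cap G$, not merely on $T$.

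This is where the one real obstacle lies, and it is purely geometric: representation \eqref{eq46} is possible only if $\{\lambda_1\lambda_2=0\}\cap G=T$, since at an axis point $(a,0)\in G$ with $|a|\ge1$ one would have $|B_\tau(a,0)|=|a|\ge1$, excluding it from every sublevel set. The lemma I must therefore establish is that a connected domain of holomorphy $G$ containing $T$, in which $T$ is relatively closed, satisfies $\{\lambda_1\lambda_2=0\}\cap G=T$. The component of $\{\lambda_2=0\}\cap G$ containing $D_1$ equals $D_1$ for trivial reasons: the circle $\{|\lambda_1|=1,\lambda_2=0\}$ is disjoint from $G$ because $T$ is relatively closed, so it cannot be crossed within the slice, and $D_1$ is therefore relatively clopen and hence a component. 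Any axis point with $|\lambda_1|\ge1$ would lie in a \emph{different} component, separated from $D_1$ by a gap in the axis; since $G$ is connected this component must be joined to the rest of $G$ through off-axis points lying over that gap, so that over the gap the fibers in the $\lambda_2$-direction omit $0$ while neighbouring fibers contain $0$. This traps a bounded piece of the complement of $G$, a Hartogs figure, contradicting pseudoconvexity via the Kontinuit\"atssatz. I expect the careful formulation of this continuity-principle argument, ensuring the trapped complement is genuinely compactly surrounded, to be the main technical point; once it is in hand, $g_\tau$ vanishes on the entire coordinate cross, the division goes through, and necessity follows with $U=G$.
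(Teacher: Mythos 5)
Your two directions follow the same route as the paper's own proof: sufficiency is exactly the paper's argument (each set in \eqref{eq46} is pseudoconvex, so $G$ is a domain of holomorphy; the functions $B_\tau=\tau\cdot\lambda+\lambda_1\lambda_2C_\tau$ restrict to $b_\tau$ on $T$, so Lemma~\ref{lem42} gives np), and for necessity both you and the paper take $U=G$ and write a norm-preserving extension $B_\tau$ of $b_\tau$ in the form \eqref{eq47}. The difference is that the paper disposes of this last step in one clause (``and therefore of the form \eqref{eq47}''), whereas you correctly observe that writing $B_\tau-\tau\cdot\lambda=\lambda_1\lambda_2C_\tau$ with $C_\tau\in\hol(G)$ requires $B_\tau-\tau\cdot\lambda$ to vanish on all of $\{\lambda_1\lambda_2=0\}\cap G$, not merely on $T$, and hence requires the geometric fact that $G$ meets the coordinate axes only in $T$. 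You are also right that this fact is unavoidable: if $(a,0)\in G$ with $|a|>1$, no function of the form \eqref{eq47} can lie in $\ess(G)$, so no representation \eqref{eq46} can exist. Identifying this as the crux is to your credit; the paper passes over it in silence.

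However, your proposed proof of that geometric lemma is where your argument has a genuine gap, and the sketch as written would fail. The configuration you invoke --- fibers in the $\lambda_2$-direction containing $0$ over two base regions, omitting $0$ over the gap between them, with $G$ connected through points over the gap --- does not by itself ``trap a bounded piece of the complement'' or produce a Hartogs figure. For instance, the connected pseudoconvex domain $\{\lambda\in\c^2:\ |\lambda_2-(\lambda_1^2-2)|<1/10\}$ has exactly this structure: its slice by $\{\lambda_2=0\}$ has two components (near $\lambda_1=\pm\sqrt{2}$), each of which is relatively clopen in the slice with its full relative boundary excluded from the domain, the fibers over the gap omit $0$, and yet the domain satisfies the Kontinuit\"atssatz. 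So no continuity-principle argument can run on the fiber dichotomy and connectivity alone; it must use what is special in the present situation, namely that the fibers contain $0$ over the \emph{entire} unit disc while the \emph{entire} circle $\partial\d\times\{0\}$ is excluded. Making that precise is not routine: the off-axis points through which $G$ joins the two slice components may lie at arbitrary heights $|\lambda_2|$, so there is no evident continuous family of closed analytic discs whose boundaries stay in a fixed compact subset of $G$ and whose interiors sweep across a point of the circle. Until this lemma is actually proved, your necessity direction is incomplete --- though, to be fair, the same unproved assertion is buried in the paper's one-line treatment of \eqref{eq47}.
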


\begin{proof}
Suppose $G$ has the form \eqref{eq46}. Since $U$ is pseudoconvex, the  pre-image
of any pseudoconvex set under a holomorphic map is pseudoconvex \cite[Prop. 4.1.7]{jj20}, so each of the sets indexed by $\tau$ on the right-hand side of \eqref{eq46} is
pseudoconvex. Therefore the interior of their intersection is
\cite[Cor. II.3.19]{ran}, and since $G$ is assumed open, this means that
 $G$ is a domain of holomorphy, and hence $(G,T)$ is a Cartan pair.
Every $b_\tau$ has an extension to the function
\begin{equation}
\label{eq47}
B_\tau(\lambda) \ = \ 
\tau \cdot \lambda +\lambda_1\lambda_2 C_\tau(\lambda) 
\end{equation}
in $\ess(G)$, so by Lemma \ref{lem42} we conclude that $(G,T)$ is np.

Conversely, suppose $(G,T)$ is np. By Lemma \ref{lem42}, there is a map
$\tau \mapsto B_\tau$ where each $B_\tau$ is a norm preserving extension of $b_\tau$, and therefore of the form \eqref{eq47}. Choosing $U = G$ we get \eqref{eq46}.
\end{proof}

Let $\Delta = \set{\lambda}{\ |\lambda_1|+|\lambda_2|<1}$.
Using Theorem \ref{thm45} one can construct simple $G$'s that neither contain or are contained in $\Delta$ and such that $(G,T)$ is norm preserving---see e.g. Example \ref{exam49}. 
However, one cannot construct $G$'s that properly contain $\Delta$ such that $(G,T)$ is norm preserving.
\begin{thm}
If $G$ properly contains $\Delta$, then $(G,T)$ is not norm preserving.
\end{thm}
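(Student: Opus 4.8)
The plan is to assume, for contradiction, that $(G,T)$ is np while $\Delta\subsetneq G$, and to produce a single bounded extension whose modulus is forced to exceed $1$ at a point of $G$ outside $\overline{\Delta}$. Since $G$ is open and properly contains $\Delta$, there is a point $p\in G$ with $s:=|p_1|+|p_2|>1$; indeed if the only points of $G\setminus\Delta$ lie on $\partial\Delta$, openness of $G$ supplies a nearby point strictly outside $\overline{\Delta}$. Assume first $p_1p_2\neq0$ and set $\tau^\ast=(\overline{p_1}/|p_1|,\overline{p_2}/|p_2|)\in\t^2$, so that $\tau^\ast\cdot p=|p_1|+|p_2|=s$. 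By Lemma~\ref{lem42} there is $B:=B_{\tau^\ast}\in\ess(G)$ with $B|T=b_{\tau^\ast}$, and as in the proof of Theorem~\ref{thm45} this extension has the form \eqref{eq47}, namely $B(\lambda)=\tau^\ast\cdot\lambda+\lambda_1\lambda_2\,C(\lambda)$ with $C\in\hol(G)$. If instead one coordinate of $p$ vanishes, say $p_2\neq0=p_1$, then the same representation gives $B(p)=\tau^\ast\cdot p=|p_2|=s>1$ at once, contradicting $B\in\ess(G)$; so I may assume $p_1p_2\neq0$.

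The second step extracts rigidity from the hypothesis $\Delta\subseteq G$ via the equality case of the Schwarz lemma, sharpening the computation of Lemma~\ref{lem43}. For any unit vector $v$ with $\tau^\ast_1v_1>0$ and $\tau^\ast_2v_2>0$ one has $\tau^\ast\cdot v=|v_1|+|v_2|=1/R$, where $R=1/(|v_1|+|v_2|)$ and the disc $\set{wv}{|w|<R}$ lies in $\Delta\subseteq G$. The slice $\beta(w):=B(wv)$ is holomorphic and bounded by $1$ on $\set{w}{|w|<R}$, with $\beta(0)=0$ and $\beta'(0)=\tau^\ast\cdot v=1/R$. This saturates Schwarz's lemma, so $\beta(w)=(\tau^\ast\cdot v)\,w$ and hence $C(wv)=0$ for $|w|<R$. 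Letting $v$ range over all such directions shows that $D:=B-\tau^\ast\cdot\lambda=\lambda_1\lambda_2C$ vanishes on the relatively open set $\set{(\overline{\tau^\ast_1}x_1,\overline{\tau^\ast_2}x_2)}{x_1,x_2>0,\ x_1+x_2<1}$.

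The crux is to upgrade this vanishing of $D$ from a real two-parameter set to all of $G$. The set just produced is a relatively open piece of the maximal totally real plane $P=\set{(\overline{\tau^\ast_1}x_1,\overline{\tau^\ast_2}x_2)}{x_1,x_2\in\r}$, the image of $\r^2\subseteq\c^2$ under an invertible complex-linear map. A holomorphic function vanishing on a relatively open subset of a maximal totally real plane vanishes identically on any connected open set containing that piece; applying this to $D\in\hol(G)$ and using that $G$, being a domain, is connected, I conclude $D\equiv0$, i.e. $B(\lambda)=\tau^\ast\cdot\lambda$ on all of $G$. Evaluating at $p$ gives $B(p)=\tau^\ast\cdot p=s>1$, contradicting $B\in\ess(G)$. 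Hence no point of $G$ lies outside $\overline{\Delta}$, so $G$ cannot properly contain $\Delta$.

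I expect the totally real globalization to be the main obstacle, and the reason it is needed is instructive: one-variable Schwarz rigidity only pins $B$ down on complex discs through the origin, and because $G$ need be neither balanced nor star-shaped, the point $p$ may be invisible from the origin along every complex line that stays in $G$ (the relevant line slice of $G$ can be disconnected). What saves the argument is that the radial discs collectively sweep out a totally real plane, forcing the holomorphic error term $D$ to vanish on a uniqueness set; the identity principle on the connected domain $G$ then transports the rigid linear behaviour valid on $\Delta$ out to the far point $p$, with no need for any slice or path in $G$ to remain connected. The two points to check with care are that the Schwarz inequality is genuinely an equality (one verifies $\beta$ maps into the \emph{open} disc, so $\beta$ is not a unimodular constant), and that the totally real uniqueness theorem is invoked on all of the connected $G$ rather than merely on $\Delta$.
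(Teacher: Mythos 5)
Your main case ($p_1p_2\neq 0$) is correct, but it reaches the contradiction by a different globalization than the paper. The paper sidesteps the ``invisibility'' issue you describe by choosing the witness point on $\partial\Delta$ itself: connectedness of $G$ forces some point $P\in\partial\Delta$ to lie in $G$, and then the slice of $G$ along the complex line through $0$ and $P$ has a connected component properly containing the unit disc of that line; one application of Schwarz's lemma plus the one-variable identity theorem along this single slice finishes the proof. You instead fix a far point $p\notin\overline{\Delta}$, run the Schwarz equality case over the family of radial discs, conclude that $D=B-\tau^*\cdot\lambda$ vanishes on an open piece of a maximal totally real plane, and then use the totally-real uniqueness principle together with connectedness of $G$ to force $B\equiv\tau^*\cdot\lambda$ on all of $G$. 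Both are Schwarz-rigidity proofs; the paper's choice of $P$ makes the propagation one-dimensional and very short, while yours buys a stronger intermediate conclusion (any norm-preserving extension of $b_{\tau^*}$ is globally linear) at the cost of invoking the totally-real identity principle. That principle is correct and standard, so this part of your argument is sound.

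The genuine defect is the degenerate case $p_1p_2=0$. There you evaluate the representation $B=\tau^*\cdot\lambda+\lambda_1\lambda_2 C$, $C\in\hol(G)$, at $p=(0,p_2)$; but that representation requires $B-\tau^*\cdot\lambda$ to vanish on all of $\{\lambda\in G : \lambda_1\lambda_2=0\}$, whereas extension of $b_{\tau^*}$ only gives vanishing on $T$. The two sets differ exactly in the situation at hand: if $(G,T)$ is a Cartan pair then $T$ is relatively closed in $G$, so the slice $G\cap\{\lambda_1=0\}$ contains $\d$ and $p_2$ with $|p_2|>1$ but no point of $\t$; hence $\d$ is a full connected component of that slice, the identity theorem cannot carry $B(0,z)=\tau_2^*z$ across to the component containing $p_2$, and $B(0,p_2)$ is not determined by $B|T$ at all. (The same caveat attaches to the phrase ``therefore of the form \eqref{eq47}'' in the proof of Theorem \ref{thm45}, from which you quoted the representation; where you use it in the main case it is harmless, since there Schwarz's lemma gives the vanishing of $D$ on the discs directly, without any factorization.) Fortunately the flawed case never needs to arise: $G\setminus\overline{\Delta}$ is a nonempty open set and $\{\lambda_1\lambda_2=0\}$ has empty interior, so you can always choose the witness $p$ with $|p_1|+|p_2|>1$ and $p_1p_2\neq 0$, and delete the degenerate case entirely. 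With that modification your proof is complete.
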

\begin{proof}
There is some point $P$ on $\partial \Delta$ that is not in $\partial G$.
Therefore the intersection of the  complex line  through $0$ and $P$ with $G$ is a set $U$ that properly
contains the unit disk in that plane.

Define
\[
  f(\lambda) =
\left\{
	\begin{array}{ll}
		 \frac{\overline{P_1}}{|P_1|} z  & \mbox{if } \lambda=(z,0)\in D_1 \\ \\
		 \frac{\overline{P_2}}{|P_2|} z & \mbox{if } \lambda=(0,z)\in D_2,
	\end{array}
\right.
\]
where we interpret $0/0$ as $0$. Then by Schwarz's lemma, $f$ cannot have a norm-preserving extension to $U$, and therefore not to $G$ either.
\end{proof}

\begin{exam}
\label{exam49}
Define
\[
C_\tau(\lambda) \ = \  (\tau \cdot \lambda) ,
\]
and define $G$ by \eqref{eq46}, where $U = \d^2$.
Then 
\[
G \ = \ \{ \l \in \d^2 | \ ( | \l_1 | + | \l_2 |)  | 1 +  \l_1 \l_2 | < 1 \} ,
\]
so $G$ is open and connected, and hence $(G,T)$ is an np pair.
However $G$ is not contained in $\Delta$, since it contains
the points $(r,-r)$ for $0 \leq r < 1$.
\end{exam}


\section{Linear isometric extension operators}
\label{seclinear}

In \cite[Sec. 7.5]{rud69} Rudin asks if $(\O,V)$ is a Cartan pair with the
bounded extension property, 
is there a bounded linear extension operator? In this section we show that the answer is no in the case of norm preserving extensions.

We shall say that a Cartan pair $\car$ has the {\em linear norm preserving extension property}
if there is a linear isometry $E: \hinf(V) \to \hinf(\O)$ such that $(E f)(\lambda) = f(\lambda)$
for every $\lambda$ in $V$.

We show in Theorem \ref{thm51} that $(\Delta,T)$ does not have the linear norm preserving extension property.
There are Cartan pairs $(G,T)$ that do have the linear norm preserving
extension property.

Let 
\[
m_a(z) \ =\ \frac{a - z}{1 - \bar a z} 
\]
be the M\"obius map swapping $a$ and $0$.
Whenever $f_1, f_2$ are in $\hol(\d)$ and 
$f_1(0) = f_2(0)$, let
$\Psi_{f_1,f_2}$ be the function in $\hol(T)$ given by the right-hand side of \eqref{eq41}.
\begin{thm}
\label{thm51}
The pair $(\Delta, T)$ does not have the linear
norm preserving extension property.
\end{thm}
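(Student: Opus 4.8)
The plan is to assume $(\Delta,T)$ carries a linear norm preserving extension operator $E:\hinf(T)\to\hinf(\Delta)$ (so $E$ is a linear isometry with $Ef|T=f$) and to derive a contradiction by computing one mixed Taylor coefficient of a single extended function in two incompatible ways.

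\emph{Step 1 (pinning $E$ on low order data).} First I would note that $B_\tau:=Eb_\tau$ is a norm preserving extension of $b_\tau$, so by Lemma~\ref{lem42} and the discussion preceding Theorem~\ref{thm45} it has the form $B_\tau(\lambda)=\tau\cdot\lambda+\lambda_1\lambda_2C_\tau(\lambda)$. I claim that on $\Delta$ this extension is \emph{unique}, i.e.\ $C_\tau\equiv0$. Fixing $a,b>0$ with $a+b=1$ and putting $v=(a\overline{\tau_1},b\overline{\tau_2})$ (so $|v_1|+|v_2|=1$ and $\tau\cdot v=1$), the function $g(t)=B_\tau(tv)$ maps $\d\to\overline{\d}$ with $g(0)=0$, $g'(0)=\tau\cdot v=1$, so Schwarz's Lemma forces $g(t)=t$ and hence $v_1v_2C_\tau(tv)=0$. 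Letting $a$ range over $(0,1)$ shows $C_\tau$ vanishes on $\{\lambda\in\Delta:\arg(\tau_1\lambda_1)=\arg(\tau_2\lambda_2)\}$, a real $3$-dimensional set; as the zero set of a nonzero holomorphic function is real $\le2$-dimensional, $C_\tau\equiv0$. Thus $Eb_\tau=\tau\cdot\lambda$; in particular $E\Psi_{z,z}=\lambda_1+\lambda_2$ and $E\Psi_{-z,z}=-\lambda_1+\lambda_2$. Finally, since $E\mathbf 1$ is a norm preserving extension of the constant function $\mathbf 1$ that equals $1$ at the origin, the Maximum Principle applied to $\zeta\mapsto E\mathbf 1(\zeta,\zeta)$ and to $\zeta\mapsto E\mathbf 1(\zeta,-\zeta)$ gives $E\mathbf 1\equiv1$ on the diagonal and the antidiagonal.

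\emph{Step 2 (two inner families).} I would then introduce, for $a\in\d$, the functions $\phi_a=\Psi_{m_a,m_a}$ and $\chi_a=\Psi_{m_a(z),\,m_a(-z)}\in\ess(T)$; both branches agree at $0$, and both functions have norm exactly $1$ because $m_a$ is inner. Expanding $m_a(z)=-z+a-\bar a z^2+O(|a|^2)$ uniformly on $\overline{\d}$ and using that $E$ is $\c$-linear and continuous,
\[
E\phi_a=-(\lambda_1+\lambda_2)+a\,E\mathbf 1-\bar a\,W+O(|a|^2),\qquad
E\chi_a=(-\lambda_1+\lambda_2)+a\,E\mathbf 1-\bar a\,W+O(|a|^2),
\]
where $W:=E\Psi_{z^2,z^2}$ satisfies $\|W\|_\Delta=1$, $W(\lambda_1,0)=\lambda_1^2$ and $W(0,\lambda_2)=\lambda_2^2$. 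Writing $W=\sum c_{jk}\lambda_1^j\lambda_2^k$, the two restrictions give $c_{20}=c_{02}=1$, while the quantity I want to trap is $c_{11}=\partial_{\lambda_1}\partial_{\lambda_2}W(0)$.

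\emph{Step 3 (restrict to the (anti)diagonal and apply Schwarz).} The crucial device is to pass to the diagonal disc, on which the base term $-(\lambda_1+\lambda_2)$ becomes $-\zeta$, a rotation of $\d$ that peaks on the \emph{whole} boundary circle. With $\gamma(\zeta):=W(\zeta,\zeta)$ set $G_a(\zeta):=-E\phi_a(\tfrac\zeta2,\tfrac\zeta2)$; then $G_a:\d\to\overline{\d}$ is a perturbation of the identity with $G_a(0)=-E\phi_a(0,0)=-a$ exactly and $G_a(\zeta)=\zeta-a+\bar a\,\gamma(\tfrac\zeta2)+O(|a|^2)$. Composing with the automorphism sending $-a$ to $0$ yields $H_a:\d\to\overline{\d}$ with $H_a(0)=0$ and
\[
H_a(\zeta)=\frac{G_a(\zeta)+a}{1+\bar a\,G_a(\zeta)}=\zeta+\bar a\bigl(\gamma(\tfrac\zeta2)-\zeta^2\bigr)+O(|a|^2).
\]
Schwarz's Lemma gives $|H_a(\zeta)|\le|\zeta|$, whence $2\Re\bigl(\bar\zeta\,\bar a(\gamma(\tfrac\zeta2)-\zeta^2)\bigr)\le O(|a|^2)$ for all $\zeta\in\d$; letting $a=\rho e^{i\theta}\to0$ along rays forces $\gamma(\tfrac\zeta2)=\zeta^2$, i.e.\ $W(\zeta,\zeta)=4\zeta^2$, so $c_{20}+c_{11}+c_{02}=4$ and $c_{11}=2$. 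The identical argument on the antidiagonal with $\chi_a$ (again $G_a(0)=-a$, base $-\zeta$, and $\tilde\gamma(w)=W(w,-w)$) gives $W(\zeta,-\zeta)=4\zeta^2$, hence $c_{20}-c_{11}+c_{02}=4$ and $c_{11}=-2$. These are incompatible, so no such $E$ exists.

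\emph{Main obstacle.} The delicate point is that the interior bound $|E\phi_a|\le1$ carries \emph{no} first order information at interior points (there is strict slack $|\lambda_1+\lambda_2|<1$), and the peak set of $\lambda_1+\lambda_2$ on $\partial\Delta$ meets the diagonal only in a circle that is null within it. Restricting to the diagonal (resp.\ antidiagonal) disc first—so that the pinned base becomes a boundary-fixing rotation peaking on the entire circle—and then normalizing to apply Schwarz's Lemma is exactly what converts the isometry hypothesis into the usable first order identity $\gamma(\zeta/2)=\zeta^2$. I also expect the uniqueness statement $C_\tau\equiv0$ of Step~1 to be where the specific geometry of $\Delta$ enters: it is what pins $E$ rigidly enough that the two computations of $c_{11}$ are each forced.
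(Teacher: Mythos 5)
Your proof is correct, and after a common first stage it diverges from the paper's argument in a genuine way. The first stage is the same in substance: you pin the degree-one data by Schwarz's lemma on the radial discs $\{t(a\bar\tau_1,(1-a)\bar\tau_2):t\in\d\}$ together with the observation that the zero set of a nonzero holomorphic function on a domain in $\c^2$ cannot contain a real three-dimensional set; the paper does exactly this, except that it uses linearity in $\tau$ to reduce to the two extensions of $\Psi_{z,0}$ and $\Psi_{0,z}$, whereas you prove outright that $\tau\cdot\lambda$ is the unique norm preserving extension of $b_\tau$ to $\Delta$ (a slightly stronger statement obtained from the same computation); both arguments then get $E\mathbf{1}\equiv 1$ from the maximum principle. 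The endgames genuinely differ. The paper fixes $a\in\d\setminus\{0\}$ and uses the whole family $\Psi_{m_a(z),m_a(\omega z)}$, $\omega\in\t$: Schwarz--Pick pins the extension on the disc $\lambda_2=\bar\omega\lambda_1$, giving the factorization \eqref{eq: ma1}; then the full series $m_a=\sum_n a_n z^n$ is pushed through $E$ term by term (legitimate since an isometry is bounded), so the extensions $f_n,g_n$ of all the monomials $\Psi_{z^n,0},\Psi_{0,z^n}$ enter, and comparison of $\lambda_1\lambda_2$-coefficients yields the identity \eqref{eq53}, whose Fourier coefficient at frequency one in $\omega$ forces $a=0$. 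You instead work perturbatively as $a\to 0$ and use only $\omega=\pm 1$: the first-order term in $a$ isolates the single extension $W=E\Psi_{z^2,z^2}$, and your M\"obius-renormalized Schwarz argument on the diagonal and antidiagonal discs (your identity $\gamma(\zeta/2)=\zeta^2$ is exactly the linearization in $a$ of the paper's claim that $E(\Psi_{m_a(z),m_a(\omega z)})(\zeta/2,\bar\omega\zeta/2)=m_a(\zeta)$) forces $W(\zeta,\zeta)=4\zeta^2$ and $W(\zeta,-\zeta)=4\zeta^2$, hence $c_{11}=2$ and $c_{11}=-2$ simultaneously. Your route is leaner: one unknown function, two test families, no infinite series pushed through $E$, and no Fourier analysis in $\omega$; it also makes the obstruction vivid, namely that $\Psi_{z^2,z^2}$ has the two norm preserving extensions $(\lambda_1+\lambda_2)^2$ and $(\lambda_1-\lambda_2)^2$, and linearity would force $E\Psi_{z^2,z^2}$ to agree with the first on the diagonal and with the second on the antidiagonal, which no single function can do. The paper's route is heavier but extracts more structure, determining the form of $E(\Psi_{m_a(z),m_a(\omega z)})$ for every $a$ and every $\omega$ at once.
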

\begin{proof}
Suppose there were an isometric linear extension operator $E$.
Let the extensions of $\Psi_{z,0}$ and $\Psi_{0,z}$ be given by
\begin{eqnarray*}
E(\Psi_{z,0}) (\lambda) &\ = \ & \lambda_1 + \lambda_1 \l_2 f_1(\l) \\
E(\Psi_{0,z}) (\lambda) &\ = \ & \lambda_2 + \lambda_1 \l_2 g_1(\l)
\end{eqnarray*}
where $f_1,g_1$ are  holomorphic functions on $\Delta$.
For unimodular $\tau_1$ and $\tau_2$ we get, by linearity of $E$, that 
$$
E\circ \Psi_{\tau_1 z, \tau_2 z} : \l \mapsto  \tau_1 \l_1 + \tau_2 \l_2 + \l_1 \l_2 (\tau_1 f_1(\l) + \tau_2 g_1(\l))$$
 maps $\Delta$ into the unit disc. For $t\in [0,1]$
 $$\psi_t:\zeta \mapsto E(\Psi_{\tau_1 z_1, \tau_2 z_2})(\bar\tau_1 t \zeta, \bar \tau_2 (1-t) \zeta)$$ is thus a well defined holomorphic
  selfmap of the unit disc such that $\psi_t(0)=0$ and $\psi'_t(0)=1$. Therefore, $\psi_t(\zeta) = \zeta$ for every $\zeta \in \d$ and $t\in [0,1]$.
So $\tau_1f_1 + \tau_2 g_1 \equiv 0$ on the
   three real dimensional set $\{(\bar\tau_1 t \zeta, \bar \tau_2 (1-t) \zeta):\ \zeta\in \d,\ t\in [0,1]\}.$ Consequently $\tau_1 f_1 + \tau_2 g_1 = 0$, and as this is true for all choices of
   $\tau$, we get  $f_1=g_1=0$. 
By the maximum principle, $E(1) = 1$, so $E$ is the identity on all affine polynomials.

Now let us consider  $\Psi_{m_a(z), m_a(\omega z)}$, where $\omega \in \t$, and $a \in \d \setminus \{ 0 \}$.
 Observe that 
 $$
 E(\Psi_{m_a(z), m_a(\omega z)})(\l) = a - (1-|a|^2) (\l_1 + \omega \l_2) + \l_1 \l_2 h(\l),$$
 for some $h \in \hol(\Delta)$.

  It follows from the Schwarz lemma that the function
  \[
  \zeta \mapsto E(\Psi_{m_a(z), m_a(\omega z)})(\zeta/2, \bar \omega \zeta/2)
  \]
   is equal to the M\"obius function $m_a(\zeta)$.
It follows that the mappings $E(\Psi_{m_a(z), m_a(\omega z)})(\l)$ and $m_a(\l_1 + \omega \l_2)$ coincide on $\l_1=0$, $\l_2=0$ and $\l_2 = \bar \omega \l_1$; in particular
\begin{equation}\label{eq: ma1}
E(\Psi_{m_a(z), m_a(\omega z)})(\l) = m_a(\l_1+ \omega \l_2) + \l_1 \l_2 (\bar \omega \l_1 - \l_2) F_{\omega}(\l_1,\l_2).\end{equation}
 
To get a contradiction we shall compute $E(\Psi_{m_a(z), m_a(\omega z)})$ using a Taylor expansion of $m_a$. Let $f_n,g_n$ be functions in $\hol(\Delta)$  such that
\begin{eqnarray*}
E(\Psi_{z^n,0}) (\l) & \ = & \l_1^n + \l_1 \l_2 f_n(z) \\
E(\Psi_{0,z^n}) (\l) &= &  \l_2^n + \l_1 \l_2g_n(\l) .
\end{eqnarray*}

Write $m_a(z) = \sum_n a_n z^n$, where $a_0 = a$ and $a_n = - (1-|a|^2) \bar a^{n-1}$ for $n \geq 1$.
Then 
\begin{eqnarray*}
 E(\Psi_{m_a(z), m_a(\omega z)})(\l)  &\ =\ & E(\Psi_{m_a, 0}) (\l) + E(\Psi_{0,m_a( \omega z)})(\l) - a \\
&=&
a - (1-|a|^2) (\l_1 + \omega \l_2) + \sum_{n\geq 2} a_n ( \l_1^n + \omega^n \l_2^n) + \\
&&\quad \l_1 \l_2
\sum_{n\geq 2} a_n ( f_n(\l) + \omega^n g_n(\l))\\
&=&
 m_a(\l_1 + \omega \l_2) + 2 \omega \bar a (1-|a|^2) \l_1 \l_2 +\\
&&\quad   \l_1 \l_2\sum_{n\geq 2} a_n f_n(\l) + \l_1 \l_2\sum_{n\geq 2} \omega^n a_n g_n (\l) +  O(||\l||^3).
\end{eqnarray*}
Comparing the last equation with \eqref{eq: ma1}, we see that the coefficients of $\l_1 \l_2$
must agree, which yields
\begin{equation}
0 \ = \ 2 \omega \bar a (1-|a|^2) + \sum_{n\geq 2} a_n (f_n(0) + \omega^n g_n(0)) .
\label{eq53}
\end{equation}
As \eqref{eq53} holds for all $\omega\in \t$, we get $a=0$, a contradiction.
\end{proof}
By Theorem \ref{thm44} $(\Delta,T)$ does have the norm preserving extension
property. One can also write down a simple formula to see this.
If $f$ is given by \eqref{eq41} and  has norm one and is not constant, let $a = f(0)$.
Then the function
\[
F(\l_1, \l_2) \ =\ m_a \left( m_a(f(\l_1,0) + m_a(f(0,\l_2) \right) 
\]
is a norm-preserving extension of $f$.

The singularity in $T$ does not prevent the existence of np pairs $(G,T)$ with the  linear
 norm preserving extension property.
\begin{thm}
There is a domain of holomorphy $G$ containing $T$ so that $(G,T)$ has the linear
 norm preserving extension property.
\end{thm}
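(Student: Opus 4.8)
The plan is to exhibit an explicit $G$ together with the most naive linear extension and then shrink $G$ until that extension becomes isometric. By Lemma~\ref{lem41} a function $f\in\hinf(T)$ is a pair $(f_1,f_2)$ of bounded holomorphic functions on $\d$ with $f_1(0)=f_2(0)=:c$, and $\|f\|_{\hinf(T)}=\max(\|f_1\|_\infty,\|f_2\|_\infty)$. The candidate operator is the \emph{sum extension}
\[
E(f)(z,w)\ =\ f_1(z)+f_2(w)-c ,
\]
which is manifestly linear in $f$ and restricts to $f$ on $T$. Since $T\subseteq G$, any extension automatically satisfies $\|E(f)\|_{\hinf(G)}\ge\|f\|_{\hinf(T)}$, so the whole problem is to choose a domain of holomorphy $G\supseteq T$ on which the reverse inequality
\[
\sup_{(z,w)\in G}\bigl|f_1(z)+f_2(w)-c\bigr|\ \le\ \max(\|f_1\|_\infty,\|f_2\|_\infty)
\]
holds for every $f$. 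This is the candidate for a \emph{linear} operator precisely because, unlike the $m_a$-based formula that preceded the theorem, it does not depend on $f$ through its value at $0$.

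The second step is to turn this into a one–variable extremal problem. Fixing a point with $|z|=r_1$, $|w|=r_2$ and normalizing $\max(\|f_1\|,\|f_2\|)=1$, the Schwarz--Pick lemma says that, for fixed $c$, the value $f_1(z)$ ranges over the closed pseudohyperbolic disc of radius $r_1$ about $c$, and independently $f_2(w)$ ranges over the disc of radius $r_2$ about $c$. Hence the worst case of $|f_1(z)+f_2(w)-c|$ is the modulus of the centre plus the sum of the two radii; after a rotation reducing to $c=s\in[0,1)$ this becomes an explicit rational quantity $\Phi_{r_1,r_2}(s)$, and the required estimate is exactly $\Phi_{r_1,r_2}(s)\le 1$ for all $s\in[0,1]$. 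A short computation collapses $\Phi$ to
\[
\Phi_{r_1,r_2}(s)\ =\ \frac{s+r_1}{1+r_1s}+\frac{s+r_2}{1+r_2s}-s ,
\]
and clearing denominators in $\Phi-1$ yields a numerator with the known root $s=1$; dividing it out leaves a factor $(1-r_1-r_2)-r_1r_2\,s(s+2)$ which is decreasing in $s$, so the sign is controlled by its value at $s=1$. I expect this extremal computation --- the reduction to $\Phi$ and the factorization --- to be the main obstacle; once it is carried out it produces the clean criterion
\[
r_1+r_2+3\,r_1r_2\ \le\ 1 .
\]

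The final step is bookkeeping. I would set
\[
G\ =\ \set{(z,w)\in\c^2}{\ |z|+|w|+3|z||w|<1} .
\]
Then $E$ maps $\hinf(T)$ isometrically into $\hinf(G)$ by the previous step, and $T\subseteq G$ because points of $T$ have a vanishing coordinate. The domain $G$ is open, balanced and star–shaped about $0$ (hence connected), and it is contained in $\Delta$, so it is consistent with Theorem~\ref{thm44}. It remains to check that $G$ is a domain of holomorphy: it is a complete Reinhardt domain, and in logarithmic coordinates its defining function $e^{x}+e^{y}+3e^{x+y}$ is a sum of exponentials of linear forms, hence convex, so $G$ is logarithmically convex and therefore pseudoconvex. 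This exhibits a pair $(G,T)$ with the linear norm preserving extension property and, in particular, shows that while $(\Delta,T)$ admits no linear isometric extension by Theorem~\ref{thm51}, a slightly smaller balanced domain does.
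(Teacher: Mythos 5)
Your proof is correct, and it is built on the same extension operator as the paper's --- the sum extension $E(\Psi_{f_1,f_2})(\l)=f_1(\l_1)+f_2(\l_2)-f_1(0)$ --- but from there the two arguments genuinely diverge. The paper never exhibits a concrete domain: it defines $G$ as the interior of $\bigcap \set{\l \in \c^2}{\,|E\circ f(\l)| < \|f\|_T}$, the intersection taken over all non-constant $f\in\hinf(T)$, so that $G$ is a domain of holomorphy on which $E$ is isometric essentially by construction, and all the work goes into showing $T\subseteq G$; this is done with soft Schwarz--Pick estimates (Lemma~\ref{lem55}) on an auxiliary neighborhood $H$ of $T$. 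You instead solve the extremal problem exactly: Schwarz--Pick identifies the attainable values of $f_1(\l_1)$ and $f_2(\l_2)$ as pseudohyperbolic (hence Euclidean) discs, Minkowski addition gives the worst case $\Phi_{r_1,r_2}(s)$, and your factorization --- the numerator of $\Phi_{r_1,r_2}-1$ over $(1+r_1s)(1+r_2s)$ equals $-(1-s)\bigl[(1-r_1-r_2)-r_1r_2\,s(s+2)\bigr]$ --- checks out, yielding the criterion $r_1+r_2+3r_1r_2\le 1$. Two small points should be made explicit in a polished write-up: the identification of ``modulus of centre plus sum of radii'' with $\Phi_{r_1,r_2}(s)$ requires that centre to be nonnegative, which holds in your regime because $r_1+r_2<1$ forces $r_1^2+r_2^2<1$; and ``logarithmically convex complete Reinhardt implies domain of holomorphy'' deserves a citation, though the paper itself passes freely between pseudoconvexity and holomorphic convexity. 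What your route buys: an explicit, balanced, sharp domain $\set{(z,w)}{\,|z|+|w|+3|z||w|<1}$ --- the largest complete Reinhardt domain on which this $E$ is isometric --- which in fact contains the paper's auxiliary set $H$, and which makes a nice counterpoint to Theorems~\ref{thm44} and \ref{thm51}: a balanced proper subdomain of $\Delta$ does admit a linear isometric extension operator even though $\Delta$ itself does not. What the paper's route buys: brevity, and the maximal domain attached to this particular operator $E$, inside which your (necessarily smaller) Reinhardt domain sits.
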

\begin{proof}
Define a linear extension operator $E$ by
\[
E(\Psi_{f_1, f_2}) (\l) \ = \ f_1(\l_1) + f_2(\l_2) - f_1(0) .
\]
Define $G$ by
\begin{equation}
\label{eq54}
\notag
G \ = \ \left[ \bigcap_{f \in \hinf(T), \ f \ {\rm non-constant} } \{ \l \in \c^2 | \ | E \circ f (\l) | < \| f \|_T \} 
\right]^\circ .
\end{equation}
By \cite[Cor. II.3.19]{ran}, $G$ is a domain of holomorphy, provided it is non-empty. So we need to prove that
$T \subset G$.
Define $H \subset \d^2$ by
\[
H \ = \ \{
\frac{|\l_2|}{1-|\l_2|} < \frac{1}{2}\frac{1 - |\l_1|}{1+|\l_1|} \}
\cup \{ \frac{|\l_1|}{1-|\l_1|} < \frac{1}{2}\frac{1 - |\l_2|}{1+|\l_2|} \} .
\]
We will show that $H \subseteq G$, and as $T \subset H$, we will be done.

Choose $\l \in H$, and assume without loss of generality that 
 it satisfies the first inequality. Let $f = \Psi_{f_1,f_2}$ be a non-constant function
in $\hol(T)$, and assume $\| f \|_T = 1$. Let $c = |f(0)|$. Then
\begin{eqnarray*}
|E \circ f (\l) | & \ \leq \ & | f_1(\l_1) | + |f_2(\l_2) - f_2(0) |\\
&\leq &
 \frac{c + |\l_1|}{1 + |\l_1| c}
+
\frac{|\l_2|}{1-|\l_2|}(1 - c^2) \\
&\leq &
\frac{c + |\l_1|}{1 + |\l_1| c}
+
\frac{1}{2}\frac{1 - |\l_1|}{1+|\l_1|}
(1 - c^2) \\
& < &
\frac{c + |\l_1|}{1 + |\l_1| c}
+
\frac{1 - |\l_1|}{1+|\l_1|}
(1 - c) \\
& < &
\frac{c + |\l_1|}{1 + |\l_1| c}
+
\frac{1 - |\l_1|}{1+|\l_1|c}
(1 - c) \\
&=& 1.
\end{eqnarray*}
In the second line we used Lemma \ref{lem55}, and in the third the defining inequality for $H$.
We have shown that every point $\l$ in $H$ satisfies $|E \circ f (\l) | < \| f \|_T$ for every non-constant
function in $\hinf(T)$, and since $H$ is open we conclude that $H \subseteq G$, as desired.
\end{proof}

The following lemma is a straightforward consequence of the Schwarz-Pick lemma.

\begin{lem} \label{lem55}
Suppose $g: \d \to \d$ is holomorphic. Then for all $z \in \d$
\begin{eqnarray}
\label{eq56}
\notag
 | g(z) | & \ \leq \ & \frac{|g(0)| + |z|}{1 + |z| |g(0)|} \\
|g(z) - g(0)| & \leq & \frac{|z|}{1-|z|}(1 - |g(0)|^2) .
\label{eq57}
\notag
\end{eqnarray}
\end{lem}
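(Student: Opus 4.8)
The plan is to obtain both inequalities from a single application of the Schwarz--Pick lemma at the base point $0$. Abbreviating $a = g(0)$, Schwarz--Pick gives
\[
\left| \frac{g(z) - a}{1 - \bar a\, g(z)} \right| \ \le \ |z|
\qquad (z \in \d),
\]
and everything that follows is elementary manipulation of this one estimate, so no further function theory is needed.

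For the second inequality I would first clear the denominator to get $|g(z) - a| \le |z|\,|1 - \bar a\, g(z)|$, and then use the identity $1 - \bar a\, g(z) = (1 - |a|^2) + \bar a\,(a - g(z))$. The triangle inequality gives $|1 - \bar a\, g(z)| \le (1 - |a|^2) + |a|\,|g(z) - a|$, so that $|g(z) - a| \le |z|(1 - |a|^2) + |z|\,|a|\,|g(z) - a|$. This is linear in the unknown quantity $|g(z) - a|$; solving it yields $|g(z) - a| \le \frac{|z|(1 - |a|^2)}{1 - |z|\,|a|}$, and the elementary bound $1 - |z|\,|a| \ge 1 - |z|$ then delivers the second inequality of the statement.

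For the first inequality I would read the Schwarz--Pick estimate geometrically: it says that $g(z)$ lies in the pseudohyperbolic disk of radius $|z|$ centred at $a$, i.e. $g(z) = \phi_a(\zeta)$ for some $\zeta$ with $|\zeta| \le |z|$, where $\phi_a(\zeta) = \frac{\zeta + a}{1 + \bar a\, \zeta}$ is the automorphism of $\d$ carrying $0$ to $a$. The modulus $|g(z)|$ is then bounded by the maximum of $|\phi_a(\zeta)|$ over $|\zeta| \le |z|$, and this maximum equals $\frac{|a| + |z|}{1 + |a|\,|z|}$, which is the first inequality of the statement.

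The only step that is not a one-line computation is this last maximization. After rotating so that $a \ge 0$ and using the maximum principle to restrict to the circle $|\zeta| = |z|$, one computes $|\phi_a(|z|e^{i\theta})|^2 = \frac{|z|^2 + a^2 + 2a|z|\cos\theta}{1 + a^2|z|^2 + 2a|z|\cos\theta}$ and checks that this is increasing in $\cos\theta$ (its derivative in $\cos\theta$ has the sign of $(1 - a^2)(1 - |z|^2) > 0$), so the maximum occurs at $\theta = 0$ and equals $\frac{a + |z|}{1 + a|z|}$. I expect this short calculus check --- or, if one prefers to avoid it, invoking the strong triangle inequality for the pseudohyperbolic metric together with the monotonicity of $t \mapsto \frac{t + |a|}{1 + t|a|}$ --- to be the main (and only mildly nonroutine) obstacle; everything else is mechanical.
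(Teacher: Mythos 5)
Your proposal is correct, and it takes essentially the same approach as the paper: the paper offers no written argument beyond the remark that the lemma is a straightforward consequence of the Schwarz--Pick lemma, and your proof is precisely a careful elaboration of that one-line claim. Both steps check out --- the linear-inequality manipulation giving $|g(z)-a| \le \frac{|z|(1-|a|^2)}{1-|z|\,|a|} \le \frac{|z|}{1-|z|}(1-|a|^2)$, and the maximization of $\left|\frac{\zeta+a}{1+\bar a \zeta}\right|$ over $|\zeta|\le |z|$ yielding $\frac{|a|+|z|}{1+|a|\,|z|}$ --- so there is nothing to add.
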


\begin{ques}
Is there a Theorem \ref{exist.thm.20} for the linear 
 norm preserving extension property?
\end{ques}

\section{The Representation of Even Schur Functions on the Bidisc}
\label{sec6}

For use in Section \ref{sec7}, we derive a model and a representation 
for even Schur functions on the bidisc.

A {\em decomposed Hilbert space}
 is a Hilbert space $\m$ together with two orthogonal subspaces
$\m_1$ and $\m_2$ satisfying  $\m=\m_1 \oplus \m_2$.
By \cite{ag90}, a function $\phi: \d^2 \to \c$  is in $\mathscr{S}(\d^2)$ if 
 and only if there exist a separable decomposed Hilbert space $\m=\m_1 \oplus \m_2$,
and a 
 function $v:\d^2 \to \m$ such that
\begin{equation}
\label{eq61}
1-\overline{\phi(\mu)}\phi(\lambda) = \ip{(1- \mu^* \lambda) v_\lambda}{v_\mu}.
\end{equation}
In \eqref{eq61} we write $v_\lambda$ for $v(\l)$, and the operator $\l : \m \to \m$ 
is defined by 
\be
\label{eq611}
\l \ = \ \l_1 I_{\m_1} \oplus \l_2 I_{\m_2} .
\ee
We call \eqref{eq61} a {\em model} for $\phi$.

We say $\phi \in \hol(\d^2)$ is \emph{even} if
\[
\forall_{\lambda \in \d^2}\ \ \phi(-\lambda)=\phi(\lambda).
\]
We let $\hol_{\, \rm even}\ (\d^2)$ denote the even holomorphic functions on $\d^2$,
and  $\mathscr{S}_e(\d^2)$ the ones in Schur class.
\begin{prop}
\label{prop62}
Let $\phi:\d^2 \to \c$. Then $\phi$ is  in $\mathscr{S}_e(\d^2)$ if and only if there exist a separable decomposed Hilbert space $\m=\m_1 \oplus \m_2$, a unitary operator $U$ acting on $\m$, and 
 an even
 function $u:\d^2 \to \m, \l \mapsto u_\l,$ such that
\be
\label{eq63}
1-\overline{\phi(\mu)}\phi(\lambda) = \ip{(1-(\mu U \mu)^* (\lambda U\lambda))u_\lambda}{u_\mu}.
\ee
\end{prop}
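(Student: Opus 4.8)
The plan is to prove both implications through the basic model \eqref{eq61} of \cite{ag90}, handling the easy direction by a lurking--isometry/transfer--function argument and reserving the real work for the construction of $U$ in the converse. It will be convenient to note that, since $1-\mu^*\lambda = (1-\bar\mu_1\lambda_1)I_{\m_1}\oplus(1-\bar\mu_2\lambda_2)I_{\m_2}$, the data of a model \eqref{eq61} is the same as an \emph{Agler decomposition}
\[
1-\overline{\phi(\mu)}\phi(\lambda) \ = \ (1-\bar\mu_1\lambda_1)k_1(\lambda,\mu) + (1-\bar\mu_2\lambda_2)k_2(\lambda,\mu),
\]
where $k_1,k_2$ are positive semidefinite kernels on $\d^2$ (one takes $k_i(\lambda,\mu)=\ip{P_{\m_i}v_\lambda}{v_\mu}$).

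\emph{Sufficiency.} Suppose \eqref{eq63} holds. Since $\|\lambda U\lambda\|\le\|\lambda\|^2<1$ on $\d^2$, setting $\mu=\lambda$ gives $1-|\phi(\lambda)|^2\ge0$, so $|\phi|\le1$. To see that $\phi$ is holomorphic and even I would rearrange \eqref{eq63} into the Gram identity
\[
1 + \ip{(\lambda U\lambda)u_\lambda}{(\mu U\mu)u_\mu} \ = \ \overline{\phi(\mu)}\phi(\lambda) + \ip{u_\lambda}{u_\mu},
\]
run the standard lurking--isometry argument to extend the map sending $(1,(\lambda U\lambda)u_\lambda)$ to $(\phi(\lambda),u_\lambda)$ to a unitary colligation $\left(\begin{smallmatrix}A&B\\ C&D\end{smallmatrix}\right)$ on $\c\oplus\m$, and read off the realization $\phi(\lambda)=A+B(\lambda U\lambda)(I-D(\lambda U\lambda))^{-1}C$. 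This exhibits $\phi$ as a holomorphic function of $\lambda U\lambda$; as $(\lambda U\lambda)=(-\lambda)U(-\lambda)$ it is even, so $\phi\in\mathscr{S}_e(\d^2)$.

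\emph{Necessity.} This is the substantive direction. I would start from any Agler decomposition of the even function $\phi$ and symmetrize: replacing each $k_i(\lambda,\mu)$ by $\tfrac12\bigl(k_i(\lambda,\mu)+k_i(-\lambda,-\mu)\bigr)$ again yields an Agler decomposition, because both $1-\bar\mu_i\lambda_i$ and (by evenness of $\phi$) the left side are invariant under $(\lambda,\mu)\mapsto(-\lambda,-\mu)$; so I may assume each $k_i$ is invariant under the simultaneous sign change. Writing $k_i(\lambda,\mu)=\ip{e^{(i)}_\lambda}{e^{(i)}_\mu}$ in a Hilbert space $\k_i$ and splitting $e^{(i)}_\lambda$ into the $\pm1$--eigenvectors $e^{(i),+}_\lambda,e^{(i),-}_\lambda$ of the involution $e^{(i)}_\lambda\mapsto e^{(i)}_{-\lambda}$, the cross terms vanish by orthogonality of the eigenspaces $\k_i^+\perp\k_i^-$, giving $k_i=p_i+q_i$ with $p_i=\ip{e^{(i),+}_\lambda}{e^{(i),+}_\mu}$ even--even and $q_i=\ip{e^{(i),-}_\lambda}{e^{(i),-}_\mu}$ odd--odd. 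Substituting $k_i=p_i+q_i$ into the Agler decomposition and separating the even--even part from the odd--odd part of the resulting identity yields
\[
\sum_i\bigl(p_i-\bar\mu_i\lambda_i\, q_i\bigr)=1-\overline{\phi(\mu)}\phi(\lambda),
\]
\[
\sum_i q_i=\sum_i\bar\mu_i\lambda_i\, p_i .
\]

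It then remains to package these into \eqref{eq63}. I set $u_\lambda=(e^{(1),+}_\lambda,e^{(2),+}_\lambda)$, which is even, in a decomposed space $\m=\m_1\oplus\m_2$ with each $\m_i\supseteq\k_i^+\oplus\k_i^-$, so that $\ip{u_\lambda}{u_\mu}=\sum_i p_i$; and I put $y_\lambda=(e^{(1),-}_\lambda,e^{(2),-}_\lambda)\in\m$, so $\ip{y_\lambda}{y_\mu}=\sum_i q_i$. If I can arrange $y_\lambda=U\lambda u_\lambda$ for a single unitary $U$ on $\m$, then $(\lambda U\lambda)u_\lambda=\lambda y_\lambda$, hence $\ip{(\lambda U\lambda)u_\lambda}{(\mu U\mu)u_\mu}=\sum_i\bar\mu_i\lambda_i q_i$, and the first displayed relation is exactly \eqref{eq63}. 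The main obstacle is precisely the existence of this \emph{single} unitary carrying the even data $\lambda u_\lambda$ to the odd data $y_\lambda$, and it is here that the second relation does the decisive work: it says $\ip{\lambda u_\lambda}{\lambda u_\mu}=\sum_i\bar\mu_i\lambda_i p_i=\sum_i q_i=\ip{y_\lambda}{y_\mu}$, so the correspondence $\lambda u_\lambda\mapsto y_\lambda$ is inner-product preserving and extends to a unitary between the two generated subspaces of $\m$. Enlarging the $\m_i$ by infinite-dimensional slack so that both subspaces have infinite-dimensional orthocomplements, I extend this partial isometry to a unitary $U$ on $\m$, completing the construction. Thus the compatibility $\sum_i q_i=\sum_i\bar\mu_i\lambda_i p_i$—an automatic by-product of evenness once the decomposition is symmetrized and split by parity—is exactly the Gram-matrix condition that makes $U$ exist; everything else is bookkeeping with the model and the lurking isometry.
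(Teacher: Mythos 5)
Your proof is correct, and at its core it is the same argument as the paper's: decompose the \cite{ag90} model of $\phi$ into even and odd parts, and use the evenness of $\phi$ to produce the Gram condition that lets the correspondence (even data, twisted by $\l$) $\mapsto$ (odd data) extend to a unitary $U$. The execution, however, is genuinely different. The paper keeps the vector-valued model $v$ of \eqref{eq61}, splits it as $v = v_e + v_o$, and runs a single lurking isometry $W$ on $\c\oplus\m\oplus\m$ sending $(1,\l v_o(\l),\l v_e(\l))$ to $(\phi(\l),v_e(\l),v_o(\l))$; matching parity components of this intertwining yields both the partial isometry $U(\l v_e(\l)) = v_o(\l)$ and, after substituting $u_\l = v_e(\l)$, the identity \eqref{eq63} from the fact that $W$ is a partial isometry. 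You work instead at the level of kernels: symmetrize, split by the involution $e_\l \mapsto e_{-\l}$ into even--even kernels $p_i$ and odd--odd kernels $q_i$, and separate the Agler identity by parity into two scalar relations. Your second relation $\sum_i q_i = \sum_i \bar\mu_i\lambda_i p_i$ is precisely the paper's hidden identity $\ip{v_o(\l)}{v_o(\mu)} = \ip{\l v_e(\l)}{\mu v_e(\mu)}$ (your $p_i,q_i$ coincide with the kernels induced by $v_e,v_o$), and you use it directly as the Gram condition defining $U$, after which \eqref{eq63} is your first relation verbatim --- no auxiliary isometry $W$ is needed. What the paper's route buys is that one operator packages both steps; what yours buys is that the two distinct roles evenness plays (existence of $U$, and the final identity) are displayed as separate explicit kernel identities, which is arguably more transparent. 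In the sufficiency direction you also replace the paper's appeal to \cite{amy20} for holomorphy of $\l \mapsto u_\l$ by the transfer-function realization $\phi(\l)=A+B(\l U\l)(I-D(\l U\l))^{-1}C$, which gives holomorphy and evenness in one stroke and is self-contained. Two small points worth stating explicitly: the spaces generated by your kernels are separable because they sit inside the separable model space of \eqref{eq61}, and when extending the partial isometry to a unitary the infinite-dimensional slack must be added to each $\m_i$ separately (as you indicate), so that the enlarged space is still decomposed and $\l$ still acts diagonally on it.
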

\begin{proof}
 Note that
 if  \eqref{eq63}  holds, then it 
 defines $\phi$ 
(up to a unimodular constant), and it is clear that $\phi$ is even and bounded by $1$ in modulus.
So to see it is in $\mathscr{S}(\d^2)$, we must show it is holomorphic. 
This follows from the argument in the proof of \cite[Proposition 4.26]{amy20}, which shows
that $\lambda \mapsto u_\lambda$ is holomorphic.

Conversely, suppose that $\phi$ is an even function in $\mathscr{S}(\d^2)$. Then there is some
model so that \eqref{eq61} holds. Define
\begin{eqnarray*}
v_e(\l) & \ = \ & \frac{1}{2} ( v(\l) + v(-\l) ) \\
v_o(\l) &=&  \frac{1}{2} ( v(\l) - v(-\l) )
\end{eqnarray*}
A lurking isometry argument (see \cite[Sec. 2.4]{amy20}) shows that there is a partial isometry  $W : \c \oplus \m \oplus \m \to \c \oplus \m \oplus \m$ 
satisfying
\be
\label{eq64}
W :
\begin{pmatrix}
1 \\
\l v_o(\l) \\
\l v_e(\l)
\end{pmatrix} \
\mapsto \
\begin{pmatrix}
\phi(\l) \\
 v_e(\l) \\
 v_o(\l)
\end{pmatrix}
\ee
Expanding both sides of \eqref{eq64} in powers of $\l$, we see that $W$ has to map the odd part to the odd part, so 
\be
\label{eq65}
W \begin{pmatrix}0 \\ 0 \\ \l v_e(\l)) \end{pmatrix} =\begin{pmatrix} 0 \\ 0 \\ v_o(\l)  \end{pmatrix}. 
\ee 
Thus we see that $U(\l v_e(\l)) = v_o (\l), $ for some partial isometry $U:\m\to \m$.
By adding a separable infinite dimensional Hilbert space to $\m$ if necessary, one can additionally assume that the
partial isometry $U$ is a unitary.

Define $u_\l = v_e (\l)$. Substituting this into \eqref{eq64} and using \eqref{eq65}, we get
\be
\label{eq66}
W :
\begin{pmatrix}
1 \\
\l U \l u_\l \\
\l u_\l
\end{pmatrix} \
\mapsto \
\begin{pmatrix}
\phi(\l) \\
u_\l \\
 U \l u_\l
\end{pmatrix}
\ee
Again we expand \eqref{eq66} into odd and even parts, and get
\be
\label{eq67}
W :
\begin{pmatrix}
1 \\
\l U \l u_\l \\0
\end{pmatrix} \
\mapsto \
\begin{pmatrix}
\phi(\l) \\
u_\l \\ 0
\end{pmatrix}
\ee
Since $W$ is  a partial isometry, we get that for any $\l,\mu$ in $\d^2$ we have
\be
\notag
\Big\langle
\begin{pmatrix}
1 \\
\l U \l u_\l \\ 0
\end{pmatrix} ,
\begin{pmatrix}
1 \\
\mu U \mu u_\mu \\0 
\end{pmatrix} \Big\rangle_{\c\oplus \m \oplus \m}
\ = \ \Big\langle
\begin{pmatrix}
\phi(\l) \\
u_\l \\0
\end{pmatrix} ,
\begin{pmatrix}
\phi(\mu) \\
u_\mu \\0
\end{pmatrix} \Big\rangle_{\c\oplus \m \oplus \m},
\ee 
which rearranges into \eqref{eq63}.
\end{proof}

Let $\m$ be a Hilbert space. We define $\ball[ \b (\m)]$ by
\[
\ball[ \b(\m)] = \set{X\in \b(\m)}{\norm{X}<1}.
\]
We let $\mathscr{R}(\m)$ denote the set of 4-tuples  $\xi=(a,\beta, \gamma,D)$ such that $a\in \c$, $\beta \in \m$, $\gamma \in \m$,  $D\in \b(\m)$, and the block operator $L_\xi \in \b(\c \oplus \m)$ defined by
\be
\label{eq68}
L_\xi = \begin{bmatrix}
          a & 1\otimes \beta\\
          \gamma \otimes 1 & D
        \end{bmatrix}
\ee
is unitary.
If  $\xi\in \mathscr{R}(\m)$, then we may define a function $F_\xi$ on $\ball[\b(\m)]$ by the formula
\be
\label{eq681}
F_\xi(X) = a+\ip{X(1-DX)^{-1}\gamma}{\beta}_\m,\qquad X\in \ball[\b(\m)]
\ee
\begin{prop}
\label{prop69}
$\phi \in \ess_e(\d^2)$ if and only if there exists a separable decomposed Hilbert space  
 $\m=\m_1 \oplus \m_2$, a unitary $U\in \b(\m)$, and $\xi \in \R(\m)$ such that
\[
\forall_{\lambda \in \d^2}\ \ \phi(\lambda)=F_\xi(\lambda U \lambda).
\]
\end{prop}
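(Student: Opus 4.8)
The plan is to use Proposition~\ref{prop62} as a bridge: it characterizes $\ess_e(\d^2)$ through the model \eqref{eq63}, while the realization $\phi(\lambda)=F_\xi(\lambda U \lambda)$ is simply the transfer-function form of that same model. Both implications therefore reduce to passing between the model \eqref{eq63} and a unitary colligation of the shape \eqref{eq68}, and I would carry out that passage with a lurking-isometry argument.

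For the forward implication, suppose $\phi \in \ess_e(\d^2)$. Proposition~\ref{prop62} supplies a decomposed space $\m=\m_1\oplus\m_2$, a unitary $U$, and an even $u_\lambda$ satisfying \eqref{eq63}. Writing $Z=\lambda U\lambda$ and $W=\mu U\mu$, I would rearrange \eqref{eq63} into the Gramian identity
\[
\Big\langle \begin{pmatrix} 1 \\ Z u_\lambda\end{pmatrix}, \begin{pmatrix} 1 \\ W u_\mu\end{pmatrix}\Big\rangle_{\c \oplus \m} = \Big\langle \begin{pmatrix}\phi(\lambda) \\ u_\lambda\end{pmatrix}, \begin{pmatrix}\phi(\mu) \\ u_\mu\end{pmatrix}\Big\rangle_{\c \oplus \m}.
\]
This shows that $(1,Zu_\lambda)\mapsto(\phi(\lambda),u_\lambda)$ extends to a well-defined isometry $V$ between the closed spans of these two families in $\c\oplus\m$. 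After enlarging $\m$ if necessary (adjoining an infinite dimensional summand to each of $\m_1$ and $\m_2$ and extending $U$ by the identity there, so that the decomposition and the unitarity of $U$ persist and $\|Z\|<1$ is unaffected), I can extend $V$ to a unitary $L$ on $\c\oplus\m$. Because $\c$ is one dimensional, the off-diagonal corners of $L$ are automatically of the form $\gamma\otimes 1$ and $1\otimes\beta$, so $L=L_\xi$ for some $\xi=(a,\beta,\gamma,D)\in\R(\m)$. Reading off the relation $L_\xi(1,Zu_\lambda)=(\phi(\lambda),u_\lambda)$ in the second coordinate gives $(1-DZ)u_\lambda=\gamma$, hence $u_\lambda=(1-DZ)^{-1}\gamma$ since $\|DZ\|<1$, and in the first coordinate $\phi(\lambda)=a+\ip{Zu_\lambda}{\beta}=F_\xi(Z)=F_\xi(\lambda U\lambda)$, as required.

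For the converse, suppose we are given $\m=\m_1\oplus\m_2$, a unitary $U$, and $\xi\in\R(\m)$ with $\phi(\lambda)=F_\xi(\lambda U\lambda)$. I would set $Z=\lambda U\lambda$ and define $u_\lambda=(1-DZ)^{-1}\gamma$; this is even in $\lambda$ because $(-\lambda)U(-\lambda)=\lambda U\lambda$. A direct check shows $L_\xi(1,Zu_\lambda)=(\phi(\lambda),u_\lambda)$, and since $L_\xi$ is unitary it preserves the inner product of two such vectors; undoing the rearrangement of the previous paragraph recovers precisely the model identity \eqref{eq63}. Proposition~\ref{prop62} then certifies that $\phi\in\ess_e(\d^2)$, with evenness and holomorphy coming along for free through that proposition rather than needing separate verification.

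The main obstacle is the extension step in the forward direction: one must enlarge $\m$ to pass from the partial isometry $V$ to a genuine unitary $L_\xi$ while simultaneously preserving the orthogonal decomposition $\m=\m_1\oplus\m_2$ that defines the operator $\lambda$ in \eqref{eq611}, keeping $U$ unitary, and ensuring the vectors $u_\lambda$ still lie in the embedded copy of the original $\m$. By contrast, the rank-one shape of the off-diagonal blocks demanded by \eqref{eq68} costs nothing, since it is forced by $\dim\c=1$; the only substantive requirement on $\xi$ is the unitarity of $L_\xi$, which the lurking isometry delivers once the defect spaces are matched.
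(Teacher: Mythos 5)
Your proposal is correct and takes essentially the same route as the paper: both pass through Proposition \ref{prop62}, realize the lurking isometry as a unitary colligation $L_\xi$ of the form \eqref{eq68} (the paper reads this off from \eqref{eq67}, you re-derive it from the Gramian form of \eqref{eq63}), and then solve the colligation equations for $\phi$ to obtain $\phi(\lambda)=F_\xi(\lambda U \lambda)$. The details you supply --- enlarging $\m$ to upgrade the partial isometry to a unitary while preserving the decomposition, and the converse verification feeding back into Proposition \ref{prop62} --- are exactly what the paper's terse proof leaves implicit.
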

\begin{proof}
This follows from Proposition \ref{prop62},
where we write $U$ as $L_\xi$  in \eqref{eq68}, and then use \eqref{eq67} to solve for
$\phi$.
\end{proof}
Propositions~\ref{prop62} and \ref{prop69} easily generalize to Schur functions $\phi$ that
satisfy $\phi(\omega \l) = \phi(\l)$, where $\omega$ is a primitive $n^{\rm th}$ root of unity.
One just replaces $\l U \l$ by $\l (U \l)^{n-1}$.


\section{The Representation of Schur Functions on $\calv$}
\label{sec7}

We define a variety in $\d^3$ by
\be
\label{eq71}
\calv=\set{z \in \d^3}{z_3^2 =z_1z_2}
\ee
and let $\pi:\d^2 \to \calv$ denote the surjective map defined by
\[
\pi(\lambda)=(\lambda_1^2,\lambda_2^2,\lambda_1\lambda_2).
\] 
The map $\pi$ is a two-to-one branched cover of $\d^2$ over $\calv$. 
The key observation is that 
\be\label{eq72}
\Phi\in \ess(\calv)\iff \Phi \circ \pi\in \ess_e(\d^2).
\ee

For a separable decomposed Hilbert space $\m=\m_1\oplus \m_2$ and a unitary operator $U\in\b(\m)$ we may decompose $U$,
\[
U=\begin{bmatrix}
    A & B \\
    C & D
  \end{bmatrix},
\]
where $A:\m_1 \to \m_1$, $B:\m_2 \to \m_1$, $C:\m_1 \to \m_2$, and $D:\m_2 \to \m_2$. Furthermore, for each $z\in \c^3$, we may define an operator $z_U$ by the formula
\[
z_U =\begin{bmatrix}
    Az_1 & Bz_3 \\
    Cz_3 & Dz_2
  \end{bmatrix},\qquad z\in \c^3.
\]
We then have that if we view $\lambda$ as an operator on $\m$ (as in \eqref{eq611}) and $z=\pi(\lambda)$, then
\[
\lambda U \lambda = \begin{bmatrix}
    A\lambda_1^2 & B\lambda_1\lambda_2 \\
    C\lambda_1\lambda_2 & D\lambda_2^2
    \end{bmatrix}
    =\begin{bmatrix}
    Az_1 & Bz_3 \\
    Cz_3 & Dz_2
  \end{bmatrix}
  =z_U
\]
Consequently, using Propositions \ref{prop62} and \ref{prop69} we obtain the following description of the Schur class of $\calv$.
\begin{prop}
\label{prop72}
$\Phi \in \ess(\calv)$ if and only if there exists a decomposed Hilbert space  $\m$, a unitary $U\in \b(\m)$, and $\xi \in \R(\m)$ such that
\[
\forall_{\lambda \in \d^2}\ \ \Phi(\lambda)=F_\xi(z_U).
\]
\end{prop}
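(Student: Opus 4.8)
The plan is to read off Proposition~\ref{prop72} directly from the chain of equivalences already established, treating it as essentially a substitution into Proposition~\ref{prop69}. The starting point is the key observation \eqref{eq72}: a function $\Phi$ on $\calv$ lies in $\ess(\calv)$ if and only if its pullback $\Phi\circ\pi$ lies in $\ess_e(\d^2)$. So I would begin by fixing $\Phi\in\ess(\calv)$ and passing to the even Schur function $\phi:=\Phi\circ\pi$ on $\d^2$.

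Next I would invoke Proposition~\ref{prop69} applied to $\phi$: there exist a separable decomposed Hilbert space $\m=\m_1\oplus\m_2$, a unitary $U\in\b(\m)$, and a $4$-tuple $\xi\in\R(\m)$ such that $\phi(\lambda)=F_\xi(\lambda U\lambda)$ for all $\lambda\in\d^2$. The crucial computation is the one displayed just before the proposition, namely that if $z=\pi(\lambda)$ and $\lambda$ is regarded as the operator $\lambda_1 I_{\m_1}\oplus\lambda_2 I_{\m_2}$ of \eqref{eq611}, then $\lambda U\lambda=z_U$, where $z_U$ is the operator built from the block decomposition of $U$. Since $\pi(\lambda)=(\lambda_1^2,\lambda_2^2,\lambda_1\lambda_2)$, this identity is purely a matter of matching entries: $A\lambda_1^2=Az_1$, $B\lambda_1\lambda_2=Bz_3$, and so on. Substituting gives $\phi(\lambda)=F_\xi(z_U)$, i.e. $\Phi(\pi(\lambda))=F_\xi(z_U)$, which is exactly the asserted formula. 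Conversely, given such $\m$, $U$, $\xi$, the map $\lambda\mapsto F_\xi(z_U)=F_\xi(\lambda U\lambda)$ is an even Schur function by Proposition~\ref{prop69}, so it descends through the two-to-one cover $\pi$ to a well-defined element of $\ess(\calv)$ by \eqref{eq72}.

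The one point that genuinely needs care, and which I expect to be the main (albeit minor) obstacle, is the descent step in the converse direction: one must check that $F_\xi(z_U)$ depends only on $z=\pi(\lambda)$ and not on the choice of preimage $\lambda$ among $\{\lambda,-\lambda\}$. This is precisely what the evenness in Proposition~\ref{prop69} guarantees, since $(-\lambda)U(-\lambda)=\lambda U\lambda=z_U$, so $F_\xi(z_U)$ is manifestly a function of $z$ alone; but it is worth stating explicitly that this is why the expression $\Phi(z)=F_\xi(z_U)$ makes sense on $\calv$ rather than only on $\d^2$. Beyond that, the proof is a direct transcription: invoke \eqref{eq72} to move between $\calv$ and the even Schur class, apply Proposition~\ref{prop69}, and substitute the identity $\lambda U\lambda=z_U$. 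No new analytic content is required, which is why the authors can dispatch it in a single sentence citing Propositions~\ref{prop62} and \ref{prop69}.
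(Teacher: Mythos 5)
Your proposal is correct and follows exactly the paper's own route: the paper proves this proposition by combining the observation \eqref{eq72}, the displayed computation $\lambda U\lambda = z_U$ for $z=\pi(\lambda)$, and Propositions \ref{prop62} and \ref{prop69}, which is precisely your argument. Your explicit remark on the descent step (well-definedness on $\calv$ via evenness, since $(-\lambda)U(-\lambda)=\lambda U\lambda$) is a point the paper leaves implicit but is entirely consistent with its one-sentence proof.
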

Let $\mathscr{U}$ denote the collection of ordered pairs $(\m,U)$ such that $\m$ is a separable decomposed Hilbert space and $U$ is a unitary operator acting on $\m$ and define $\calg \subseteq \c^3$ by ordaining that $z \in \calg$ precisely when
\[
\norm{z_U}<1
\]
for all separable decomposed Hilbert spaces $\m$ and all unitary $U$ acting on $\m$. Thus,
\be\label{rep.10}
z \in \calg \iff \forall_{(\m,U) \in \u}\ \ \norm{z_U} <1.
\ee

It is clear from the definition that $\calg$ is convex, and it
 follows from 
 Lemma~\ref{lem40}
  that $\calg$ is open.
\begin{prop}
\label{prop72}
If $\Phi \in \ess(\calv)$, then there exists $\Phi^\sim \in \ess(\calg)$ such that $\Phi=\Phi^\sim|\calv$.\emph{}
\end{prop}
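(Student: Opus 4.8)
The plan is to transplant a model of $\Phi$ directly onto $\calg$. By Proposition~\ref{prop69}, the equivalence \eqref{eq72}, and the identity $\lambda U\lambda=z_U$ valid when $z=\pi(\lambda)$, each $\Phi\in\ess(\calv)$ admits a representation: there exist a separable decomposed Hilbert space $\m=\m_1\oplus\m_2$, a unitary $U\in\b(\m)$, and a tuple $\xi=(a,\beta,\gamma,D)\in\R(\m)$ with
\[
\Phi(\pi(\lambda))=F_\xi(z_U),\qquad z=\pi(\lambda),\ \lambda\in\d^2.
\]
Since $\pi$ is onto $\calv$, this determines $\Phi$ everywhere on $\calv$. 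I would then define the candidate extension, using the \emph{same} data $(\m,U,\xi)$, by $\Phi^\sim(z)=F_\xi(z_U)$ for $z\in\calg$.

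First I would check that this is well defined and restricts correctly. The definition of $\calg$ in \eqref{rep.10} forces $\norm{z_U}<1$ for \emph{every} $(\m,U)\in\u$, in particular for the specific $U$ above; hence $z_U\in\ball[\b(\m)]$ and $F_\xi(z_U)$ is given by \eqref{eq681}. The same definition yields $\calv\subseteq\calg$: if $z=\pi(\lambda)\in\calv$, then for any $(\m',U')\in\u$ we have $z_{U'}=\lambda' U'\lambda'$ with $\lambda'=\lambda_1 I_{\m_1'}\oplus\lambda_2 I_{\m_2'}$, so $\norm{z_{U'}}\le\norm{\lambda'}^2=\max(|\lambda_1|,|\lambda_2|)^2<1$. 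Thus $\Phi^\sim$ is defined on a set containing $\calv$, and for $z=\pi(\lambda)\in\calv$ it satisfies $\Phi^\sim(z)=F_\xi(z_U)=\Phi(\pi(\lambda))=\Phi(z)$, so $\Phi^\sim|\calv=\Phi$. Holomorphy is routine: $z\mapsto z_U$ is linear $\c^3\to\b(\m)$, and since $\norm{D}\le1$ (a corner of a unitary) the resolvent $(1-DX)^{-1}$ exists and is holomorphic on $\ball[\b(\m)]$, so $\Phi^\sim$ is holomorphic on $\calg$.

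The only step requiring a genuine argument, and the one I expect to be the crux, is the bound $|\Phi^\sim|\le1$ on $\calg$; it amounts to the standard fact that the transfer function of a unitary colligation is contractive on the operator ball. For $X\in\ball[\b(\m)]$ set $y=(1-DX)^{-1}\gamma$. Reading off the blocks of $L_\xi$ from \eqref{eq68}, and using $\gamma+DXy=y$, one obtains
\[
L_\xi\begin{pmatrix}1\\ Xy\end{pmatrix}=\begin{pmatrix}F_\xi(X)\\ y\end{pmatrix}.
\]
As $L_\xi$ is unitary, hence isometric, $1+\norm{Xy}^2=|F_\xi(X)|^2+\norm{y}^2$, so
\[
|F_\xi(X)|^2=1-\ip{(1-X^*X)y}{y}\le1,
\]
the inequality following from $1-X^*X\ge0$. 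Taking $X=z_U$ gives $|\Phi^\sim(z)|\le1$ for all $z\in\calg$.

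Combining the three verifications, $\Phi^\sim\in\ess(\calg)$ and $\Phi^\sim|\calv=\Phi$, which is the assertion. The essentially mechanical nature of the argument reflects the design of $\calg$ in \eqref{rep.10}: quantifying $\norm{z_U}<1$ over \emph{all} $(\m,U)$ is exactly what guarantees that whatever model happens to represent $\Phi$, the formula $z\mapsto F_\xi(z_U)$ lands in $\ball[\b(\m)]$ and so defines a Schur function on $\calg$.
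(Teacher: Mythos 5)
Your proposal is correct and follows essentially the same route as the paper: take the realization $\Phi(\pi(\lambda))=F_\xi(z_U)$ from Proposition~\ref{prop69} via \eqref{eq72}, and define $\Phi^\sim(z)=F_\xi(z_U)$ on $\calg$, which the definition \eqref{rep.10} of $\calg$ makes legitimate. The only difference is that where the paper cites the literature for the contractivity of $F_\xi$ on the operator ball, you carry out the standard unitary-colligation calculation explicitly (and also verify $\calv\subseteq\calg$), which is a welcome filling-in of details rather than a different argument.
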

\begin{proof}
Since $\Phi \in \ess(\calv)$ it has a realization as in Proposition \ref{prop72}.
Define $\Phi^\sim$ by $\Phi^\sim (z) = F_\xi(z_U).$
It is immediate from \eqref{eq681} that this defines an analytic function;
to show it is in the Schur class requires a calculation. See eg \cite[Sec. 3.9]{amy20}.

\end{proof}

\section{The Envelope}
\label{sec8}

In this section we compute $\calg$ defined by \eqref{rep.10}. For $z\in \c^3$, define $\norm{z}_\u$ by
\[
\norm{z}_\u = \sup_{(\m,U) \in \u}\norm{z_U}.
\]
\begin{lem}\label{env.lem.10}
If $z\in \c^3$, then
\[
z \in \calg \iff \norm{z}_\u <1.
\]
\end{lem}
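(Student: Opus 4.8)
The plan is to prove the two implications separately. The forward implication is immediate: if $\norm{z}_\u<1$, then $\norm{z_U}\le \norm{z}_\u<1$ for every $(\m,U)\in\u$, so $z\in\calg$ by the defining relation \eqref{rep.10}. For the reverse implication, suppose $z\in\calg$, so $\norm{z_U}<1$ for all $(\m,U)\in\u$. Then at once $\norm{z}_\u=\sup_{(\m,U)}\norm{z_U}\le 1$, and the whole content of the lemma is to rule out the possibility $\norm{z}_\u=1$. I would do this by showing that the supremum defining $\norm{z}_\u$ is \emph{attained}: if some $(\m_0,U_0)\in\u$ has $\norm{z_{U_0}}=\norm{z}_\u$, then $\norm{z}_\u=1$ forces $\norm{z_{U_0}}=1$, contradicting $z\in\calg$, and hence $\norm{z}_\u<1$.

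The heart of the argument is thus the attainment of $\sup_{(\m,U)}\norm{z_U}$, and the key device is to rewrite the relevant inner products as traces. Fix $(\m,U)\in\u$ and unit vectors $\xi=\xi_1\oplus\xi_2$, $\eta=\eta_1\oplus\eta_2$ in $\m=\m_1\oplus\m_2$. Set $p=\xi_1\oplus 0$, $q=0\oplus\xi_2$, $r=\eta_1\oplus 0$, $s=0\oplus\eta_2$, and $r'=\bar z_1 r+\bar z_3 s$, $s'=\bar z_3 r+\bar z_2 s$. Expanding $\ip{z_U\xi}{\eta}$ using the block form of $z_U$ and regrouping the four terms according to whether $U$ is applied to $p$ or to $q$ gives
\[
\ip{z_U\xi}{\eta}\ =\ \ip{Up}{r'}+\ip{Uq}{s'}\ =\ \mathrm{Tr}(UT),
\]
where $T$ is the operator of rank at most two defined by $Tx=\ip{x}{r'}p+\ip{x}{s'}q$. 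The crucial feature is that $T$ depends only on $z$ and on the four vectors $\xi_1,\xi_2,\eta_1,\eta_2$, and that its range and co-range lie in $\spn\{p,q\}$ and $\spn\{r,s\}$ respectively; combining these, the whole configuration fits inside a copy of $\c^2\oplus\c^2$.

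I would then invoke the trace-norm duality $\sup_{U\ \mathrm{unitary}}|\mathrm{Tr}(UT)|=\norm{T}_1$, where the supremum is attained by a unitary extension of the partial isometry in the polar decomposition of the finite-rank $T$. Combining this with the displayed identity and taking suprema yields
\[
\norm{z}_\u\ =\ \sup_{(\m,U)}\ \sup_{\xi,\eta}\ |\ip{z_U\xi}{\eta}|\ =\ \sup_{\xi,\eta}\ \norm{T}_1 ,
\]
and, since every $T$ that occurs is supported on a copy of $\c^2\oplus\c^2$, this last supremum is unchanged if one restricts $\m$ to be $\c^2\oplus\c^2$. On that fixed finite-dimensional space the unitary group and the unit spheres in which $\xi,\eta$ range are compact, and $(U,\xi,\eta)\mapsto|\ip{z_U\xi}{\eta}|$ is continuous, so the supremum $\norm{z}_\u$ is attained at some $(U_0,\xi_0,\eta_0)$. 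The pair $(\c^2\oplus\c^2,U_0)\in\u$ then satisfies $\norm{z_{U_0}}\ge |\ip{z_{U_0}\xi_0}{\eta_0}|=\norm{z}_\u$, hence $\norm{z_{U_0}}=\norm{z}_\u$, which is the attainment I needed.

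I expect the main obstacle to be precisely this attainment step: the supremum runs over the non-compact family of all separable decomposed Hilbert spaces and all unitaries on them, and it is the trace-norm duality together with the rank-at-most-two structure of $T$ that collapses the problem to a compact one in fixed dimension. (Alternatively, granting the openness of $\calg$ recorded above, one can argue more softly: $\norm{\cdot}_\u$ is a genuine norm on $\c^3$, one has $\{\,\norm{z}_\u<1\,\}\subseteq\calg\subseteq\{\,\norm{z}_\u\le 1\,\}$, and an open set trapped between the open and closed unit balls of a norm cannot contain a point of the unit sphere, since homogeneity would place nearby points of norm exceeding one inside it; this again gives $\calg=\{\,\norm{z}_\u<1\,\}$.)
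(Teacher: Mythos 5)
Your proof is correct, but it establishes the key step---attainment of the supremum defining $\norm{z}_\u$---by a genuinely different mechanism than the paper. You and the paper agree on the reduction: once some $(\m^\sim,U^\sim)\in\u$ attains $\sup_{(\m,U)\in\u}\norm{z_U}$, the equivalence $z\in\calg\iff\norm{z}_\u<1$ follows immediately. The paper's attainment argument is a one-line soft trick: take a maximizing sequence $(\m^{(n)},U^{(n)})$, form the countable direct sums $\m^\sim=\bigoplus_n\m^{(n)}$ and $U^\sim=\bigoplus_n U^{(n)}$ (decomposed in the obvious way); since $z_{U^\sim}\cong\bigoplus_n z_{U^{(n)}}$, its norm equals $\sup_n\norm{z_{U^{(n)}}}=\norm{z}_\u$, so the supremum is attained with no finite-dimensionality anywhere. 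Your route---writing $\ip{z_U\xi}{\eta}=\mathrm{Tr}(UT)$ with $T$ of rank at most two, invoking trace duality, and collapsing the whole problem to unitaries on $\c^2\oplus\c^2$, where compactness gives attainment---is heavier, but it buys more: it essentially proves Proposition \ref{prop810}, namely that $\norm{z}_\u=\sup_{(\m,U)\in\u_2}\norm{z_U}$, which the paper obtains separately and by yet other means (Sz.-Nagy dilation in Lemma \ref{env.lem.20}, compression to a $2+2$-dimensional subspace in Lemma \ref{env.lem.30}, and an extreme-point argument in Proposition \ref{prop810}). So your single argument does the work of Lemma \ref{env.lem.10} plus the finite-dimensional reduction at once, whereas the paper's direct-sum trick is shorter but yields no dimension bound. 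One caution on your parenthetical alternative: it presupposes that $\calg$ is open, which the paper deduces only from the explicit inequality of Lemma \ref{lem40}; the proof of that lemma runs through Proposition \ref{prop810} rather than through Lemma \ref{env.lem.10}, so there is no actual circularity, but the dependence is delicate enough that your self-contained main argument (or the paper's direct-sum trick) is clearly preferable.
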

\begin{proof}
The lemma will follow if it can be shown that when $z\in \c^3$, there exists $(\m^\sim,U^\sim)\in \u$ such that
\be\label{env.10}
\norm{z_{U^\sim}}=\sup_{(\m,U) \in \u}\norm{z_U}.
\ee
Choose a sequence $\{(\m^{(n)},U^{(n)})\}$ in $\u$ such that
\[
\norm{z_{U^{(n)}}} \to \sup_{(\m,U) \in \u}\norm{z_U}\ \  \text{ as }\ \  n\to \infty.
\]
Let
\[
\m^\sim=\bigoplus_n \m^{(n)}\qquad \text{ and }\qquad U^\sim =
 \bigoplus_n U^{(n)}.
\]
if we decompose $\m^\sim$,
\[
\m^\sim = \Big(\bigoplus_n (\m_1^{(n)} \oplus 0)\Big)\  \oplus\
\Big(\bigoplus_n (0 \oplus \m_2^{(n)})\Big),
\]
then $(\m^\sim,U^\sim) \in \u$, and as
\[
z_{U^\sim}\  \cong\ \  \bigoplus_n z_{U^{(n)}},
\]
\eqref{env.10} holds.
\end{proof}

We let $\mathscr{C}$ denote the collection of ordered pairs $(\m,T)$ where $\m=\m_1\oplus \m_2$ is a decomposed Hilbert space and $T$ is a contraction acting on $\m$. If $(\m,T) \in \mathscr{C}$, we may decompose $T$,
\be\label{env.20}
T=\begin{bmatrix}
    A & B \\
    C & D
  \end{bmatrix}
\ee
and then for $z\in \c^3$ define $z_T \in \b(\m)$ by
\[
z_T=\begin{bmatrix}
    Az_1 & Bz_3 \\
    Cz_3 & Dz_2
  \end{bmatrix}
\]
We let $\C_2$ denote the collection of ordered pairs $(\m,T) \in \C$ satisfying $\dim \m_1 =\dim \m_2 = 2$.
\begin{lem}\label{env.lem.20}
If $z \in \c^3$, then
\[
\norm{z}_\u \ge \sup_{(\m,T) \in \C_2}\norm{z_T}.
\]
\end{lem}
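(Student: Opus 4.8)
The plan is to establish the inequality one contraction at a time: it suffices to show that for every pair $(\m,T)\in\C_2$ one has $\norm{z_T}\le\norm{z}_\u$, and then take the supremum over $\C_2$. Since $\norm{z}_\u=\sup_{(\m,U)\in\u}\norm{z_U}$ ranges only over \emph{unitaries}, while $T$ is merely a contraction, the natural device is to produce a \emph{unitary dilation} $U$ of $T$, living on a larger decomposed Hilbert space, with the property that $z_T$ is a compression of $z_U$. A norm never increases under compression, so this immediately gives $\norm{z_T}\le\norm{z_U}\le\norm{z}_\u$.

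Concretely, given $(\m,T)\in\C$ with $T=\begin{bmatrix}A&B\\C&D\end{bmatrix}$ relative to $\m=\m_1\oplus\m_2$, I would take the Julia (elementary rotation) operator
\[
U=\begin{bmatrix} T & D_{T^*}\\ D_T & -T^*\end{bmatrix},\qquad D_T=(1-T^*T)^{1/2},\ \ D_{T^*}=(1-TT^*)^{1/2},
\]
acting on $\tilde\m=\m^{(1)}\oplus\m^{(2)}$, where $\m^{(1)},\m^{(2)}$ are two copies of $\m$. The standard intertwining identity $TD_T=D_{T^*}T$ shows $U$ is unitary. The crucial point is the choice of decomposed structure on $\tilde\m$: writing $\m^{(j)}=\m_1^{(j)}\oplus\m_2^{(j)}$, I set
\[
\tilde\m_1=\m_1^{(1)}\oplus\m_1^{(2)},\qquad \tilde\m_2=\m_2^{(1)}\oplus\m_2^{(2)},
\]
so that $(\tilde\m,U)\in\u$ and $\tilde\m$ is separable, being finite dimensional.

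The heart of the argument is to check that compressing $z_U$ to the original copy $\m^{(1)}=\m_1^{(1)}\oplus\m_2^{(1)}$ returns exactly $z_T$. Writing $P_1,P_2$ for the projections onto $\tilde\m_1,\tilde\m_2$, the definition of $z_U$ reads
\[
z_U=z_1\,P_1UP_1+z_3\,P_1UP_2+z_3\,P_2UP_1+z_2\,P_2UP_2 .
\]
Since $\m^{(1)}$ splits compatibly as $\m_1^{(1)}\oplus\m_2^{(1)}$ with $\m_i^{(1)}\subseteq\tilde\m_i$, the projection $Q$ onto $\m^{(1)}$ commutes with $P_1$ and $P_2$; hence $QP_i=P_iQ$ is the projection $Q_i$ onto $\m_i^{(1)}$, and
\[
Q z_U Q=z_1\,Q_1UQ_1+z_3\,Q_1UQ_2+z_3\,Q_2UQ_1+z_2\,Q_2UQ_2 .
\]
Because the $(\m^{(1)},\m^{(1)})$ corner of $U$ is precisely $T$, the four compressions $Q_iUQ_j$ recover the blocks $A,B,C,D$, so $Q z_U Q=z_T$. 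This identifies $z_T$ with a corner of $z_U$.

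The step I expect to be the only real subtlety is this last verification, namely that the coordinate-forming operation $T\mapsto z_T$ commutes with compression to the distinguished corner — which works only because the corner $\m^{(1)}$ respects the $\tilde\m_1/\tilde\m_2$ splitting, making $Q$ commute with $P_1,P_2$. Everything else (unitarity of $U$, separability, the compression norm bound) is routine. With the identity $z_T=Qz_UQ|_{\m^{(1)}}$ in hand, $\norm{z_T}\le\norm{z_U}\le\norm{z}_\u$ for each $(\m,T)\in\C$, in particular for each $(\m,T)\in\C_2$; taking the supremum over $\C_2$ yields the claim.
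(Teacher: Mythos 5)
Your proof is correct and is essentially the paper's argument: both realize $z_T$ as a compression of $z_U$ for a one-step unitary dilation $U$ of the contraction $T$, acting on a larger decomposed Hilbert space whose decomposition places $\m_1$ inside the new first summand and $\m_2$ inside the new second summand, so that compressing $z_U$ back to the original space returns exactly $z_T$, whence $\norm{z_T}\le\norm{z_U}\le\norm{z}_\u$. The only difference is cosmetic: you construct the dilation explicitly as the Julia operator on $\m\oplus\m$, while the paper invokes the Sz.-Nagy dilation theorem to produce a unitary on $\h_0\oplus\m_1\oplus\m_2\oplus\h_1$ with $T$ as its middle corner.
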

\begin{proof}
Fix $(\m,T) \in \C_2$ and assume that $T$ is represented as in \eqref{env.20} with respect to the decomposition $\m=\m_1\oplus \m_2$. By the Sz.-Nagy Dilation Theorem \cite{szn53} there exists a decomposed Hilbert space
\[
\h =\h_0 \oplus \m_1 \oplus \m_2 \oplus \h_1
\]
on which the block operator $U$ defined on $\h$ by
\[
U=\begin{bmatrix}
    X_{11} & X_{12} & X_{13} & X_{14} \\
    0 & A & B & X_{24} \\
    0 & C & D & X_{34} \\
    0 & 0 & 0 & X_{44}
  \end{bmatrix}
\]
is unitary.
If we decompose $\h$ as
\[
\h =(\h_0 \oplus \m_1) \oplus (\m_2 \oplus \h_1),
\]
then
\[
z_U=\begin{bmatrix}
    X_{11}z_1 & X_{12}z_1 & X_{13}z_3 & X_{14}z_3 \\
    0 & Az_1 & Bz_3 & X_{24}z_3 \\
    0 & Cz_3 & Dz_2 & X_{34}z_2 \\
    0 & 0 & 0 & X_{44}z_3
  \end{bmatrix}
\]
and we see that
\[
\norm{z_T}=\norm{\begin{bmatrix}
    Az_1 & Bz_3 \\
    Cz_3 & Dz_2
  \end{bmatrix}}
  \le \norm{z_U} \le \norm{z}_\u.
\]
As $(\m,T)$ is an arbitrary element of $\C_2$, this proves the lemma.
\end{proof}
\begin{lem}\label{env.lem.30}
If $z \in \c^3$, then
\[
\norm{z}_\u \le \sup_{(\m,T) \in \C_2}\norm{z_T}.
\]
\end{lem}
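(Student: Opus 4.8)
The plan is to establish this reverse inequality by a compression argument, using the fact that the operator norm of $z_U$ is computed by testing against \emph{pairs} of unit vectors, and that each such pair spans a subspace of dimension at most two in each of the two summands of $\m$.

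First I would fix an arbitrary $(\m,U)\in\u$ and recall that a unitary is in particular a contraction, so it suffices to bound $\norm{z_U}$ by $\sup_{(\m,T)\in\C_2}\norm{z_T}$ and then take the supremum over $\u$. Write $U=\begin{bmatrix} A & B\\ C& D\end{bmatrix}$ with respect to $\m=\m_1\oplus\m_2$, so that $z_U=\begin{bmatrix} Az_1 & Bz_3\\ Cz_3 & Dz_2\end{bmatrix}$, and express the norm as
\[
\norm{z_U}=\sup\ \big|\ip{z_U(x\oplus y)}{u\oplus v}\big|,
\]
the supremum being over unit vectors $x\oplus y,\ u\oplus v\in\m_1\oplus\m_2$. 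Expanding,
\[
\ip{z_U(x\oplus y)}{u\oplus v}=z_1\ip{Ax}{u}+z_3\ip{By}{u}+z_3\ip{Cx}{v}+z_2\ip{Dy}{v},
\]
so that only the inner products of $A,B,C,D$ against the four chosen vectors enter; in particular only the subspaces $\m_1'=\spn\{x,u\}\subseteq\m_1$ and $\m_2'=\spn\{y,v\}\subseteq\m_2$ are relevant, and each has dimension at most two.

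Next I would compress. Let $P_1,P_2$ be the orthogonal projections of $\m_1,\m_2$ onto $\m_1',\m_2'$, set $\m'=\m_1'\oplus\m_2'$, and let $T'=(P_1\oplus P_2)\,U\,|_{\m'}$. Being a compression of the contraction $U$ to the subspace $\m'$, the operator $T'$ is again a contraction, and its block decomposition relative to $\m'=\m_1'\oplus\m_2'$ is $\begin{bmatrix} P_1AP_1 & P_1BP_2\\ P_2CP_1 & P_2DP_2\end{bmatrix}$. Because $x,u\in\m_1'$ and $y,v\in\m_2'$, the projections act as the identity on the relevant vectors, and a short check shows $\ip{z_{T'}(x\oplus y)}{u\oplus v}=\ip{z_U(x\oplus y)}{u\oplus v}$; hence $\big|\ip{z_U(x\oplus y)}{u\oplus v}\big|\le\norm{z_{T'}}$.

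Finally I would normalize the dimensions. Since $\dim\m_i'\le2$, I embed $\m'$ isometrically into $\c^2\oplus\c^2$ and extend $T'$ by zero on the added coordinates to obtain $(\m'',T'')\in\C_2$; the extension is still a contraction and $\norm{z_{T''}}=\norm{z_{T'}}$, since the new rows and columns of $z_{T''}$ vanish. Thus every test value satisfies $\big|\ip{z_U(x\oplus y)}{u\oplus v}\big|\le\norm{z_{T''}}\le\sup_{(\m,T)\in\C_2}\norm{z_T}$; taking the supremum over the test vectors gives $\norm{z_U}\le\sup_{(\m,T)\in\C_2}\norm{z_T}$, and then the supremum over $(\m,U)\in\u$ yields the lemma. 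The only genuine idea here is the observation that passing to the bilinear form confines all the data to a two-plus-two dimensional subspace, which is exactly what forces the dimension bound in $\C_2$; the remaining verifications (that a compression of a contraction is a contraction, that the bilinear form is preserved, and that the zero padding changes neither contractivity nor the norm of $z_{T''}$) are routine, and I do not expect any real obstacle.
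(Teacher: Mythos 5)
Your proof is correct and follows essentially the same route as the paper: both compress $U$ to a subspace of dimension at most $2+2$ that captures the input and output data (the paper spans $\n_i$ by $P_i\gamma$ and $P_i z_U\gamma$ for a near-maximizing vector $\gamma$, while you span $\m_i'$ by the test vectors appearing in the bilinear form), note the compression is a contraction preserving the relevant quantity, and then pad up to exact dimension $(2,2)$ (the paper by arbitrary contractions $L_1,L_2$, you by zeros). The differences are cosmetic, not structural.
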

\begin{proof}
Fix $z \in \c^3$ and $\epsilon >0$. Choose $(\m,U) \in \u$ such that
\be\label{env.30}
\norm{z}_\u -\epsilon/2 < \norm{z_U}
\ee
and choose $\gamma \in \m$ such that $\norm{\gamma}=1$ and
\be\label{env.40}
\norm{z_U} -\epsilon/2 <\norm{z_U \gamma}.
\ee
If we let $P_1$ and $P_2$ denote the orthogonal projections of $\m$ onto $\m_1$ and $\m_2$, respectively, and define
\[
\n_1 =\spn \{P_1\gamma, P_1 z_U\gamma\}\ \  \text{ and }\ \  \n_2 =\spn \{P_2\gamma, P_2 z_U\gamma\},
\]
 then $\n_1 \perp \n_2$, i.e., $\n=\n_1 +\n_2$ is a decomposed Hilbert space. Furthermore, if we define $T\in \b(\n)$ by letting $T=P_\n U|\n$, then $(\n,T) \in \C$, $\gamma \in \n$, and
 \[
z_T \gamma =z_U \gamma.
 \]
 Hence, using \eqref{env.30} and \eqref{env.40} it follows that
 \be\label{env.50}
 \norm{z}_\u -\epsilon < \norm{z_T \gamma} \le \norm{z_T}.
 \ee

Now, it might be the case that either $\dim \n_1$ or $\dim \n_2$ is strictly less than 2, i.e., $(\n,T)\not\in \C_2$. However, we may choose Hilbert spaces $\l_1$ and $\l_2$ such that $\dim (\l_1 \oplus \n_1) = 2$ and $\dim (\n_2 \oplus \l_2) = 2$ and then define $(\n^\sim,T^\sim) \in \C_2$ in the following way. Let
\[
\n^\sim = (\l_1 \oplus \n_1) \oplus (\n_2 \oplus \l_2),
\]
and define $T^\sim$ by choosing contractions $L_1\in \b(\l_1)$ and $L_2 \in \b(\l_2)$ and letting
\[
T^\sim \big((x \oplus u) \oplus (v\oplus y)\big)= L_1 x \oplus (T (u \oplus v)) \oplus L_2 y,
\qquad x \in \l_1, y\in \l_2,  u \in \n_1,v \in \n_2.
\]
Here, we have identified the two spaces
\[
(\l_1 \oplus \n_1) \oplus (\n_2 \oplus \l_2)\ \ \text{ and }\ \
\l_1 \oplus (\n_1 \oplus \n_2) \oplus \l_2.
\]
With this definition it follows that
\[
z_{T^\sim} \cong (z_1L_1) \oplus z_T \oplus z_2 L_2.
\]
Therefore,
\[
\norm{z_T} \le \norm{z_{T^\sim}},
\]
which implies via \eqref{env.50} that
\be\label{env.60}
 \norm{z}_\u -\epsilon < \norm{z_{T^\sim}}.
\ee

Summarizing, we have shown that if $\epsilon>0$, then there exists $(\n^\sim,T^\sim) \in \C_2$ such that \eqref{env.60} holds. This proves the lemma.
\end{proof}
Finally we show that to check if $z \in \calg$, you just need to check that $\| z_U \| < 1$ for every unitary on 
a decomposed Hilbert space of dimension $2+2$.
We  let $\u_2$ denote the collection of ordered pairs $(\m,U) \in \u$ satisfying $\dim \m_1 =\dim \m_2 = 2$. 
\begin{prop}
\label{prop810}
If $z \in \c^3$, then
\[
\norm{z}_\u = \sup_{(\m,U) \in \u_2}\norm{z_U}.
\]
\end{prop}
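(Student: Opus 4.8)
The plan is to combine Lemmas~\ref{env.lem.20} and \ref{env.lem.30}, which together assert that $\norm{z}_\u = \sup_{(\m,T) \in \C_2}\norm{z_T}$, with the observation that in this supremum the contractions may be replaced by unitaries without changing the value. Thus everything reduces to proving
\[
\sup_{(\m,T) \in \C_2}\norm{z_T} \ = \ \sup_{(\m,U) \in \u_2}\norm{z_U}.
\]
The inequality $\ge$ is immediate, since every unitary is a contraction and hence $\u_2 \subseteq \C_2$. The content is in the reverse inequality.

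For the reverse inequality I would fix a decomposed Hilbert space $\m = \m_1 \oplus \m_2$ with $\dim\m_1 = \dim\m_2 = 2$; every member of $\C_2$ and of $\u_2$ lives on such an $\m$, and all such spaces are unitarily equivalent, so one may as well take $\m = \c^2 \oplus \c^2$. The crucial structural fact is that the assignment $T \mapsto z_T$ is \emph{linear}: writing $T$ in block form as in \eqref{env.20}, each block $A,B,C,D$ enters $z_T$ multiplied by a fixed scalar ($z_1, z_3, z_3, z_2$ respectively), so $z_{sT + (1-s)T'} = s\,z_T + (1-s)\,z_{T'}$. Consequently $T \mapsto \norm{z_T}$ is a continuous \emph{convex} function on the set of contractions on $\m$. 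Since $\m$ is finite dimensional, this set is compact and convex, so by the Bauer maximum principle the convex function attains its maximum at an extreme point.

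The decisive step, and the one requiring an external input, is the identification of these extreme points. By Kadison's description of the extreme points of the unit ball of $\b(\m)$, they are the maximal partial isometries, which on a finite-dimensional space are precisely the unitaries. Hence the maximum of $\norm{z_T}$ over contractions on $\m$ is attained at a unitary $U$, giving $\sup_{(\m,T)\in\C_2}\norm{z_T} \le \sup_{(\m,U)\in\u_2}\norm{z_U}$, which together with the trivial inequality yields equality and, via Lemmas~\ref{env.lem.20} and \ref{env.lem.30}, the proposition. I expect this extreme-point identification to be the one genuinely non-routine ingredient; it is exactly here that the finiteness of the dimensions is essential, since in infinite dimensions the unit ball has extreme points that are non-unitary isometries and the two suprema could fail to coincide.
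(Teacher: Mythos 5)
Your proposal is correct and follows essentially the same route as the paper: both reduce to the supremum of $\norm{z_T}$ over the closed unit ball of $\b(\m)$ for a fixed $\m = \c^2 \oplus \c^2$ via Lemmas~\ref{env.lem.20} and \ref{env.lem.30}, observe that $T \mapsto \norm{z_T}$ is convex so its maximum is attained at an extreme point, and use that the extreme points of the unit ball of $\b(\m)$ in finite dimensions are the unitaries. You merely make explicit some steps the paper leaves tacit (linearity of $T \mapsto z_T$, compactness, and the Kadison/Bauer justifications).
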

\begin{proof}
Fix a 2 dimensional Hilbert space $\n$, let $\m=\n\oplus \n$, and let $\ball \b(\m)$ denote the closed unit ball of $\b(\m)$. By Lemmas \ref{env.lem.20} and \ref{env.lem.30}
\[
\norm{z}_\u = \sup_{T\in \ball \b(\m)}\norm{z_T}.
\]
But the map $T \mapsto \norm{z_T}$ is convex and therefore attains its maximum at an extreme point of $\ball \b(\m)$. As the extreme points of $\ball \b(\m)$ are the unitaries, it follows that there exists $(\m,U) \in \u_2$ such that $\norm{z_U} = \norm{z}_\u$.
\end{proof}
Now we shall derive an inequality that defines $\calg$.

\begin{lem}
\label{lem.10}
$
\begin{bmatrix}
A&B\\C&D
\end{bmatrix}$
 is a block unitary acting on $ \c^2 \oplus \c^2$ if and only if
 there exist 3 unitary $2\times 2$ matrices $u,v,w$ such that
\be\label{35}
\begin{bmatrix}
A&B\\C&D
\end{bmatrix}=
\begin{bmatrix}
u&0\\0&v
\end{bmatrix}
\begin{bmatrix}
a&b\\c&d
\end{bmatrix}
\begin{bmatrix}
1&0\\0&w
\end{bmatrix}
\ee
where
\be\label{40}
\begin{bmatrix}
a&b\\c&d
\end{bmatrix}
\text{ is a block unitary acting on } \c^2 \oplus \c^2
\ee
and
\be\label{50}
a,b,c \ge 0.
\ee
\end{lem}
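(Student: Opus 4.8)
The plan is to reduce both directions of the equivalence to the existence and elementary properties of the polar decomposition of $2 \times 2$ matrices. The reverse implication is immediate: the factors $\begin{bmatrix} u&0\\0&v\end{bmatrix}$ and $\begin{bmatrix} 1&0\\0&w\end{bmatrix}$ are unitary, and by hypothesis \eqref{40} the middle factor is unitary, so the product in \eqref{35} is a unitary on $\c^2 \oplus \c^2$. All the content is therefore in the forward direction.

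For the forward implication, suppose $\begin{bmatrix} A&B\\C&D\end{bmatrix}$ is a block unitary. First I would polar-decompose $A$, writing $A = ua$ with $u$ unitary and $a = (A^*A)^{1/2} \ge 0$; next polar-decompose $C = vc$ with $v$ unitary and $c = (C^*C)^{1/2} \ge 0$. With $u$ now fixed, I would take the \emph{left} polar decomposition of $u^*B$, namely $u^* B = bw$ with $b = (u^* B B^* u)^{1/2} \ge 0$ and $w$ unitary. Setting $d = v^* D w^*$, a direct block computation gives
\[
\begin{bmatrix} u^*&0\\0&v^*\end{bmatrix}\begin{bmatrix} A&B\\C&D\end{bmatrix}\begin{bmatrix} 1&0\\0&w^*\end{bmatrix}
= \begin{bmatrix} u^*A & u^*Bw^* \\ v^*C & v^*Dw^*\end{bmatrix}
= \begin{bmatrix} a&b\\c&d\end{bmatrix},
\]
which, rearranged, is exactly \eqref{35}. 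The entries $a,b,c$ are positive semidefinite by construction, yielding \eqref{50}; and the middle matrix is a product of three unitaries (the two block-diagonal unitaries and the original block unitary), hence itself unitary, yielding \eqref{40}. The crux of why this works is that no condition is imposed on the fourth block $d$: leaving $d$ free is precisely what allows the three polar decompositions to be carried out independently without over-determining the system, and it is mirrored in the asymmetric placement of the identity in the top-left corner of the right-hand factor.

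The one point requiring care is that $A$, $C$, or $u^* B$ may fail to be invertible, in which case the unitary polar factor is not uniquely determined. I would dispose of this by recalling that in finite dimensions the partial isometry arising in a polar decomposition can always be completed to a genuine unitary, so that $u$, $v$, and $w$ exist as unitaries irrespective of invertibility. This is the only technical subtlety; the remainder is routine matrix algebra, so I expect the writeup to be short.
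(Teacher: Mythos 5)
Your proof is correct and follows essentially the same route as the paper: polar decompositions $A = ua$, $C = vc$, $B = ubw$ (your left polar decomposition of $u^*B$ is exactly this), then $d = v^*Dw^*$, with the middle factor unitary as a product of unitaries and the converse immediate. Your explicit remark that the partial isometries in the polar decompositions can be completed to unitaries in finite dimensions is a detail the paper leaves tacit, but otherwise the two arguments coincide.
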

\begin{proof}
 Using polar decomposition there exist unitary $u,v$ and $a,c\ge 0$ such that
\[
A=ua\ \text { and }\  C=vc.
\]
Again using polar decomposition, there exists a unitary $w$ and $b\ge 0$ such that
\[
B=ubw.
\]
If we set $d=v^*Dw^*$, then \eqref{35} holds by direct  computation, and, as the product of unitaries is unitary, \eqref{40} holds as well.
The converse is immediate.
\end{proof}
\begin{lem}\label{lem.20}
\eqref{40} and \eqref{50} hold if and only if there exists a unitary $2\times 2$ matrix $u$ and scalars $r\le s$ in $[0,1]$ such that
\be\label{60}
\begin{bmatrix}
u^*&0\\0&u^*
\end{bmatrix}
\begin{bmatrix}
a&b\\c&d
\end{bmatrix}
\begin{bmatrix}
u&0\\0&u
\end{bmatrix}
=
\begin{bmatrix}
r&0&\sqrt{1-r^2}&0\\
0&s&0&\sqrt{1-s^2}\\
\sqrt{1-r^2}&0&x_{11}&x_{12}\\
0&\sqrt{1-s^2}&x_{21}&x_{22}
\end{bmatrix}
\ee
where
\be\label{70}
\begin{bmatrix}
x_{11}&x_{12}\\x_{21}&x_{22}
\end{bmatrix}
=
\threepartdef{
\begin{bmatrix}
-r&0\\0&-s
\end{bmatrix}}{s<1,}
{\begin{bmatrix}
-r&0\\0&\tau
\end{bmatrix},\text{ where } |\tau|=1,} {r<s=1,}
{V,\text{ where $V$ is unitary, }}{r=s=1.}
\ee
\end{lem}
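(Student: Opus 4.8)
The plan is to prove both implications by reducing everything to the commuting positive pair $a,c$ and exploiting uniqueness of positive square roots. For the forward direction, assume \eqref{40} and \eqref{50} hold and write $U=\begin{bmatrix} a&b\\c&d\end{bmatrix}$. Using $a=a^*$, $b=b^*$, $c=c^*$, I would first extract from $U^*U=I$ and $UU^*=I$ the block relations $a^2+c^2=I$, $a^2+b^2=I$, $ab+cd=0$, and $b^2+d^*d=I$. Comparing the first two gives $c^2=b^2$, so $b=c$ by uniqueness of the positive square root; and $a^2+c^2=I$ exhibits $a=(I-c^2)^{1/2}$ as a function of $c$, so $a$ and $c$ commute. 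Being commuting positive Hermitian $2\times2$ matrices, $a$ and $c$ are simultaneously diagonalized by some unitary $u$, and ordering the eigenvectors I may take $u^*au=\mathrm{diag}(r,s)$ with $r\le s$ in $[0,1]$; then $u^*cu=u^*bu=\mathrm{diag}(\sqrt{1-r^2},\sqrt{1-s^2})$. Conjugating the whole matrix by $\mathrm{diag}(u,u)$ therefore produces precisely the block pattern on the right of \eqref{60}, with the top-right block equal to the lower-left block.

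It remains to identify the lower-right block $d':=u^*du$. Conjugating $ab+cd=0$ and $b^2+d^*d=I$ by $u$ yields $c'd'=-a'b'=-\mathrm{diag}(r\sqrt{1-r^2},s\sqrt{1-s^2})$ and $d'^*d'=\mathrm{diag}(r^2,s^2)$, where $a'=\mathrm{diag}(r,s)$ and $b'=c'=\mathrm{diag}(\sqrt{1-r^2},\sqrt{1-s^2})$. When $s<1$ both diagonal entries of $c'$ are nonzero, so the first relation pins down every entry of $d'$ and forces $d'=\mathrm{diag}(-r,-s)$: the first case of \eqref{70}. When $r<s=1$ the second diagonal entry of $c'$ vanishes, so the first relation determines only the top row of $d'$ (giving $d'_{11}=-r$, $d'_{12}=0$), and I must invoke $d'^*d'=\mathrm{diag}(r^2,1)$ to conclude $d'=\mathrm{diag}(-r,\tau)$ with $|\tau|=1$: the second case. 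When $r=s=1$ both off-diagonal blocks vanish and unitarity of the whole matrix collapses to $d'$ being unitary: the third case.

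For the converse I would observe that conjugation by the unitary $\mathrm{diag}(u,u)$ preserves unitarity, so it suffices to check that the explicit matrix on the right of \eqref{60} is unitary in each of the three branches of \eqref{70}; in the first two branches this is the statement that $\begin{bmatrix} t&\sqrt{1-t^2}\\ \sqrt{1-t^2}&-t\end{bmatrix}$ is a reflection, together with $|\tau|=1$, and in the third it is unitarity of $V$. Positivity of the three blocks $a=u\,\mathrm{diag}(r,s)\,u^*$ and $b=c=u\,\mathrm{diag}(\sqrt{1-r^2},\sqrt{1-s^2})\,u^*$ is then immediate since $r,s\ge0$, giving \eqref{40} and \eqref{50}. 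The main obstacle --- and the only place that needs genuine care --- is the degenerate case analysis: once $\sqrt{1-r^2}$ or $\sqrt{1-s^2}$ vanishes the orthogonality relation $ab+cd=0$ stops determining the corresponding part of $d'$, and one must fall back on the remaining unitarity relations to see exactly how much freedom survives. This surviving freedom is precisely what the three branches of \eqref{70} record.
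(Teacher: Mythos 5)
Your proof is correct and takes essentially the same approach as the paper's: diagonalize $a$ by the spectral theorem, use positivity and the block unitarity relations to show $b=c=\sqrt{1-a^2}$ (hence simultaneously diagonal under the same $u$), and then determine $d$ from the remaining relations $ab+cd=0$ and $b^2+d^*d=I$. The only difference is one of detail: you carry out explicitly the three-case analysis for $d$ and the converse verification, which the paper compresses into the phrase ``simple calculations.''
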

\begin{proof}
By the spectral theorem there exists a unitary $2\times 2$ matrix $u$ and scalars $r\le s$ in $[0,1]$ such that
\[
u^*au =
\begin{bmatrix}
r&0\\0&s
\end{bmatrix}
\]
As \eqref{40} holds we have that
\be\label{75}
\begin{bmatrix}
a&c\\b&d^*
\end{bmatrix}\begin{bmatrix}
a&b\\c&d
\end{bmatrix}=
\begin{bmatrix}
1&0\\0&1
\end{bmatrix}=
\begin{bmatrix}
a&b\\c&d
\end{bmatrix}
\begin{bmatrix}
a&c\\b&d^*
\end{bmatrix}
\ee
In particular, $a^2+c^2=1$. Hence, as $c\ge 0$, $c=\sqrt{1-a^2}$. But then
\[
u^*cu=u^*(\sqrt{1-a^2})u=\sqrt{1-(u^*au)^2}=\begin{bmatrix}
\sqrt{1-r^2}&0\\0&\sqrt{1-s^2}.
\end{bmatrix}
\]
Likewise, as $a^2 +b^2 =1$ and $b\ge0$,
\[
u^*bu=\begin{bmatrix}
\sqrt{1-r^2}&0\\0&\sqrt{1-s^2}.
\end{bmatrix}
\]
There remains to show that $d$ has the form described in the statement of the lemma. This follows by simple calculations using the relations
\[
b^2+d^*d= 1= c^2 +dd^*,\ \ ab+cd =0,\ \ \text{ and }\ \  ca +db=0.
\]
\end{proof}
 Combining Proposition \ref{prop810} with Lemmas \ref{lem.10} and \ref{lem.20} yields the following lemma. 
\begin{lem}\label{lem.30}
The point $z \in \d^3$ is in $\calg$
if and only if
\be\label{80}
\sup_{r\in[0,1]}\big\| z_r\big\| < 1
\ee
where
\[
z_r=\begin{bmatrix}
rz_1&\sqrt{1-r^2} z_3\\ \sqrt{1-r^2} z_3&-rz_2
\end{bmatrix}.
\]
\end{lem}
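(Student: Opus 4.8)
Lemma~\ref{lem.30} asserts that $z \in \calg$ if and only if $\sup_{r \in [0,1]} \| z_r \| < 1$, where $z_r$ is the explicit $2\times 2$ matrix built from the eigenvalue $r$ of the block $a$. The plan is to run the chain of reductions already assembled in the section: Proposition~\ref{prop810} replaces the supremum over all decomposed $(\m,U)$ by the supremum over $\u_2$ (unitaries on $\c^2 \oplus \c^2$), and Lemmas~\ref{lem.10} and~\ref{lem.20} give a normal form for such unitaries. So the whole task is to take a block unitary $U$ on $\c^2 \oplus \c^2$, feed its normal form into the definition of $z_U$, and simplify the resulting $4\times 4$ norm down to the $2\times 2$ expression $\| z_r \|$.

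\medskip

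First I would write $z \in \calg \iff \sup_{(\m,U)\in\u_2} \| z_U \| < 1$, which is exactly Proposition~\ref{prop810} (together with Lemma~\ref{env.lem.10}). Next I would apply Lemma~\ref{lem.10} to factor an arbitrary block unitary $U = \left[\begin{smallmatrix} A & B \\ C & D \end{smallmatrix}\right]$ as $\left[\begin{smallmatrix} u & 0 \\ 0 & v \end{smallmatrix}\right]\left[\begin{smallmatrix} a & b \\ c & d \end{smallmatrix}\right]\left[\begin{smallmatrix} 1 & 0 \\ 0 & w \end{smallmatrix}\right]$ with $a,b,c \ge 0$. The key point is that the map $z \mapsto z_U$ respects this block-diagonal conjugation: since $u,v,w$ act only within $\m_1$ or within $\m_2$, one checks that $z_U = \left[\begin{smallmatrix} u & 0 \\ 0 & v \end{smallmatrix}\right] z_{(a,b,c,d)} \left[\begin{smallmatrix} 1 & 0 \\ 0 & w \end{smallmatrix}\right]$, so $\| z_U \| = \| z_{(a,b,c,d)} \|$ because the outer factors are unitary. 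Thus the supremum over $\u_2$ equals the supremum over normal-form unitaries satisfying \eqref{40}--\eqref{50}.

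\medskip

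Then I would invoke Lemma~\ref{lem.20}: conjugating by $\left[\begin{smallmatrix} u & 0 \\ 0 & u \end{smallmatrix}\right]$ (again a block-diagonal unitary, hence norm-preserving on $z_U$ by the same observation) puts the normal-form unitary into the diagonalized shape \eqref{60}, parametrized by $r \le s$ in $[0,1]$ and the case-dependent $d$-block \eqref{70}. Substituting this $4\times 4$ unitary into the definition of $z_U$ and writing $\m_1 = \c^2$, $\m_2 = \c^2$ in the eigenbasis of $a$, the operator $z_U$ becomes a $4\times 4$ matrix whose entries decouple: the $(r\text{-eigenvector})$ coordinates and the $(s\text{-eigenvector})$ coordinates interact only within their own $2\times 2$ blocks. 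Concretely $z_U$ is unitarily equivalent (after a coordinate permutation) to the direct sum $z_r \oplus z_s$, with $z_r$ as in the statement and $z_s$ the analogous matrix with $r$ replaced by $s$. Hence $\| z_U \| = \max(\| z_r \|, \| z_s \|)$, and as $r,s$ range independently over $[0,1]$ the supremum over $\u_2$ collapses to $\sup_{r\in[0,1]} \| z_r \|$. Combining with the first step gives the claimed equivalence.

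\medskip

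\textbf{Main obstacle.} The genuine work is the bookkeeping in the last step: verifying that the $4\times 4$ matrix obtained by plugging \eqref{60}--\eqref{70} into $z_U$ really splits, up to a fixed permutation of basis vectors, as $z_r \oplus z_s$. This requires tracking where the $z_3$ off-diagonal entries land — they come from the $B$ and $C$ blocks, which carry the $\sqrt{1-r^2}$ and $\sqrt{1-s^2}$ factors — and confirming the sign $-r z_2$ in the lower-right entry of $z_r$ emerges from the $x_{11} = -r$ case of \eqref{70}. I would handle the $s < 1$ case first, where \eqref{70} is cleanest, and then note that the boundary cases $s = 1$ (and $r=s=1$) only enlarge the parameter set for $d$ but cannot increase $\sup_{r} \| z_r \|$ beyond what the closed interval already supplies, so they do not affect the final supremum. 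The convexity/extreme-point argument needed to reduce to unitaries has already been done in Proposition~\ref{prop810}, so no new hard analysis is required here.
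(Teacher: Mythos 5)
Your proposal is correct and follows essentially the same route as the paper's proof: Proposition~\ref{prop810} together with Lemmas~\ref{lem.10} and~\ref{lem.20} reduce to the normal form, in which $z_U$ is unitarily equivalent to $z_r \oplus z_s$ when $s<1$, and the boundary cases $s=1$ are checked separately. You merely fill in details (notably the invariance of $\| z_U \|$ under the outer block-diagonal unitary factors) that the paper's very terse proof leaves implicit.
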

\begin{proof}
For $U = 
\begin{bmatrix}
A&B\\C&D
\end{bmatrix}$, we have by Lemma \ref{lem.20} that
$z_U$ is unitarily equivalent to $z_r \oplus z_s$ when $ s < 1$,
so \eqref{80} is necessary. It is easy to check the cases when $s=1$ to see that
the condition is also sufficient.
\end{proof}

\begin{lem}
\label{lem40} The point
$z \in \d^3$ is in $\calg$ if and only if 
\be
\label{eq:38}
|z_1 z_2 - z_3^2| < (1-|z_3|^2) + \sqrt{1-|z_1|^2}\sqrt{1-|z_2|^2}.
\ee
\end{lem}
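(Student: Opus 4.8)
The plan is to start from Lemma~\ref{lem.30}, which reduces membership in $\calg$ to the scalar condition $\sup_{r\in[0,1]}\norm{z_r}<1$, and then to compute this supremum explicitly. Since each $z_r$ is a $2\times 2$ matrix, I would express $\norm{z_r}<1$ through the Hermitian matrix $I-z_r^*z_r$: for a $2\times 2$ Hermitian matrix, positivity is equivalent to having both positive determinant and positive trace. The trace of $I-z_r^*z_r$ equals $2-\operatorname{tr}(z_r^*z_r)$, and a direct bound using $|z_1|,|z_2|,|z_3|<1$ shows $\operatorname{tr}(z_r^*z_r)<2$ for every $r$; hence the trace condition is automatic, and $\norm{z_r}<1$ is equivalent to $\det(I-z_r^*z_r)>0$.

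The second step is the computation. Writing $t=r^2\in[0,1]$ and using $\det(I-z_r^*z_r)=1-\operatorname{tr}(z_r^*z_r)+|\det z_r|^2$, I would expand everything as a polynomial in $t$. With the abbreviations $X=|z_1z_2-z_3^2|$, $Y=1-|z_3|^2$, and $Z=\sqrt{1-|z_1|^2}\,\sqrt{1-|z_2|^2}$, the leading coefficient collapses to $|z_1z_2-z_3^2|^2=X^2$ and one obtains the clean quadratic
\[
P(t):=\det(I-z_r^*z_r)=X^2t^2+(Z^2-X^2-Y^2)\,t+Y^2,
\]
whose endpoint values are exactly $P(0)=Y^2=(1-|z_3|^2)^2$ and $P(1)=Z^2=(1-|z_1|^2)(1-|z_2|^2)$. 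Thus $z\in\calg$ if and only if $P(t)>0$ for all $t\in[0,1]$, and it remains to show this is equivalent to $X<Y+Z$, which is precisely \eqref{eq:38}.

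For the final step I would exploit that $P$ is a \emph{convex} quadratic (leading coefficient $X^2\ge 0$) with strictly positive endpoint values $P(0),P(1)>0$. Its minimum over $[0,1]$ is attained either at an endpoint — where $P>0$ automatically — or at the unconstrained vertex $t^\ast=(X^2+Y^2-Z^2)/(2X^2)$ when $t^\ast\in(0,1)$, where the minimum equals $-\Delta/(4X^2)$ with discriminant
\[
\Delta=(X^2+Y^2-Z^2)^2-4X^2Y^2=\big((X-Y)^2-Z^2\big)\big((X+Y)^2-Z^2\big).
\]
The crux is to check that in the regime $t^\ast\in(0,1)$ — equivalently $|Y^2-Z^2|<X^2$ — the factor $(X+Y)^2-Z^2$ is automatically positive, so that $\Delta<0$ reduces to $(X-Y)^2<Z^2$, i.e.\ $|X-Y|<Z$; and that the companion inequality $X>Y-Z$ is forced by $t^\ast<1$, leaving exactly $X<Y+Z$. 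The remaining cases $t^\ast\le 0$ and $t^\ast\ge 1$ (and $X=0$, where $P$ is affine) must be checked to yield $X<Y+Z$ as well, which follows from elementary estimates such as $\sqrt{Z^2-Y^2}<Z\le Y+Z$.

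I expect the main obstacle to be the bookkeeping in this last step: organizing the location of the vertex relative to $[0,1]$ into cases and verifying that the auxiliary inequalities ($X>Y-Z$ and positivity of $(X+Y)^2-Z^2$) are automatic in each regime, so that the single condition $X<Y+Z$ emerges. Everything else is either a quotation of Lemma~\ref{lem.30} or a routine, if slightly lengthy, algebraic expansion.
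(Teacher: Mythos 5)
Your proposal is correct and takes essentially the same route as the paper: there, too, the argument starts from Lemma~\ref{lem.30}, reduces $\|z_r\|<1$ to the trace/determinant condition on the same $2\times 2$ matrix (the trace bound being automatic since $z\in\d^3$), expands in $t=r^2$ to obtain the identical quadratic $X^2t^2-(X^2+Y^2-Z^2)t+Y^2>0$ on $[0,1]$, and finishes with the same vertex/discriminant case analysis yielding $X<Y+Z$. The only difference is cosmetic bookkeeping: the paper writes the discriminant condition as $|X^2-Y^2-Z^2|<2YZ$ instead of your factorization $\Delta=\bigl((X-Y)^2-Z^2\bigr)\bigl((X+Y)^2-Z^2\bigr)$, and the auxiliary checks you flag as the "main obstacle" do go through exactly as you predict.
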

\begin{proof}
We need to find those $z$ such that spectral radius of $z_r z_r^*$ is less then one for every $r\in [0,1].$
Letting $t = r^2$, we get that the trace $\tau$ of  the matrix $z_r z_r^*$ is
 $$\tau = t |z_1|^2 + t |z_2|^2 + 2 (1-t) |z_3|^2$$ 
 (this must be in the interval $(0,2)$)
 and the determinant $d$ is $$d= |t z_1 z_2 + (1-t) z_3^2|^2.$$ 
For the spectral radius to be less than one, we need to have $\tau + \sqrt{\tau^2 -4 d } < 2$. Since $\tau < 2$, this
  is equivalent to $\tau < 1+ d$. So we want
$$t |z_1|^2 + t |z_2|^2 + 2(1-t)|z_3|^2 < 1 + t^2 |z_1 z_2|^2 + (1-t)^2 |z_3|^4 + 2 t (1-t) \Re z_1 z_2 \bar z_3^2.
$$
Simple calculations show that this inequality is equivalent to
\begin{equation}\label{eq:1}
t^2|z_1 z_2 - z_3^2|^2 - t \left(|z_1 z_2 - z_3^2|^2 + (1-|z_3|^2)^2 - (1-|z_1|^2) (1- |z_2|)^2\right) + (1- |z_3|^2)^2>0.
\end{equation}
We want \eqref{eq:1} to be satisfied for every $t\in (0,1)$. Let
$A=|z_1 z_2 - z_3^2|$, $B= (1-|z_3|^2)$ and $C=\sqrt{1-|z_1|^2}\sqrt{1-|z_2|^2}$,
 and $a:=A^2$, $b:= B^2$, $c:=C^2$, $a\geq 0$, $b,c>0$.
Note that 
\be
\label{eq:39}
a t^2 - (a+b-c) t + b >0\ \forall \  t\in [0,1]
\ee
 if either $a+b -c \leq 0$ or $(a+b-c)/2a \geq 1$ or $(a+b-c)^2 < 4 a b$.
 (The inequality is automatically satisfied when $t=0$ or $1$.) 

The first equality means that $a\leq  c-b$. The second one that $a\leq b-c$ (so both together mean that $a\leq |b-c|$). The third one can be transformed to $$(a-b-c)^2 < 4bc,$$ that is $|A^2 - B^2 - C^2|< 2 BC$, which means that $|B-C|<A< B+C$.

Summing up, either $A^2\leq |B^2 - C^2|$ or $|B-C|<A<B+C$. Since $|B-C|^2\leq |B^2 - C^2|$ we get that 
\eqref{eq:39} holds if and only if $A<B+C$, that is
\eqref{eq:38}.
\end{proof}
Putting all these results together, we get the following theorem.
\begin{thm}
\label{thm814}
Let $\calv$ and $\calg$ be defined by 
\begin{eqnarray*}
\calv &\ =\ & \set{z \in \d^3}{z_3^2 =z_1z_2}\\
\calg  &\ =\ & \set{z \in \d^3}{|z_1 z_2 - z_3^2| < (1-|z_3|^2) + \sqrt{1-|z_1|^2}\sqrt{1-|z_2|^2}} .
\end{eqnarray*}
Then $\calg$ is convex, and 
 $(\calg,\calv)$ is an np pair. 
\end{thm}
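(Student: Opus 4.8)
The plan is to assemble the theorem from the machinery developed in Sections~\ref{sec7} and~\ref{sec8}; essentially all of the real work has already been done, so the task is to package it correctly. There are three assertions: that $\calg$ coincides with the region cut out by inequality~\eqref{eq:38}, that $\calg$ is convex, and that $(\calg,\calv)$ is an np pair. The first is precisely Lemma~\ref{lem40}, which identifies the abstractly-defined envelope~\eqref{rep.10} with the concrete inequality, so I would simply cite it.

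For convexity I would argue from the definition~\eqref{rep.10} rather than from the explicit formula. For each fixed $(\m,U)\in\u$ the map $z\mapsto z_U$ is complex-linear in $z$ and $X\mapsto\norm{X}$ is convex on $\b(\m)$, so $\set{z\in\c^3}{\norm{z_U}<1}$ is convex; hence $\calg$, the intersection of these sets over all $(\m,U)\in\u$, is convex. (One could instead verify convexity of~\eqref{eq:38} directly, but the operator description makes it transparent.) That $\calg$ is open is recorded after~\eqref{rep.10} and is manifest from Lemma~\ref{lem40}; a convex open set is a domain of holomorphy, so $\calg$ is a domain of holomorphy.

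It remains to show $(\calg,\calv)$ is an np pair. First I would check $\calv\subseteq\calg$: on $\calv$ one has $z_1z_2-z_3^2=0$, while the right-hand side of~\eqref{eq:38} is at least $1-|z_3|^2>0$ for $z\in\d^3$, so the strict inequality holds. Since $\calv$ is cut out in $\d^3$ by the single equation $z_3^2-z_1z_2=0$ and $\calv\subseteq\calg$, the pair $(\calg,\calv)$ is a Cartan pair. For the norm-preserving property I would invoke the Schur-class extension result of Section~\ref{sec7} (Proposition~\ref{prop72}): every $\Phi\in\ess(\calv)$ extends to some $\Phi^\sim\in\ess(\calg)$. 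Given a non-constant bounded holomorphic $f$ on $\calv$ with $\rho=\sup_\calv|f|\neq0$, apply this to $\rho^{-1}f\in\ess(\calv)$ to obtain $F_0\in\ess(\calg)$ extending it, and set $F=\rho F_0$; then $\sup_\calg|F|\le\rho$, while $\calv\subseteq\calg$ forces $\sup_\calg|F|\ge\sup_\calv|f|=\rho$, so $F$ is a norm-preserving extension. (Alternatively one routes this through Lemma~\ref{exist.lem.30}, using that $\calv$ is connected.)

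The genuine difficulty lies entirely in the results being cited: the extension in Proposition~\ref{prop72} rests on the even-Schur-function model of Section~\ref{sec6}, and the identification of $\calg$ with~\eqref{eq:38} rests on the reduction to $2\oplus2$ unitaries (Proposition~\ref{prop810}) together with the block-unitary normal form of Lemmas~\ref{lem.10} and~\ref{lem.20}. Granting those, the only point needing a moment's care is that the extension furnished by Proposition~\ref{prop72} is norm-\emph{preserving} and not merely norm-nonincreasing, which is exactly what the containment $\calv\subseteq\calg$ supplies.
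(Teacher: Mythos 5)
Your proposal is correct and follows essentially the same route as the paper, whose entire proof of Theorem~\ref{thm814} is the sentence ``putting all these results together'': convexity and openness from the definition~\eqref{rep.10} and Lemma~\ref{lem40}, the explicit inequality from Lemma~\ref{lem40}, and the extension property from Proposition~\ref{prop72}. The details you supply that the paper leaves implicit (the containment $\calv\subseteq\calg$, the Cartan-pair verification, and the scaling argument upgrading Schur-class extension to norm preservation) are exactly the right glue.
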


Now we shall show that $\calg$ is the largest balanced set for which $(G,\calv)$ is np.
We exploit the fact that even though $\calv$ is two-dimensional, it effectively has a 3-dimensional
tangent space at $0$.

\begin{lem}
\label{lem826}
Assume that $(G, \calv)$ is a Cartan pair.
Let $\psi \in \hol(G)$, let $f = \psi |_G$, and let 
 $F \in \hol(G)$ be any holomorphic extension of $f$ to $G$.
Then $D F (0) = D\psi(0)$.
\end{lem}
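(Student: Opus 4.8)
The plan is to reduce the statement to a claim about functions that vanish on $\calv$, and then to exploit the explicit branched cover $\pi$ introduced in Section~\ref{sec7} to read off the first-order data at the origin. Set $H = F - \psi$. Both $F$ and $\psi$ lie in $\hol(G)$ and agree on $\calv$ (both restrict to $f$ there), so $H$ is holomorphic on the open set $G \subseteq \c^3$, which contains a neighborhood of $0$, and $H$ vanishes identically on $\calv$. The desired conclusion $DF(0) = D\psi(0)$ is then exactly equivalent to $DH(0) = 0$, i.e. to $\partial_{z_1}H(0) = \partial_{z_2}H(0) = \partial_{z_3}H(0) = 0$.

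To establish this, I would use that $\calv$ is the image of the branched cover $\pi:\d^2 \to \calv$, $\pi(\lambda) = (\lambda_1^2, \lambda_2^2, \lambda_1\lambda_2)$. Since $\pi$ maps $\d^2$ into $\calv \subseteq G$, the composition $H \circ \pi$ is holomorphic on $\d^2$ and vanishes identically, so every coefficient in its Taylor expansion at $0$ is zero. The key observation is that each coordinate of $\pi$ is homogeneous of degree $2$ in $\lambda$ with no constant or linear part; consequently the degree-$2$ homogeneous component of $H \circ \pi$ arises solely from the linear part of $H$ and equals
\[
\partial_{z_1}H(0)\,\lambda_1^2 \;+\; \partial_{z_2}H(0)\,\lambda_2^2 \;+\; \partial_{z_3}H(0)\,\lambda_1\lambda_2 .
\]

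First I would note $H(0)=0$ (immediate, as $0\in\calv$), so the expansion of $H\circ\pi$ begins in degree $2$. Then, since $H\circ\pi\equiv 0$ forces this degree-$2$ component to vanish, and since the three monomials $\lambda_1^2,\ \lambda_1\lambda_2,\ \lambda_2^2$ are linearly independent, all three coefficients $\partial_{z_j}H(0)$ must be zero. This is precisely the \emph{effective three-dimensional tangent space} phenomenon alluded to before the statement: although $\calv$ is a two-dimensional variety, the three distinct quadratic monomials produced by $\pi$ separately detect all three first-order coefficients of $H$ at the singular point $0$. Unwinding $H = F - \psi$ yields $DF(0) = D\psi(0)$.

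I do not expect a genuine obstacle here; the only point requiring care is the bookkeeping that the degree-$2$ part of $H\circ\pi$ receives no contribution from the higher-order Taylor terms of $H$ (these produce terms of degree $\ge 4$ in $\lambda$, since $\pi$ is purely quadratic), so that the linear coefficients of $H$ are isolated cleanly. Everything else is a direct matching of Taylor coefficients.
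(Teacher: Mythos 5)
Your proof is correct, but it takes a genuinely different route from the paper's. The paper argues by division: since $F-\psi$ vanishes on $\calv$, it factors as $F(z)-\psi(z)=(z_3^2-z_1z_2)\,h(z)$ for some $h\in\hol(G)$ (citing Chirka's book on analytic sets), and the derivative of the right-hand side vanishes at $0$ because both $z_3^2-z_1z_2$ and its differential vanish at the origin. You instead pull $H=F-\psi$ back through the branched cover $\pi(\lambda)=(\lambda_1^2,\lambda_2^2,\lambda_1\lambda_2)$ of Section \ref{sec7} and match Taylor coefficients: since each component of $\pi$ is homogeneous quadratic, the degree-$2$ homogeneous part of $H\circ\pi\equiv 0$ is precisely the pullback of the linear part of $H$, and the linear independence of $\lambda_1^2$, $\lambda_2^2$, $\lambda_1\lambda_2$ kills all three partial derivatives; your closing remark correctly disposes of the only delicate point, namely that higher-order terms of $H$ contribute only in degree $\geq 4$. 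As for what each approach buys: the paper's proof is a one-liner modulo the division theorem, but that theorem is a nontrivial fact about the ideal of $\calv$ (resting on the irreducibility of $z_3^2-z_1z_2$, and on pseudoconvexity of $G$ if one wants the global factor $h$), whereas your argument is elementary and self-contained, needs nothing from $G$ beyond being open and containing $\calv$, and makes concrete the remark preceding Theorem \ref{thm828} that $\calv$, though two-dimensional, has an effectively three-dimensional tangent space at $0$. Conversely, the paper's route transfers verbatim to any hypersurface with a known defining function whose differential vanishes at the base point, with no explicit parametrization required.
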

\begin{proof}
As $F - \psi$ vanishes on $\calv$, we have 
\be
\label{eq827}
F(z) - \psi(z) \ = \
(z_3^2 - z_1z_2) h(z)
\ee
for some $h \in \hol(G)$ \cite[p. 27]{chi90}.
As the derivative of the right-hand side of \eqref{eq827} 
vanishes at $0$, we get the result.
\end{proof}

\begin{thm}
\label{thm828}
Let $G$ be a balanced domain in $\c^3$, and assume that $(G, \calv)$ is an np pair.
Then $G \subseteq \calg$.
\end{thm}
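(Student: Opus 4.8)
The plan is to recast the claim through Lemma~\ref{env.lem.10}: since that lemma already tells us that $z\in\calg$ precisely when $\norm{z}_\u<1$, it suffices to prove that every $z^0\in G$ satisfies $\norm{z^0}_\u<1$. First I would exploit that $G$ is open and balanced. Openness produces a $\rho>1$ with $w:=\rho z^0\in G$ (a small ball about $z^0$ contains $\rho z^0$), while balancedness guarantees that the analytic disc $\zeta\mapsto\zeta w$ lies in $G$ for all $\zeta\in\d$. Because $z\mapsto z_U$ is linear, $\norm{\cdot}_\u$ is homogeneous, so once we show $\norm{w}_\u\le1$ we get $\norm{z^0}_\u\le 1/\rho<1$, i.e.\ $z^0\in\calg$; thus it is enough to bound $\norm{w}_\u$ for an arbitrary point $w\in G$.

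The mechanism that feeds the np hypothesis into a first-order bound at $w$ is a Schwarz argument combined with Lemma~\ref{lem826}. Given any $\Phi\in\ess(\calv)$ with $\Phi(0)=0$, the np property yields a norm-preserving extension $\Phi^\sim\in\ess(G)$; restricting to the disc through $w$, the map $\zeta\mapsto\Phi^\sim(\zeta w)$ is a self-map of $\d$ fixing $0$, so Schwarz's Lemma gives $|D\Phi^\sim(0)\cdot w|\le1$. By Lemma~\ref{lem826} (applied on a common neighbourhood of $0$ in $G\cap\calg$), the derivative $D\Phi^\sim(0)$ is independent of the chosen extension and equals the intrinsic gradient $c=(c_1,c_2,c_3)$ of $\Phi$ at $0$. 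Hence $|c\cdot w|\le1$ for every gradient $c$ arising from a Schur function on $\calv$ that vanishes at $0$.

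The heart of the argument is then to identify this family of gradients with $\norm{\cdot}_\u$. Using the realizations of Propositions~\ref{prop69} and \ref{prop72}, write $\Phi(z)=F_\xi(z_U)$ for some $(\m,U)\in\u$ and $\xi=(a,\beta,\gamma,D)\in\R(\m)$; the normalization $\Phi(0)=0$ forces $a=0$, and unitarity of $L_\xi$ in \eqref{eq68} then gives $\norm{\beta}=\norm{\gamma}=1$. Reading off the linear-in-$z$ term in \eqref{eq681} shows $c\cdot z=\ip{z_U\gamma}{\beta}$, so $|c\cdot w|\le\norm{w_U}\le\norm{w}_\u$; thus the whole family of gradients is bounded by $\norm{\cdot}_\u$. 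For the reverse inequality I would invoke Proposition~\ref{prop810} to pick a maximizing pair $(\m,U)\in\u_2$ with $\norm{w_U}=\norm{w}_\u$ together with a unit singular vector $\gamma$, set $\beta=w_U\gamma/\norm{w_U\gamma}$, and complete $(0,\beta,\gamma,\,\cdot\,)$ to an element $\xi\in\R(\m)$ by taking $D$ to be any partial isometry carrying $\beta^\perp$ onto $\gamma^\perp$ (both are $3$-dimensional since $\dim\m=4$). The resulting $\Phi=F_\xi(z_U)$ vanishes at $0$ and has gradient $c$ with $c\cdot w=\ip{w_U\gamma}{\beta}=\norm{w}_\u$. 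Combining the two bounds gives $\sup_c|c\cdot w|=\norm{w}_\u$, and since every such $c$ satisfies $|c\cdot w|\le1$ we conclude $\norm{w}_\u\le1$, which finishes the proof.

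I expect the main obstacle to be this last, surjectivity-type step: verifying that the unit vectors $\beta,\gamma$ genuinely extend to a unitary colligation $L_\xi$ (equivalently, producing the partial isometry $D$) so that the extremal value $\norm{w}_\u$ is actually \emph{realized} as the gradient of an admissible Schur function, not merely used as an upper bound. A secondary technical point is the careful local application of Lemma~\ref{lem826}: $\Phi^\sim$ lives on $G$ whereas the transfer-function representative $z\mapsto F_\xi(z_U)$ lives on $\calg$, so one must argue near $0$ that both are extensions of the same function on $\calv$ and hence share their first-order parts.
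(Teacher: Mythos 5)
Your proposal is correct and is essentially the paper's own argument: both rest on feeding the norm-preserving extension into a Schwarz-lemma bound along the disc $\zeta \mapsto \zeta w$ supplied by balancedness, using Lemma~\ref{lem826} to identify the derivative at $0$ with that of the test function, and using Lemma~\ref{env.lem.10} together with Proposition~\ref{prop810} to translate membership in $\calg$ into the norm $\norm{\cdot}_\u$. The only real difference is organizational: the paper argues by contradiction from a point $\l \in G \setminus \overline{\calg}$ and tests against the bare linear functional $z \mapsto \langle z_U \xi, \eta \rangle$, which already lies in $\ess(\calv)$ and vanishes at $0$ because $\calv \subseteq \calg$, so the unitary-colligation construction of $F_\xi$ realizing the extremal gradient --- the step you flag as the main obstacle, and the one forcing your delicate local reapplication of Lemma~\ref{lem826} on $G \cap \calg$ --- can be skipped entirely.
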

\begin{proof}
Suppose that $G$ is not a subset of $\calg$. Since $G$ is open, this means
we can find a point $\l \in G$ that is not in $\overline{\calg}$.
By Lemma \ref{env.lem.10} and Proposition \ref{prop810}, this means that there is a pair
$(M,U) \in \u_2$ so that $\| \l_U \| > 1 $.
This means there are unit vectors $\xi, \eta \in \c^4$ so that the linear function
\[
f(z) \ = \ \langle z_U \xi, \eta \rangle
\]
is larger than $1$ when $z = \l$.
But by Lemma \ref{env.lem.10}, we have $|f|$ is less than $1$ on $\calg$, and hence on $\calv$.

Let $F$ be a norm-preserving extension of $f$ to $\ess(G)$.
Define a function of one variable $g(\zeta) = F(\zeta \l)$.
Since $G$ is balanced, $g$ is defined on $\d$,  and it is in the Schur class since $F$ is.
Moreover $g(0) = 0$ and by Lemma \ref{lem826}
\[
g'(0) = D F (0) \begin{bmatrix} \l_1\\ \l_2 \\ \l_3 \end{bmatrix}
=  D f (0) \begin{bmatrix} \l_1\\ \l_2 \\ \l_3 \end{bmatrix} 
= f(\l) .
\]
This contradicts Schwarz's lemma.
\end{proof}

\section{The Noncommutative Analysis Setting}
\label{sec9}

\subsection{Overview}
Joe Taylor introduced noncommutative analysis in his seminal work \cite{tay73} on functional calculus for noncommuting elements of a Banach algebra. Landmarks in the development of the theory are Dan Voiculescu's works \cite{voi04,voi10} in the context of developing the theory of free probability, Bill Helton's result \cite{helt02} proving that positive free polynomials are sums of squares, the advance of Helton and McCullough \cite{hm12} as a step in  Helton's fruitful program to develop a descriptive theory of the domains on which LMI and semi-definite programming apply, and the recent monograph by Kaliuzhnyi-Verbovetzkii and Vinnikov
\cite{kv14} that gives a panoramic view of the field to date, and establishes
a beautiful ``Taylor-Taylor'' formula for nc-functions.

There is much other notable work as well. As a sampling, there are articles of Popescu \cite{po06, po08, po10, po11} which extend various results from classical function theory to functions of d-tuples of bounded operators; the magnum opus \cite{bgm} of Ball,Groenewald and Malakorn, which extends realization formulas for functions of commuting operators to functions of non-commuting operators; Alpay and Kalyuzhnyi-Verbovetzkii \cite{alpkal} which studies realization formulas for noncommutative rational functions that are J-inner; and \cite{hkm11a,hkm11b} where  Helton, Klep and
McCullough study mappings of noncommutative domains.
 \\ \\

In this section, which is largely expository in nature, we shall present some of the techniques developed 
in the papers \cite{agmc15a}, \cite{agmc15b}, and \cite{agmc13b} to prove the existence of free holomorphic extensions of both holomorphic functions defined on varieties in $\c^d$ and free holomorphic functions defined on free varieties in $\m^d$, the ``$d$ dimensional noncommutative universe''. As an application one obtains a classical Cartan theorem with sharp bounds,
 with norms that are defined using matrices.

\subsection{Free Holomorphic Functions}
There are many types of analysis that can be carried out on functions in noncommuting variables. Here, we shall focus on \emph{free analysis}, by which we mean the study of \emph{free holomorphic functions}.
Let $\pd$ denote the free algebra on $d$ generators and let $\mn$ denote the space of $n\times n$ matrices with complex entries. Let $\mn^d$ denote the space of $d$-tuples of $n \times n$ matrices and define $\md$, the \emph{$d$-dimensional nc universe}, by
\[
\md = \bigcup_{n=1}^\infty \mn^d.
\]
We may equip $\md$ with the \emph{coproduct topology} wherein one ordains that a set $D$ in $\md$ is open if and only if $D \cap \mn^d$ is open in $\mn^d$ for each integer $n\ge 1$.

If $\delta \in \pd$ and $x\in \md$ then we may form $\delta(x)$ in the natural way. More generally, if $\delta=[\delta_{ij}]\in \pdij$, the collection of $I\times J$ matrices with entries in $\pd$ and $x \in \md$, we may form
\[
\delta(x) = [\delta_{ij}(x)].
\]

We say that a subset of $\md$ is \emph{basic} if it has the form
\[
B_\delta=\set{x\in \md}{\norm{\delta(x)}<1}
\]
for some matrix $\delta$ with entries in $\pd$. Noting that
\[
B_{\delta_1} \cap B_{\delta_2} = B_{\delta_1 \oplus \delta_2},
\]
it follows that the collection of basic sets forms a basis for a topology on $\md$, which we refer to as the \emph{free topology}. We say that a subset of $\md$ is a \emph{domain} if it is open in the free topology.

If $D \subseteq \md$ is a free domain, and $f:D \to \mone$ is a function, we say that $f$ is a \emph{free holomorphic function} if $f$ can be locally uniformly approximated by free polynomials, i.e., for each $\lambda \in D$ there exists a basic set $B_\delta$ such that
\[
\lambda \in B_\delta \subseteq D
\]
and
\[
\forall_{\epsilon>0}\  \exists_{\pi \in \pd}\  \forall_{x\in B_\delta}\ \  \norm{f(x)-\pi(x)} < \epsilon.
\]

If $D$ is a free domain and $E\subseteq D$, we shall refer to $(D,E)$ as a \emph{free pair}. If $(D,E)$ is a free pair,  then a function $f:E \to \mone$ is said to be \emph{free holomorphic on $E$} if for each point $\lambda\in E$ there exist a basic set $B_\delta$ and a free holomorphic function $F$ on $B_\delta$ such that
\[
\lambda \in B_\delta \subseteq D\qquad \text{ and }\qquad
F|E =f.
\]
Finally, we say that a free pair $(D,E)$ is \emph{norm preserving} if for each bounded free holomorphic function $f$ on $E$,  there exists a free holomorphic function $F$ on $D$ that extends $f$ and such that
\[
\sup_{x \in D} \norm{F(x)} = \sup_{x \in E} \norm{f(x)}.
\]
\subsection{Pick Pairs}
If $S\subseteq \pd$, we define $V(S)$ in $\md$ by
\[
V(S)=\set{x\in \md}{\forall_{\delta \in S }\ \ \delta(x)=0}
\]
and if $A \subseteq \md$ we say that $A$ is a \emph{free variety in $\md$} if $A=V(S)$ for some $S\subseteq \pd$. If $D$ is a free domain and $A$ is a variety in $\md$, we say that $V=A \cap D$ is a \emph{variety in $D$}.
For a point $x\in \md$ we define an ideal $I_x$ in $\pd$ by
\[
I_x = \set{\delta \in \pd}{\delta(x)=0}
\]
and define a variety in $\md$ by
\[
V_x = V(I_x).
\]
Notice that if $A$ is a variety in $\md$ and $x\in A$, then $V_x \subseteq A$.
\begin{defin}\label{free.def.10}
A \emph{Pick pair} is an ordered pair $(D,E)$ such that $D$ is a basic set in $\md$ and $E$ is a subset of $D$ satisfying the following two properties.
\be\label{free.60}
E \text{ is closed with respect to direct sums.}
\ee
\be\label{free.70}
x\in E \implies V_x\cap D \subseteq E.
\ee
\end{defin}
\begin{exam}\label{free.exam.10}
If $D$ is a basic set in $\md$ and $V$ is a variety in $D$, then $(D,V)$ is a Pick pair.
\end{exam}
\begin{proof}
Let $V=A \cap D$ where $A$ is a variety in $\md$. As $D$ and $A$ are closed with respect to direct sums, $V$ is closed with respect to direct sums, i.e., \eqref{free.60} holds. Furthermore, since $V_x \subseteq A$ whenever $x\in A$,
$V_x \cap D \subseteq A \cap D =V$ whenever $x\in A$, i.e., \eqref{free.70} holds.
\end{proof}
\begin{defin}\label{free.def.20}
If $(D,E)$ is a Pick pair, then \emph{Pick data on $E$} is a function $f:E \to \mone$ satisfying the following properties:
\be\label{free.80}
\forall_{x\in E}\ \ f(x) \in \pd(x).
\footnote{Here, $\pd(x) =\set{\pi(x)}{\pi \in \pd}$},
\ee
and $f$ preserves direct sums, i.e.,
\be\label{free.90}
\forall_m\ \ \ x_1,\ldots,x_m \in E \implies f(\oplus_{i=1}^m x_i) = \oplus_{i=1}^m f(x_i).
\ee

\end{defin}
\begin{defin}\label{free.def.30}
If $(D,E)$ is a Pick pair, we say that $(D,E)$ is \emph{norm preserving} if whenever $f$ is bounded Pick data on $E$, there exists a free holomorphic function $F$ on $D$ that extends $f$ and such that
\be\label{free.100}
\sup_{x\in D} \norm{F(x)} = \sup_{x\in E} \norm{f(x)}.
\ee
\end{defin}
It turns out that Pick pairs are  always norm preserving.
\begin{thm}\label{free.thm.20}
(\cite{agmc13b}, Theorem 1.5) If $(D,E)$ is a Pick pair, then $(D,E)$ is norm preserving.
\end{thm}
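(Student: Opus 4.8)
The plan is to normalize, reduce the statement to a free Schur-class interpolation problem solved by a transfer-function realization, and then exploit the direct-sum closure of $E$ to make the requisite Pick positivity automatic.

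First I would normalize. Put $M=\sup_{x\in E}\norm{f(x)}$; the case $M=0$ forces $f\equiv 0$ and is trivial, and otherwise we rescale so that $M=1$. Since $E\subseteq D$ and any extension satisfies $F|E=f$, we automatically have $\sup_{x\in D}\norm{F(x)}\ge 1$, so it suffices to produce a free holomorphic $F$ on the basic set $D=B_\delta$ with $F|E=f$ and $\sup_{x\in D}\norm{F(x)}\le 1$, i.e. with $F$ in the free Schur class of $D$.

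Second, I would invoke the realization theorem for the free Schur class of a basic set (the noncommutative Schur--Agler/transfer-function theorem; see \cite{bgm}, \cite{kv14}, and the exposition in \cite{amy20}): a free holomorphic $F$ on $B_\delta$ is contractive if and only if there are an auxiliary Hilbert space $\mathcal{L}$ and a contractive (indeed unitary) colligation $\begin{bmatrix} a & \beta^{*}\\ \gamma & \mathbf{D}\end{bmatrix}$ such that
\[
F(x)\ =\ a+\beta^{*}(\delta(x)\otimes I_{\mathcal L})\bigl(I-\mathbf{D}(\delta(x)\otimes I_{\mathcal L})\bigr)^{-1}\gamma .
\]
By a lurking-isometry argument, the existence of such a colligation reproducing $f$ on $E$ is equivalent to the existence of an $\mathcal{L}$-valued map $x\mapsto u_x$ on $E$ satisfying the hereditary kernel identity
\[
I-f(y)^{*}f(x)\ =\ u_y^{*}\bigl(I-\delta(y)^{*}\delta(x)\bigr)u_x,\qquad x,y\in E,
\]
read in the graded (block-matrix) sense appropriate to the various sizes of $x,y$. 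Granting such a kernel, the colligation it produces defines $F$ by the displayed formula on all of $D$ at once, $F$ is then free holomorphic and contractive there by the realization theorem, and the identity forces $F|E=f$; here \eqref{free.80} and \eqref{free.70} are exactly what guarantee that the point values $f(x)\in\pd(x)$ are consistent with a single free function and that $f$ has already been prescribed on the whole algebraic piece $V_x\cap D$ that $x$ determines, so nothing obstructs the restriction.

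Third --- the heart of the matter --- I would produce the kernel $x\mapsto u_x$ by a Hahn--Banach separation in the cone generated by the kernels $(x,y)\mapsto I-\delta(y)^{*}\delta(x)$, which reduces to verifying a finite Pick positivity. This is where \eqref{free.60} and \eqref{free.90} do the work: for any finite family $x_1,\dots,x_m\in E$ the direct sum $X=\bigoplus_i x_i$ lies again in $E$, with $\delta(X)=\bigoplus_i\delta(x_i)$ and, by \eqref{free.90}, $f(X)=\bigoplus_i f(x_i)$; together with the fact that free functions respect intertwiners (so that off-diagonal couplings among the $x_i$ are captured by conjugating $X$ by the appropriate similarities), the multi-point positivity one must check collapses to the single operator inequality $\norm{f(X)}\le 1$, i.e. $I-f(X)^{*}f(X)\ge 0$, which holds by the normalization. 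Because $\norm{\delta(X)}<1$ on $D$, the factor $I-\delta(X)^{*}\delta(X)$ is strictly positive, so the kernel can be solved for; the separation argument then assembles these local solutions into a single reproducing map $u$.

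The main obstacle I anticipate is precisely making this positivity/duality step rigorous: one must fix the correct cone of hereditary kernels on $E$, check that it is stable under the operations forced by direct sums and intertwiners, and apply the separation theorem to extract an honest $\mathcal{L}$-valued reproducing map (equivalently, a completely positive/GNS structure) rather than a merely finitely additive state. Verifying that the colligation built from $u$ yields a function defined and contractive on the \emph{entire} basic set $D$ --- not just near $E$ --- is where the basic-set hypothesis on $D$ and the transfer-function formula are essential; the remaining checks that $F$ is free holomorphic and that $F|E=f$ are then routine. For the detailed execution of this machinery I would follow \cite{agmc13b}.
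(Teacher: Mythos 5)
Your overall toolkit (realization theorem for the free Schur class, lurking isometry, cone separation) is the right machinery---it is how the underlying interpolation theorem is proved in \cite{agmc13b}---but the step you label ``the heart of the matter'' rests on a false reduction, and it is exactly where the difficulty lives. You claim that the finite Pick positivity needed for the lurking isometry ``collapses to the single operator inequality $\|f(X)\|\le 1$'' at the direct sum $X=\oplus_i x_i$, justified by \eqref{free.60} and \eqref{free.90}. That implication is false, already for $d=1$, $\delta(x)=x$, and two scalar nodes: take $x_1=0$, $x_2=\tfrac12$ with $f(x_1)=0$, $f(x_2)=\tfrac34$ (data coming from the polynomial $\pi(z)=\tfrac32 z$). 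Then $\|f(x_1\oplus x_2)\|=\tfrac34<1$, yet no function in the free (or even classical) Schur class interpolates these values, by Schwarz--Pick; equivalently, the $2\times 2$ Pick kernel fails to be positive. Contractivity of the data at direct sums of the nodes simply does not encode the off-diagonal positivity, and ``free functions respect intertwiners'' cannot be invoked for $f$, since at this stage $f$ is only Pick data, not yet the restriction of a free function.

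What makes the theorem true is the axiom you never use at this step: \eqref{free.70}, that $x\in E$ implies $V_x\cap D\subseteq E$. The quantity controlling the positivity for the section $\{x_1,\dots,x_m\}$ is not $\|f(X)\|$ but $\sup\{\|f(z)\| : z\in V_X\cap D\}$, the supremum over the whole free variety generated by $X$, which contains every similarity coupling $S^{-1}XS$ lying in $D$ and points of all matrix sizes. (In the scalar example this supremum exceeds $1$, which is why the example does not contradict the theorem: such data can never be norm-one Pick data on a Pick pair.) Proving that this variety supremum bounds the minimal interpolation norm is the hard content; it is Theorem 1.3 of \cite{agmc13b}, where the separating functional produced by the cone argument is used to manufacture a point of $V_X\cap D$ violating the bound. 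The paper's own proof does not redo any of that machinery: it fixes a sequence $\{\lambda_j\}$ dense in $E$, forms $\Lambda_n=\oplus_{j\le n}\lambda_j\in E$, uses \eqref{free.60}--\eqref{free.90} to show that $f$ agrees with a single free polynomial $\pi_n$ on all of $V_{\Lambda_n}\cap D$, applies the cited Theorem 1.3---with \eqref{free.70} giving $\sup_{V_{\Lambda_n}\cap D}\|\pi_n\|\le\sup_E\|f\|$---to obtain extensions $F_n$ with the correct norm, and finishes with a Montel argument. If you repair your positivity step by replacing ``$\|f(X)\|\le 1$'' with ``$\sup_{V_X\cap D}\|f\|\le 1$, which holds because $V_X\cap D\subseteq E$,'' you are no longer doing anything easier than \cite{agmc13b}: you are reproving its Theorem 1.3, which is precisely the part your outline defers.
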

\begin{proof}
Assume that $(D,E)$ is a Pick pair and $f$ is bounded Pick data on $E$. We wish to show that there exists a bounded free holomorphic function $F$ on $D$ that extends $f$ and such that \eqref{free.100} holds. Fix a sequence $\{\lambda_j\}_{j=1}^\infty$ in $E$ that is dense in $E$ in the coproduct topology and for each $n\ge 1$ let
\[
\Lambda_n = \oplus_{j=1}^n \lambda_j
\]
By $\eqref{free.60}$ $\Lambda_n \in E$. Hence, \eqref{free.80} implies that there exists $\pi_n \in \pd$ such that
\be{}\label{free.110}
f(\Lambda_n)=\pi_n(\Lambda_n).
\ee

Now, fix $x\in V_{\Lambda_n}$. As $\Lambda_n \in E$, \eqref{free.70} implies that $x\in E$. Hence, by \eqref{free.60}, $\Lambda_n \oplus x \in E$. But then, \eqref{free.80} implies that there exists $\rho\in \pd$ such that
\[
f(\Lambda_n \oplus x) = \rho(\Lambda_n \oplus x),
\]
or equivalently, via \eqref{free.90},
\[
f(\Lambda_n)=\rho(\Lambda_n)\qquad \text{ and }\qquad f(x)=\rho(x).
\]
The first equation above together with \eqref{free.110} imply that $\rho(\Lambda_n)=\pi_n(\Lambda_n)$. Therefore, as $x \in V_{\Lambda_n}$, $\rho(x)=\pi_n(x)$. But then the second equation implies that $f(x)=\pi_n(x)$.

Summarizing, in the previous paragraph we showed that if $x\in V_{\Lambda_n}$, then $f(x)=\pi_n(x)$. Hence,
\[
\sup_{x \in V_{\Lambda_n}} \norm{\pi_n(x)} = \sup_{x \in V_{\Lambda_n}} \norm{f(x)} \le \sup_{x \in E} \norm{f(x)}.
\]
Consequently, by Theorem 1.3 in \cite{agmc13b}, there exists a bounded free holomorphic function $F_n$ on $D$ that satisfies
\[
F_n(\Lambda_n) = f(\Lambda_n)\qquad \text{ and }\qquad \sup_{x\in D} \norm{F_n(x)} \le \sup_{x \in E} \norm{f(x)}.
\]
The desired function $F$ can now be obtained by invoking a Montel Theorem as in the proof of Theorem 1.5 in \cite{agmc13b}.
\end{proof}
\subsection{$p$ Norms}
In order to relate the free setting to the classical theory of functions in several complex variables of particular value is the algebraic set $\mdcom$  defined in $\md$ by
\[
\mdcom=V(S)
\]
where
\[
S=\set{x_ix_j-x_jx_i}{i,j=1,\ldots,d}.
\]
We let $\pdcom$ denote the algebra of polynomials in $d$ variables. 
Naturally, $\pdcom$ can be identified with the quotient $\pd / I$ where $I$ is the ideal generated by the set $S$ above. Accordingly, each $p \in \pdcom$ corresponds to a coset $\delta +I$ and conversely to each coset $\delta +I$ corresponds an element of $\pdcom$. When $p$ and $\delta$ are so related we write $p=[\delta]$.

For $p$ an $I \times J$ matrix with entries in $\pdcom$ we define a set $G_p$ in $\c^d$ by the formula
\[
G_p = \set{\lambda \in \c^d}{\norm{p(\lambda)}<1},
\]
For example, if
\[
p(\lambda)=\begin{bmatrix}\lambda_1&&&\\
&\lambda_2&&\\
&&\ddots&\\
&&&\lambda_d
\end{bmatrix},
\]
then $G_p$ equals the polydisc $\d^d$, if
\[
p(\lambda)=\begin{bmatrix}\lambda_1\\
\lambda_2\\
\ddots\\
\lambda_d
\end{bmatrix},
\]
then $G_p$ is the unit ball in $\c^d$ centered at the origin, and if
\[
p(\lambda)=\begin{bmatrix}p_1(\lambda)&&&\\
&p_2(\lambda)&&\\
&&\ddots&\\
&&&p_n(\lambda)
\end{bmatrix}
\]
where $p_1,\ldots,p_n$ are polynomials in $d$ variables,
then $G_p$ is the general polynomial polyhedron in $\c^d$. We say a domain $D$ in $\c^d$ is an \emph{operhedron} if $D=G_p$ for some matrix $p$ with entries in $\pdcom$. (So an operhedron 
is a polynomial polyhedron if and only if it can be written as $G_p$ where $p$ is a diagonal matrix.)

Basic sets in $\md$ and operhedrons in $\c^d$ are related in a natural way described in the following lemma.
\begin{lem}\label{}
If $\delta$ is an $I\times J$ matrix of free polynomials and $p=[\delta]$, then
\[
([\lambda_1],\ldots,[\lambda_d]) \in B_\delta \cap \md_1\qquad  \iff\qquad  (\lambda_1,\ldots,\lambda_d) \in G_p.
\]
\end{lem}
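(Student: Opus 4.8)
The plan is to reduce everything to the single observation that, at a tuple of $1\times 1$ (hence scalar, hence commuting) matrices, evaluating the free polynomial $\delta$ yields literally the same matrix as evaluating its commutative representative $p=[\delta]$ at the corresponding point of $\cd$. Once that identity of matrices is established, the two defining conditions $\norm{\delta(x)}<1$ and $\norm{p(\lambda)}<1$ coincide, and the stated equivalence is immediate.

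First I would pass to entries. Writing $x=([\lambda_1],\dots,[\lambda_d])\in\md$ and $\lambda=(\lambda_1,\dots,\lambda_d)\in\cd$, we have $\delta(x)=[\delta_{ij}(x)]$ and $p(\lambda)=[p_{ij}(\lambda)]$, where $p_{ij}=[\delta_{ij}]$ because the quotient map $\pd\to\pdcom$ is applied entrywise. Thus it suffices to prove, for a single free polynomial $\delta_{ij}\in\pd$, that the scalar $\delta_{ij}(x)$ equals $p_{ij}(\lambda)$.

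For this, identify $\mathcal{M}_1$ with $\c$ and consider the evaluation map $\mathrm{ev}_x:\pd\to\c$, $\pi\mapsto\pi(x)$, which is a unital algebra homomorphism by the universal property of the free algebra. Since the entries $[\lambda_i]$ are scalars they commute, so each generator $x_ix_j-x_jx_i$ of the commutator ideal $I$ is sent to $0$; hence $I\subseteq\ker\mathrm{ev}_x$. By the universal property of the quotient, $\mathrm{ev}_x$ factors as $\mathrm{ev}_x=\widetilde{\mathrm{ev}}\circ q$, where $q:\pd\to\pd/I=\pdcom$ is the quotient map and $\widetilde{\mathrm{ev}}:\pdcom\to\c$ is a homomorphism sending $q(x_i)$ to $\lambda_i$; being determined by its values on the generators, $\widetilde{\mathrm{ev}}$ is exactly evaluation at $\lambda$. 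Therefore $\delta_{ij}(x)=\widetilde{\mathrm{ev}}(q(\delta_{ij}))=p_{ij}(\lambda)$. Assembling the entries gives $\delta(x)=p(\lambda)$ as $I\times J$ complex matrices, so $\norm{\delta(x)}=\norm{p(\lambda)}$ and
\[
x\in B_\delta\iff\norm{\delta(x)}<1\iff\norm{p(\lambda)}<1\iff\lambda\in G_p,
\]
which is the assertion.

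I do not expect a genuine obstacle here: the whole content is the factorization of the scalar evaluation homomorphism through the commutative quotient $\pd/I=\pdcom$, which is forced by the fact that scalars commute. The only point demanding care is purely notational, namely keeping the identification $p=[\delta]$ consistent entrywise and remembering that $1\times 1$ matrices are being identified with complex numbers; with that bookkeeping in place the norm equality, and hence the equivalence, drops out with no analysis.
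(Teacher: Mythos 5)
Your proof is correct and follows essentially the same route as the paper: both arguments come down to the identity $\delta([\lambda_1],\ldots,[\lambda_d])=p(\lambda_1,\ldots,\lambda_d)$ for scalar tuples, followed by comparing $\norm{\delta(x)}<1$ with $\norm{p(\lambda)}<1$. The only difference is that the paper asserts this identity directly as a consequence of $p=[\delta]$, while you justify it via the factorization of the evaluation homomorphism through the commutative quotient, which is a fine (if optional) elaboration.
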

\begin{proof}
Notice first that
\[
([\lambda_1],\ldots,[\lambda_d]) \in \md_1\qquad  \iff\qquad  (\lambda_1,\ldots,\lambda_d) \in \c^d.
\]
Also, as $p=[\delta]$,
\[
\forall_{\lambda \in \c^d}\ \ \delta([\lambda_1],\ldots,[\lambda_d])=p(\lambda_1,\ldots,\lambda_d).
\]
Therefore,
\begin{align*}
([\lambda_1],\ldots,[\lambda_d]) \in B_\delta \cap \md_1 &\iff \norm{\delta([\lambda_1],\ldots,[\lambda_d])} <1\\
&\iff \norm{p(\lambda_1,\ldots,\lambda_d)} <1\\
&\iff (\lambda_1,\ldots,\lambda_d) \in G_p.
\end{align*}
\end{proof}
For $G_p$ an operhedron, we define
\[
\f_p =\set{x \in \mdcom\ }{\ \norm{p(x)} <1},
\]
and for $f\in \hol(G_p)$, define $\norm{f}_p$ by
\be
\label{eq913}
\norm{f}_p = \sup_{x\in \f_p} \norm{f(x)}.
\ee
By $f(x)$ on the right-hand side of \eqref{eq913} we mean we use functional calculus to 
evaluate the holomorphic function $f$ on the $d$-tuple $x$ whose spectrum lies in the domain of $f$.

We let $\hinf_p$ denote the Banach algebra
\[
\hinf_p =\set{f\in \hol(G_p)}{\norm{f}_p<\infty}.
\]
The idea of studying function theory on polydiscs using p-norms dates back to \cite{ag90}. The idea was first considered for general p-operhedrons by Ambrozie and Timotin in the beautiful paper \cite{amti03}. In \cite{babo04} Ball and Bolotnikov considerably extended the work in \cite{amti03} introducing many powerful new tools.

A fundamental connection between p-norms and free analysis is revealed by the following result of two of the authors..
\begin{thm}\label{free.thm.40}(\cite{agmc15b}, Theorem 8.5)
Assume that $\delta$ is an $I\times J$ matrix of free polynomials and let $p=[\delta]$. If $f \in \hinf_p$, then there exists a free holomorphic function $F$ on $B_\delta$ such that
\[
\forall_{\lambda \in G_p}\ \ F([\lambda_1],\ldots,[\lambda_d]) = [f(\lambda_1,\ldots,\lambda_d)]\ \qquad \text{ and }\qquad \sup_{x\in B_\delta} \norm{F(x)}=\norm{f}_p.
\]
Conversely, if $F$ is a bounded free holomorphic function on $B_\delta$ and $f \in \hol(G_\delta)$ is defined by
\[
[f(\lambda_1,\ldots,\lambda_d)]=F([\lambda_1],\ldots,[\lambda_d]),\qquad \lambda \in G_\delta,
\]
then $f\in \hinf_p$ and
\[
\norm{f}_p \le \sup_{x\in B_\delta} \norm{F(x)}.
\]
\end{thm}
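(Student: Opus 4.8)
The plan is to prove the two halves separately, putting essentially all of the work into the construction half and reducing it to a transfer-function realization adapted to $p$. The converse half is immediate. Note first that $\f_p = B_\delta \cap \mdcom$: for a commuting tuple $x$ the relations defining $\mdcom$ kill the difference between $\delta$ and its commutative image, so $\delta(x) = p(x)$ and hence $\norm{\delta(x)} < 1 \iff \norm{p(x)} < 1$. Thus if $F$ is bounded free holomorphic on $B_\delta$ and $M = \sup_{x \in B_\delta}\norm{F(x)}$, then for every $x \in \f_p \subseteq B_\delta$ we have $\norm{f(x)} = \norm{F(x)} \le M$, whence $\norm{f}_p \le M$. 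This is the final displayed inequality of the statement.

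For the construction half, normalize so that $\norm{f}_p = 1$; it then suffices to build a free holomorphic $F$ on $B_\delta$ with $F([\lambda_1],\dots,[\lambda_d]) = [f(\lambda)]$ for $\lambda \in G_p$ and $\sup_{x\in B_\delta}\norm{F(x)} \le 1$, since applying the converse half to this $F$ forces equality. The engine is a realization: I claim $\norm{f}_p \le 1$ forces $f$ to admit an Agler model adapted to $p$ --- a separable Hilbert space $\h$ and a holomorphic $u : G_p \to \h$ with
\[
1 - \overline{f(\mu)}\,f(\lambda) \ = \ \ip{(I - p(\mu)^* p(\lambda))\, u_\lambda}{u_\mu},
\]
where $p(\lambda)$ is read as an operator on $\h$ after tensoring with the identity in the natural way. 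This is the $p$-analogue of the model \eqref{eq61} behind Proposition \ref{prop62}. A lurking-isometry argument exactly as in Section \ref{sec6} then upgrades the model to a unitary colligation $L_\xi$, with $\xi = (a,\beta,\gamma,D)\in \R(\h)$, satisfying $f(\lambda) = F_\xi(p(\lambda))$ for the transfer function $F_\xi$ of \eqref{eq681}; this is the analogue of Proposition \ref{prop69}.

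The hard part is producing the Agler model from the bare inequality $\norm{f}_p \le 1$, and this is exactly where the matrix test class $\f_p$ is indispensable. For a scalar domain such as $\d^d$ with $d \ge 3$, the supremum of $|f|$ over scalar points forces no such model (von Neumann's inequality fails for three commuting contractions); replacing scalar points by all commuting matrix $d$-tuples in $\f_p$ is precisely the enrichment that makes the relevant operator-valued Pick kernels positive. Concretely I would run a Hahn--Banach cone-separation argument in the space of Hermitian kernels on $G_p$: if $1 - \overline{f(\mu)}f(\lambda)$ failed to lie in the closed cone generated by kernels of the form $(I - p(\mu)^* p(\lambda))\otimes k$ with $k$ positive semidefinite, then a separating functional would assemble, from finitely many points of $G_p$, a finite-dimensional commuting matrix tuple in $\f_p$ on which $\norm{f} > 1$, contradicting $\norm{f}_p \le 1$. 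This is the Ambrozie--Timotin/Ball--Bolotnikov mechanism, and it is the one genuinely technical step.

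Finally I transfer the realization to the free universe. Define $F(x) = F_\xi(\delta(x)) = a + \ip{\delta(x)(I - D\,\delta(x))^{-1}\gamma}{\beta}$ for $x \in B_\delta$, with identities tensored in as above. Since $\norm{\delta(x)} < 1$ and $\norm{D} \le 1$, the resolvent is a norm-convergent Neumann series, which simultaneously exhibits $F$ as a locally uniform limit of free polynomials, hence free holomorphic on $B_\delta$. Because $L_\xi$ is unitary, the standard transfer-function estimate gives $\norm{F(x)} \le 1$ for all $x \in B_\delta$. On a scalar point $([\lambda_1],\dots,[\lambda_d])$ with $\lambda \in G_p$ we have $\delta([\lambda_1],\dots,[\lambda_d]) = [p(\lambda)]$, so $F([\lambda_1],\dots,[\lambda_d]) = [F_\xi(p(\lambda))] = [f(\lambda)]$. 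Invoking the converse half on this $F$ promotes $\sup_{B_\delta}\norm{F} \le 1$ to the claimed equality $\sup_{x\in B_\delta}\norm{F(x)} = \norm{f}_p$, completing the proof.
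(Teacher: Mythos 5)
Your proposal is correct in outline, but it takes a genuinely different route from the paper --- in fact it is essentially the \emph{original} realization-theoretic proof (the Ambrozie--Timotin/Ball--Bolotnikov mechanism of \cite{amti03}, \cite{babo04}, used in \cite{agmc15b}) that this paper explicitly set out to replace. The paper's proof is a short reduction to Theorem \ref{free.thm.20}: one checks that $(B_\delta, B_\delta\cap\mdcom)$ is a Pick pair (closure under direct sums and the $V_x$ condition are immediate because $\mdcom$ is an algebraic set), uses Lemma \ref{lem914} to see that the functional calculus $g(x)=f(x)$ is defined on $E=B_\delta\cap\mdcom=\f_p$ and gives Pick data with $\sup_{x\in E}\norm{g(x)}=\norm{f}_p$, and then invokes the fact that Pick pairs are norm preserving; the converse direction is handled exactly as you handle it. Your route instead extracts an Agler model $1-\overline{f(\mu)}f(\lambda)=\ip{(I-p(\mu)^*p(\lambda))u_\lambda}{u_\mu}$ from $\norm{f}_p\le 1$ by cone separation, upgrades it to a unitary colligation by a lurking isometry, and ampliates the transfer function to $B_\delta$ via a Neumann series. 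What your approach buys is an explicit transfer-function formula for the extension and a direct exhibition of $F$ as a locally uniform limit of free polynomials; what the paper's approach buys is brevity and the avoidance of precisely the ``heavy lifting'' you defer to the separation step --- the paper remarks that the proof in \cite{agmc15b} required a great deal of heavy lifting, and the whole point of its argument is to bypass that by quoting Theorem \ref{free.thm.20} (whose own proof rests on the interpolation theorem of \cite{agmc13b} plus a Montel argument).

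Two caveats. First, your proof is complete only modulo the cone-separation/GNS step producing the model; you correctly identify it as the one genuinely technical step and attribute it correctly, but as written you are citing it rather than proving it, and it is the crux of your argument. Second, in both directions you silently identify the value of a free holomorphic function at a commuting tuple $x\in B_\delta\cap\mdcom$ with the holomorphic functional calculus of its scalar restriction; making sense of this requires $\sigma(x)\subseteq G_p$, which is exactly the paper's Lemma \ref{lem914}, together with a limiting argument from the free-polynomial approximation. The paper is nearly as terse on this point (``the inequality is automatic''), so this is a minor omission, but it is the one place where a lemma you did not state is genuinely needed.
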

The proof of Theorem \ref{free.thm.40} in \cite{agmc15b} required a great deal of heavy lifting. Here, we present a simple proof using the fact that Pick pairs are norm preserving.
\begin{proof}
Let $(D,E) = (B_\delta,B_\delta \cap \mdcom)$. Then $D$ is a basic set, $E$ is a subset of $D$, and as both $B_\delta$ and $\mdcom$ are closed with respect to direct sums, so also $E$ is closed with respect to direct sums, i.e., \eqref{free.60} holds. Also, since $\mdcom$ is an algebraic set in $\md$, it is clear that $V_x \subseteq \mdcom$ whenever $x \in \mdcom$, so that \eqref{free.70} holds. Therefore, $(D,E)$ is a Pick pair.

Now fix $f\in \hinf_p$. As $f$ is analytic on $G_p$ and $\sigma(x) \subseteq G_p$ whenever $x\in E$ 
by Lemma~\ref{lem914},
 we may define a function $g:E \to \mone$ by the formula
\be
\label{eq914}
\notag
g(x)=f(x),\qquad x \in E.
\ee

As $g$ is Pick data on $E$, and
\[
\sup_{x\in E}\norm {g(x)} = \sup_{x\in \f_p} \norm{f(x)} =\norm{f}_p,
\]
it follows from Theorem \ref{free.thm.20} that there exists $F$ with the desired properties.

To prove the converse, note that since $F$ is locally approximable by a sequence in $\pd$, we get that
$f$ is locally approximable by a sequence in $\pdcom$, and hence is holomorphic.
The inequality $\norm{f}_p \le \sup_{x\in B_\delta} \norm{F(x)}$ is automatic.
\end{proof}
\begin{lem}
\label{lem914}
Let $x \in B_\delta \cap \mdcom$, and let $p=[\delta]$.
Then $\sigma(x) \subseteq G_p$.
\end{lem}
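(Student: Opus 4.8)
The plan is to exhibit each $p(\lambda)$, for $\lambda\in\sigma(x)$, as a compression of the operator $\delta(x)=p(x)$, whose norm is strictly less than $1$ because $x\in B_\delta$. Write $x=(x_1,\dots,x_d)\in\mn^d$ for the relevant level $n$; since $x\in\mdcom$ these matrices commute, so $\sigma(x)$ is the finite set of joint eigenvalues, and by simultaneous Schur triangularization of a commuting family there is a unitary $V\in\mn$ with $V^*x_iV=T_i$ upper triangular for every $i$. Reading off the diagonals gives joint eigenvalues $\lambda^{(k)}:=\big((T_1)_{kk},\dots,(T_d)_{kk}\big)$ for $k=1,\dots,n$, and $\sigma(x)=\{\lambda^{(1)},\dots,\lambda^{(n)}\}$.

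Next I would record that, because the entries of $\delta$ collapse to their honest commutative images once the arguments commute, $\delta(x)=p(x)$, the $I\times J$ matrix whose $(i,j)$ entry is $p_{ij}(x)=V\,p_{ij}(T)\,V^*$. Each $p_{ij}(T)$ is again upper triangular, with $k$-th diagonal entry $p_{ij}(\lambda^{(k)})$. Regrouping the orthonormal bases of $\c^I\otimes\c^n$ and $\c^J\otimes\c^n$ by the triangularization index $k$ (rather than by the matrix index), the operator $p(x)$ becomes block upper triangular, and its $k$-th diagonal block is exactly the scalar matrix $p(\lambda^{(k)})=[p_{ij}(\lambda^{(k)})]\in M_{I\times J}(\c)$. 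Equivalently, if $R_k$ and $C_k$ denote the orthogonal projections onto the $k$-th coordinate copies of $\c^I$ and $\c^J$, then $R_k\,p(x)\,C_k=p(\lambda^{(k)})$.

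Finally, since a compression never increases the operator norm,
\[
\norm{p(\lambda^{(k)})}=\norm{R_k\,p(x)\,C_k}\le\norm{p(x)}=\norm{\delta(x)}<1,
\]
so each $\lambda^{(k)}\in G_p$, whence $\sigma(x)\subseteq G_p$.

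The routine but slightly delicate point is the bookkeeping in the middle step: verifying that, under the regrouping of basis vectors by eigenvalue index, the compression of the matrix-of-matrices $p(x)$ to the $k$-th coordinate subspaces really reproduces the scalar matrix $p(\lambda^{(k)})$, and that $\delta(x)=p(x)$ holds entrywise once the arguments commute. Both follow from the single fact that a polynomial in simultaneously upper-triangular matrices is itself upper triangular with diagonal obtained by evaluating the polynomial on the diagonal entries; I would isolate this observation as the main thing to check, after which the compression estimate finishes the proof.
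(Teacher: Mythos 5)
Your proposal is correct and follows essentially the same route as the paper: simultaneously upper-triangularize the commuting tuple, use the fact that a polynomial in simultaneously triangular matrices is triangular with diagonal entries $p_{ij}(\lambda^{(k)})$, and conclude $\norm{p(\lambda^{(k)})} \le \norm{p(x)} = \norm{\delta(x)} < 1$. The paper phrases the final norm estimate by applying $p(x)$ to $\xi \otimes v$ for a maximizing vector $\xi$ rather than as a compression, but this is the same computation.
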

\begin{proof}
Let $x \in \mn^d$.
Suppose $\l \in \sigma(x)$. Let $\xi$ be a unit vector in $\c^J$ such that
$\| p (\l) \xi \| = \| p( \l ) \|$.

Choose an orthonormal basis in $\c^n$ with respect to which $x$ is upper triangular.
Then for some $1 \leq j \leq n$,  we have $\l = (\l_1, \dots, \l_d)$ forms the $j^{\rm th}$ diagonal
entry of $x = (x_1, \dots, x_d)$  (see \cite{cur88} or \cite{coh14}). Write $v$ for this particular $e_j$.
So for any polynomial $q$ we have $q(x) v = q(\l) v + w$, where $w \perp v$.
So \[
p(x) (\xi \otimes v) = p(\l) (\xi ) \otimes v + \eta,
\] where $\eta \perp \c^J \otimes v$.
Therefore $\| p(x) \| \geq \| p(\l) \|$, so $\l \in G_p$.
\end{proof}

\subsection{$p$ $V$ Norms}
\label{ssec95}
It is easy to adapt the results of the previous subsection to the problem of extending holomorphic functions defined on varieties $V$ in $G_p$. The key is to replace the set $\f_p$ with the set $\f_{p,V}$, consisting of all $x\in \f_p$ with spectrum in $V$ and such that $F(x)$ depends only on the values of $F$ on $V$ whenever $F$ is holomorphic on a neighborhood of $V$.

The set $\f_{p,V}$ can be defined concretely as follows. For $\lambda \in G_p$ define $X_\lambda$ by
\[
X_\lambda = \set{x\in \mdcom}{\sigma(x)=\{\lambda\}}.
\]
For $V\subseteq G_p$ define $X_V$ to consist of the set of all finite direct sums
\[
x=\bigoplus_{i=1}^n x_{i},
\]
where $x_i \in X_{\lambda_i}$ for each $i$, and $\lambda_1,\ldots,\lambda_n$ are points in $V$. Finally, let $Y_V$ be defined by
\[
Y_V = \set{y\in \mdcom}{\exists_{x\in X_V}\ \  y \text{ is similar to } x}.
\]

Evidently, $Y_V$ is the collection of all pairwise commuting $d$-tuples of matrices with spectrum in $V$.
(This can be seen by decomposing the space on which the $d$-tuple acts into generalized eigenspaces.)
 We define a functional calculus for elements of $Y_V$ as follows. If $f$ is holomorphic on a neighborhood of $\lambda$ and $x\in X_\lambda$, then $f(x)$ can be defined by plugging $x$ into the power series expansion of $f$ at $\lambda$ (which will result in a finite sum). More generally, if $f$ is holomorphic on $V$, and
\[
y = S^{-1}\ \bigoplus_{i=1}^n x_{i}\ S\  \in Y_V,
\]
where $x_i \in X_{\lambda_i}$ for each $i$,
we define $f(y)$ by
\be
\label{eq916}
f(y)=S^{-1}\ \bigoplus_{i=1}^n f(x_{i})\ S
\ee
We shall say that $y \in Y_V$ is {\em subordinate} to $V$ if, whenever 
$f$ is holomorphic on a on a neighborhood $U$ of $\sigma(Y)$ in $\c^d$, and
vanishes
on $U \cap V$, 
 then $f(y) = 0$.
 
The promised set $\f_{p,V}$ is defined by
\[
\f_{p,V}=\set{y\in Y_V}{y \text{ is subordinate to } V}
\cap \f_p .
\]
As before, for $f\in \hol(V)$, define $\norm{f}_{p,V}$ by
\[
\norm{f}_{p,V} = \sup_{y\in \f_{p,V}} \norm{f(y)}.
\]
and let $\hinf_{p,V}$ denote the Banach algebra
\[
\hinf_{p,V} =\set{f\in \hol(V)}{\norm{f}_{p,v}<\infty}.
\]
\begin{exam}
Let $d =2$, let 
\[
p(\l) \ = \
\begin{bmatrix}
\l_1 & 0 \\
0 & \l_2
\end{bmatrix}, 
\]
so $G_p = \d^2$, and let 
\[
V \ = \ \{ \l \in \d^2 : \l_1^2 = \l_2^2 \} .
\]
Then $Y_V$ is the set of pairs $y = (y_1,y_2)$
 of commuting matrices whose spectrum is in $\d^2$,
and $\f_{p,V}$ is the subset for which both $y_1$ and $y_2$ have norm less than $1$ and in addition satisfy $y_1^2 = y_2^2$.
This is slightly larger than just the diagonalizable pairs of strict contractions; for example it contains the pair
\[
\begin{bmatrix}
c & d & 0 & 0 \\
0 & c & 0 & 0 \\
0 & 0 & c & d \\
0 & 0 & 0 & c
\end{bmatrix}
\text{ and }
\begin{bmatrix}
-c & -d  & 0 & 0\\
0 & -c & 0 & 0 \\
0 & 0 & c & d \\
0 & 0 & 0 & c
\end{bmatrix}
\]
whenever $c$ and $d$ are small enough to give contractions.
But $\f_{p,V}$ does not contain the pair
\[
\begin{bmatrix}
c & d \\
0 & c
\end{bmatrix}
\text{ and }
\begin{bmatrix}
-c & d \\
0 & -c
\end{bmatrix}
\]
whenever $c$ and $d$ are non-zero, even though this pair is in $Y_V$.
\end{exam}
A very similar proof to the one  for Theorem \ref{free.thm.40} yields the following.
\begin{thm}\label{free.thm.50}
Assume that $\delta$ is an $I\times J$ matrix of free polynomials and let $p=[\delta]$. Let $A$ be an algebraic set\footnote{i.e. $A$ is the common set of 0's of a collection of polynomials in $\pdcom$}
 in $\c^d$ and let $V=A \cap G_p$.
If $f \in \hinf_{p,V}$, then there exists a free holomorphic function $F$ on $B_\delta$ such that
\[
\forall_{\lambda \in V}\ \ F([\lambda_1],\ldots,[\lambda_d]) = [f(\lambda_1,\ldots,\lambda_d)]\ \qquad \text{ and }\qquad \sup_{x\in B_\delta} \norm{F(x)}=\norm{f}_{p,V}.
\]
Conversely, if $F$ is a bounded free holomorphic function on $B_\delta$ and $f \in \hol(V)$ is defined by
\[
[f(\lambda_1,\ldots,\lambda_d)]=F([\lambda_1],\ldots,[\lambda_d]),\qquad \lambda \in V,
\]
then $f\in \hinf_{p,V}$ and
\[
\norm{f}_p \le \sup_{x\in B_\delta} \norm{F(x)}.
\]
\end{thm}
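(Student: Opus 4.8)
The plan is to mirror the proof of Theorem~\ref{free.thm.40}, replacing the Pick pair $(B_\delta, B_\delta\cap\mdcom)$ by $(D,E):=(B_\delta,\f_{p,V})$. Since $\f_{p,V}\subseteq\f_p=B_\delta\cap\mdcom$, we have $E\subseteq D$, so the first task is to verify that $(D,E)$ is a Pick pair in the sense of Definition~\ref{free.def.10}. Closure under direct sums \eqref{free.60} is routine: a direct sum of commuting tuples is commuting with joint spectrum the union of the spectra, the functional calculus \eqref{eq916} is block diagonal, and $\norm{p(x_1\oplus x_2)}=\max_i\norm{p(x_i)}$, so both subordination to $V$ and membership in $\f_p$ are inherited.

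The heart of the matter, and the step I expect to be the main obstacle, is property \eqref{free.70}. The key is an algebraic reformulation of subordination: for a commuting tuple $y$ with $\norm{p(y)}<1$, I claim that $y$ is subordinate to $V$ if and only if $q(y)=0$ for every polynomial $q$ in the ideal $\mathcal{I}(A)\subseteq\pdcom$ of polynomials vanishing on $A$. The forward implication is immediate, since each such $q$ vanishes on $V$. For the converse one uses that $\sigma(y)\subseteq G_p$ by Lemma~\ref{lem914}, so that $q(y)=0$ for all $q\in\mathcal{I}(A)$ forces $\sigma(y)\subseteq A\cap G_p=V$; and near a point $\lambda\in\sigma(y)$ the analytic Nullstellensatz, together with the fact that the analytification of the reduced set $A$ is reduced, shows that any germ $h$ vanishing on $V$ lies in the ideal generated by $\mathcal{I}(A)$, whence $h(y)=\sum_i g_i(y)q_i(y)=0$ by multiplicativity of the commuting functional calculus. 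Granting this equivalence, \eqref{free.70} follows by lifting relations to $\pd$: if $x\in\f_{p,V}$ and $y\in V_x\cap B_\delta$, then $y$ commutes (the free commutator polynomials vanish at the commuting tuple $x$, hence lie in $I_x$ and vanish at $y$), $\norm{p(y)}<1$, and for every $q\in\mathcal{I}(A)$ any free lift $\tilde q$ with $[\tilde q]=q$ satisfies $\tilde q(x)=q(x)=0$, so $\tilde q\in I_x$ and hence $\tilde q(y)=q(y)=0$; thus $y$ is subordinate to $V$ and $y\in\f_{p,V}$.

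With $(D,E)$ established as a Pick pair, I would run the argument of Theorem~\ref{free.thm.40} almost verbatim. Given $f\in\hinf_{p,V}$, define $g:E\to\mone$ by $g(y)=f(y)$ via \eqref{eq916}; this is well defined precisely because each $y\in E$ is subordinate to $V$. Then $g$ is Pick data: on a tuple of finite spectrum $f(y)$ agrees with $q(y)=\pi(y)$ for a polynomial $q$ interpolating the relevant jets of $f$ and any free lift $\pi$, giving \eqref{free.80}, while \eqref{free.90} holds because the functional calculus respects direct sums. Since $\sup_{y\in E}\norm{g(y)}=\norm{f}_{p,V}$ by definition, Theorem~\ref{free.thm.20} produces a free holomorphic $F$ on $B_\delta$ extending $g$ with $\sup_{x\in B_\delta}\norm{F(x)}=\norm{f}_{p,V}$; evaluating at the $1\times 1$ tuples $([\lambda_1],\dots,[\lambda_d])$ for $\lambda\in V$, which lie in $E$, yields the desired identity $F([\lambda_1],\dots,[\lambda_d])=[f(\lambda_1,\dots,\lambda_d)]$.

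For the converse I would argue exactly as in Theorem~\ref{free.thm.40}: local uniform approximation of $F$ by free polynomials descends to approximation of $f$ by ordinary polynomials near $V$, so $f\in\hol(V)$. Finally, for any $y\in\f_{p,V}$ the restriction $\hat F$ of $F$ to the commutative slice is holomorphic near $\sigma(y)\subseteq V$ and extends $f$ there, so subordination gives $f(y)=\hat F(y)=F(y)$; hence $\norm{f}_{p,V}=\sup_{y\in\f_{p,V}}\norm{F(y)}\le\sup_{x\in B_\delta}\norm{F(x)}$. Thus the only genuinely new ingredient beyond Theorem~\ref{free.thm.40} is the algebraic characterization of subordination needed to verify \eqref{free.70}.
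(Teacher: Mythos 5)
Your proposal follows the paper's own proof: the same Pick pair $(D,E)=(B_\delta,\f_{p,V})$, the same definition of the Pick data $g$ via the functional calculus \eqref{eq916} with well-definedness coming from subordination, and the same appeal to Theorem~\ref{free.thm.20}, with the converse handled as in Theorem~\ref{free.thm.40}. The only difference is one of detail: where the paper dismisses \eqref{free.70} with the phrase ``since $A$ is an algebraic set,'' you supply the actual justification (the characterization of subordination by the vanishing of $\mathcal{I}(A)$, via the analytic Nullstellensatz and reducedness of the analytification), and that argument, along with your filled-in verification of the Pick data properties and of the converse inequality, is correct.
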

\begin{proof}
Let $(D,E) = (B_\delta, \f_{p,V})$. Then  \eqref{free.60} holds, and since
$A$ is an algebraic set, so does \eqref{free.70}. Therefore, $(D,E)$ is a Pick pair.

Now fix $f\in \hinf_{p,V}$ and $y \in E$.
 We may define a function $g:E \to \mone$ by  formula \eqref{eq916}.
Notice that this is well-defined. If at some $x_i$ with spectrum $\{ \l \} $ we choose two 
different holomorphic functions $g,h$ which agree with $f$ on a neighborhood of $x_i$ in
$V$, then $g(x_i) = h(x_i)$ since $x_i$ is subordinate to $V$.

As $g$ is Pick data on $E$, and
\[
\sup_{y\in E}\norm {g(x)} = \sup_{y\in \f_{p,V}} \norm{f(x)} =\norm{f}_{p,V},
\]
it follows from Theorem \ref{free.thm.20} that there exists $F$ with the desired properties.
\end{proof}

\subsection{A Sharp Commutative Cartan Extension Theorem}

We can now use the non-commutative result Theorem \ref{free.thm.50} to
prove a commutative extension theorem with sharp bounds for algebraic sets in operhedrons.

\begin{thm}\label{free.thm.60}
Let $p\in \pdcomij$ and assume that $V$ is an
algebraic set
 in $G_p$.
 If $f\in \hinf_{p,V}$, then there exists an extension  $F\in \hinf_p$ such that
\be
\label{eq920}
F|V =f\qquad \text{}\qquad \norm{F}_p = \norm{f}_{p,V}.
\ee
 Moreover, the norm estimate in \eqref{eq920} cannot be improved.
  If $F\in \hinf_p$, then $F|V\in \hinf_{p,V}$ and $\norm{F|V}_{p,V}\le \norm{F}_p$.
\end{thm}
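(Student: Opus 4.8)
The plan is to transport the problem to the free world, extend there with the machinery already in place, and then descend. Write $p=[\delta]$ for some $I\times J$ matrix $\delta$ of free polynomials, so that $G_p$ is the operhedron $\set{\lambda}{\norm{p(\lambda)}<1}$ and $B_\delta$ is the corresponding basic set. Given $f\in\hinf_{p,V}$, Theorem~\ref{free.thm.50} supplies a bounded free holomorphic function $F$ on $B_\delta$ with $F([\lambda_1],\ldots,[\lambda_d])=[f(\lambda)]$ for all $\lambda\in V$ and $\sup_{x\in B_\delta}\norm{F(x)}=\norm{f}_{p,V}$. Applying the converse half of Theorem~\ref{free.thm.40} to $F$, I obtain a commutative function $\tilde F\in\hinf_p$, defined by $[\tilde F(\lambda)]=F([\lambda_1],\ldots,[\lambda_d])$ for $\lambda\in G_p$, satisfying $\norm{\tilde F}_p\le\sup_{x\in B_\delta}\norm{F(x)}=\norm{f}_{p,V}$. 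Reading the same identity on $V$ gives $[\tilde F(\lambda)]=[f(\lambda)]$, i.e.\ $\tilde F|V=f$, so $\tilde F$ is a genuine extension of $f$ with $\norm{\tilde F}_p\le\norm{f}_{p,V}$.

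To finish I need the reverse inequality and, separately, the sharpness claim; both rest on a single fact, which I would isolate first: for any $F\in\hinf_p$ one has $F|V\in\hinf_{p,V}$ and $\norm{F|V}_{p,V}\le\norm{F}_p$. Since $\f_{p,V}\subseteq\f_p$ directly from its definition, it suffices to check that $F(y)=(F|V)(y)$ for every $y\in\f_{p,V}$, the right-hand side being the functional calculus of $\hol(V)$ from \eqref{eq916}; then $\norm{F|V}_{p,V}=\sup_{y\in\f_{p,V}}\norm{(F|V)(y)}=\sup_{y\in\f_{p,V}}\norm{F(y)}\le\sup_{x\in\f_p}\norm{F(x)}=\norm{F}_p$. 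This compatibility is exactly where the subordination hypothesis does its work: by definition $y\in\f_{p,V}$ is subordinate to $V$, which is precisely the condition making the calculus of \eqref{eq916} well defined and independent of the chosen local extension; since $F$ is itself a holomorphic function on a neighborhood of $\sigma(y)$ whose restriction to $V$ is $F|V$, it is one of the admissible extensions, and subordination forces $(F|V)(y)=F(y)$.

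Granting this restriction fact, the theorem follows quickly. Applying it to $\tilde F$ gives $\norm{f}_{p,V}=\norm{\tilde F|V}_{p,V}\le\norm{\tilde F}_p$, which combined with the inequality from the first paragraph yields $\norm{\tilde F}_p=\norm{f}_{p,V}$, establishing \eqref{eq920} with $F=\tilde F$. For sharpness, any extension $F\in\hinf_p$ of $f$ satisfies $\norm{F}_p\ge\norm{F|V}_{p,V}=\norm{f}_{p,V}$ by the same fact, so no extension can have $p$-norm below $\norm{f}_{p,V}$; hence the value attained by $\tilde F$ cannot be improved. The final assertion of the statement is precisely the restriction fact itself.

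I expect the only genuine obstacle to be the verification that $F(y)=(F|V)(y)$ for $y\in\f_{p,V}$, i.e.\ that the ambient functional calculus on $G_p$ agrees with the intrinsic calculus on $V$ at subordinate tuples. This amounts to unwinding the description of $Y_V$ (writing $y$ as a similarity applied to a direct sum of tuples with one-point spectrum) together with the definition of subordination, and confirming that $F$ qualifies as a permissible extension at each block. It is conceptually routine, but it is the one place where the hypotheses defining $\f_{p,V}$ are actually used, and every equality in the theorem ultimately passes through it.
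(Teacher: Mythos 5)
Your proposal is correct and follows essentially the same route as the paper: pass to the free setting via Theorem \ref{free.thm.50}, restrict the free extension back to $G_p$ to get $\norm{F}_p \le \norm{f}_{p,V}$, and obtain the reverse inequality (and the final restriction statement) from the fact that $\norm{\cdot}_{p,V}$ is a supremum over the smaller set $\f_{p,V} \subseteq \f_p$. The only difference is one of explicitness: you spell out the functional-calculus compatibility $F(y) = (F|V)(y)$ for subordinate $y$ and invoke the converse half of Theorem \ref{free.thm.40} for the descent, steps the paper compresses into defining $F = \Phi|_{G_p}$ and the phrase ``since we are taking the supremum over a larger set.''
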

\begin{proof}
Let $f\in \hinf_{p,V}$. Choose $\delta$ in $\pdij$ so that
$p = [ \delta]$. By Theorem~\ref{free.thm.50} there exists a free holomorphic function 
$\Phi$ on $B_\delta$ with norm equal to $\norm{f}_{p,V}$ that extends $f$.
Define $F = \Phi |_{G_p}$. Then 
\be
\| F \|_p \ \leq \ \| \Phi \|_{B_\delta} \ = \ \norm{f}_{p,V} .
\label{eq921}
\ee
But if $F|V = f$, we must have
\[
\| F \|_p \ \geq \ \norm{f}_{p,V} ,
\]
since we are taking the supremum over a larger set.
Therefore we must have equality in \eqref{eq921}.
\end{proof}

%

\bibliography{references}

\begin{thebibliography}{10}

\bibitem{aac99}
K.~Adachi, M.~Andersson, and H.R. Cho.
\newblock {$L^p$} and {$H^p$} extensions of holomorphic functions from
  subvarieties of analytic polyhedra.
\newblock {\em Pacific Math. J.}, 189(2):201--210, 1999.

\bibitem{ag90}
J.~Agler.
\newblock On the representation of certain holomorphic functions defined on a
  polydisc.
\newblock In {\em Operator Theory: Advances and Applications, {Vol. 48}}, pages
  47--66. {Birkh\"auser}, Basel, 1990.

\bibitem{agmc_vn}
J.~Agler and J.E. M\raise.45ex\hbox{c}Carthy.
\newblock Norm preserving extensions of holomorphic functions from subvarieties
  of the bidisk.
\newblock {\em Ann. of Math.}, 157(1):289--312, 2003.

\bibitem{agmc15b}
J.~Agler and J.E. M\raise.45ex\hbox{c}Carthy.
\newblock Global holomorphic functions in several non-commuting variables.
\newblock {\em Canad. J. Math.}, 67(2):241--285, 2015.

\bibitem{agmc15a}
J.~Agler and J.E. M\raise.45ex\hbox{c}Carthy.
\newblock Operator theory and the {Oka} extension theorem.
\newblock {\em Hiroshima Math. J.}, 45(1):9--34, 2015.

\bibitem{agmc13b}
J.~Agler and J.E. M\raise.45ex\hbox{c}Carthy.
\newblock {Pick Interpolation for free holomorphic functions}.
\newblock {\em Amer. J. Math.}, 137(6):1685--1701, 2015.

\bibitem{amy20}
J.~Agler, J.E. M\raise.45ex\hbox{c}Carthy, and N.J. Young.
\newblock {\em Operator Analysis}.
\newblock Cambridge University Press, Cambridge, 2020.

\bibitem{aly19}
Jim Agler, Zinaida Lykova, and Nicholas Young.
\newblock Geodesics, retracts, and the norm-preserving extension property in
  the symmetrized bidisc.
\newblock {\em Mem. Amer. Math. Soc.}, 258(1242):vii+108, 2019.

\bibitem{ale69}
H.~Alexander.
\newblock Extending bounded holomorphic functions from certain subvarieties of
  a polydisc.
\newblock {\em Pacific Math. J.}, 29, 1969.

\bibitem{alpkal}
D.~Alpay and D.~S. Kalyuzhnyi-Verbovetzkii.
\newblock {Matrix-J-}unitary noncommutative rational formal power series.
\newblock In {\emph{The state space method generalizations and applications}},
  vol. 161, pp. 49--113. Birkh{\"a}user, Basel, 2006.

\bibitem{amti03}
C-G. Ambrozie and D.~Timotin.
\newblock A von {N}eumann type inequality for certain domains in {${\mathbf
  C}^n$}.
\newblock {\em Proc. Amer. Math. Soc.}, 131(3):859--869, 2003.

\bibitem{babo04}
J.A. Ball and V.~Bolotnikov.
\newblock Realization and interpolation for {Schur-Agler}- class functions on
  domains with matrix polynomial defining function in {$\mathbb{C}^n$}.
\newblock {\em J. Funct. Anal.}, 213:45--87, 2004.

\bibitem{bgm}
J.A. Ball, G.~Groenewald, and T.~Malakorn.
\newblock Conservative structured noncommutative multidimensional linear
  systems.
\newblock In {\emph{The state space method generalizations and applications}},
  vol. 161, pp. 179--223. Birkh{\"a}user, Basel, 2006.

\bibitem{bhsa18}
T.~Bhattacharyya and H.~Sau.
\newblock Holomorphic functions on the symmetrized bidisk---realization,
  interpolation and extension.
\newblock {\em J. Funct. Anal.}, 274(2):504--524, 2018.

\bibitem{car51}
H.~Cartan.
\newblock {\em S\'eminaire {Henri Cartan} 1951/2}.
\newblock W.A. Benjamin, New York, 1967.

\bibitem{chi90}
E.M. Chirka.
\newblock Complex analytic sets.
\newblock In A.G. Vitushkin, editor, {\em Several Complex Variables {I}}, pages
  117--158. Springer, Berlin, 1990.

\bibitem{coh14}
D.~Cohen.
\newblock Dilations of matrices.
\newblock https://arxiv.org/pdf/1503.07334.pdf.

\bibitem{cur88}
R.E. Curto.
\newblock Applications of several complex variables to multiparameter spectral
  theory.
\newblock In J.B. Conway and B.B. Morrel, editors, {\em Surveys of some recent
  results in Operator Theory}, pages 25--90. Longman Scientific, Harlow, 1988.

\bibitem{gur}
R.~Gunning and H.~Rossi.
\newblock {\em Analytic functions of several complex variables}.
\newblock Prentice-Hall, Englewood Cliffs, 1965.

\bibitem{hs81}
L.~F. Heath and T.~J. Suffridge.
\newblock Holomorphic retracts in complex {$n$}-space.
\newblock {\em Illinois J. Math.}, 25(1):125--135, 1981.

\bibitem{helt02}
J.~W. Helton.
\newblock {"Positive"} noncommutative polynomials are sums of squares.
\newblock {\em Ann. of Math.}, 156(2):675--694, 2002.

\bibitem{hkm11a}
J.~W. Helton, I.~Klep, and S.~McCullough.
\newblock Analytic mappings between noncommutative pencil balls.
\newblock {\em J. Math. Anal. Appl.}, 376(2):407--428, 2011.

\bibitem{hkm11b}
J.~W. Helton, I.~Klep, and S.~McCullough.
\newblock Proper free analytic maps.
\newblock {\em Jour. of Funct. Anal.}, 260(5):1476--1490, 2011.

\bibitem{hm12}
J.~W. Helton and S.~McCullough.
\newblock Every convex free basic semi-algebraic set has an {LMI}
  representation.
\newblock {\em Ann. of Math. (2)}, 176(2):979--1013, 2012.

\bibitem{henlei84}
G.M. Henkin and J.~Leiterer.
\newblock {\em Theory of Functions on Complex Manifolds}.
\newblock {Birkh\"auser}, Basel, 1984.

\bibitem{henpol84}
G.M. Henkin and P.L. Polyakov.
\newblock Prolongement des fonctions holomorphes bornes d'une sous-varit du
  polydisque.
\newblock {\em Comptes Rendus Acad. Sci. Paris S\'er. I Math.},
  298(10):221--224, 1984.

\bibitem{hor}
L.~{H\"ormander}.
\newblock {\em An introduction to complex analysis in several variables}.
\newblock North Holland Publishing Co., Amsterdam, 1973.

\bibitem{jj20}
P.~Jak\'obczak and M.~Jarnicki.
\newblock Lectures on holomorphic functions of several complex variables.
\newblock http://www2.im.uj.edu.pl/MarekJarnicki/lectures/scv.pdf.

\bibitem{kv14}
Dmitry~S. Kaliuzhnyi-Verbovetskyi and Victor Vinnikov.
\newblock {\em Foundations of noncommutative function theory, Math. Surveys
  Monogr.}
\newblock American Mathematical Society, Providence, RI, 2014.

\bibitem{kn08ua}
G.~Knese.
\newblock Polynomials defining distinguished varieties.
\newblock {\em Trans. Amer. Math. Soc.}, 362:5635--5655, 2010.

\bibitem{kmc19}
{\L}ukasz Kosi\'{n}ski and John~E. M\raise.45ex\hbox{c}Carthy.
\newblock Norm preserving extensions of bounded holomorphic functions.
\newblock {\em Trans. Amer. Math. Soc.}, 371(10):7243--7257, 2019.

\bibitem{kmc20}
{\L}ukasz Kosi\'{n}ski and John~E. M\raise.45ex\hbox{c}Carthy.
\newblock Extensions of bounded holomorphic functions on the tridisk.
\newblock {\em Rev. Mat. Iberoam.}, 36(3):791--816, 2020.

\bibitem{mic56}
E.~Michael.
\newblock Continuous selections {I}.
\newblock {\em Ann. of Math.}, 63:361--382, 1956.

\bibitem{po06}
G.~Popescu.
\newblock Free holomorphic functions on the unit ball of {$B(\mathcal{H})^n$}.
\newblock {\em J. Funct. Anal.}, 241(1):268--333, 2006.

\bibitem{po08}
G.~Popescu.
\newblock Free holomorphic functions and interpolation.
\newblock {\em Math. Ann.}, 342(1):1--30, 2008.

\bibitem{po10}
G.~Popescu.
\newblock Free holomorphic automorphisms of the unit ball of
  {$B(\mathcal{H})^n$}.
\newblock {\em J. Reine Angew. Math.}, 638:119--168, 2010.

\bibitem{po11}
G.~Popescu.
\newblock Free biholomorphic classification of noncommutative domains.
\newblock {\em Int. Math. Res. Not. IMRN}, (4):784--850, 2011.

\bibitem{ran}
R.~Michael Range.
\newblock {\em Holomorphic Functions and Integral Representations in Several
  Complex Variables}.
\newblock Springer-Verlag, New York, 1986.

\bibitem{rud69}
W.~Rudin.
\newblock {\em Function Theory in {Polydiscs}}.
\newblock Benjamin, New York, 1969.

\bibitem{sto75}
E.L. Stout.
\newblock Bounded extensions. the case of discs in polydiscs.
\newblock {\em J. Analyse Math.}, 28:239--254, 1975.

\bibitem{szn53}
B.~Szokefalvi-Nagy.
\newblock {Sur les contractions de l'\'espace de Hilbert}.
\newblock {\em Acta Sci. Math.}, 15:87--92, 1953.

\bibitem{tay}
J.~L. Taylor.
\newblock {\em Several Complex Variables with Connections to Algebraic Geometry
  and Lie Groups}.
\newblock American Mathematical Society, Providence, 2002.

\bibitem{tay73}
J.L. Taylor.
\newblock Functions of several non-commuting variables.
\newblock {\em Bull. Amer. Math. Soc.}, 79:1--34, 1973.

\bibitem{tho03}
P.~Thomas.
\newblock Appendix to norm preserving extensions of holomorphic functions from
  subvarieties of the bidisk.
\newblock {\em Ann. of Math.}, 157(1):310--311, 2003.

\bibitem{voi04}
D.~Voiculescu.
\newblock {Free analysis questions. I. Duality transform for the coalgebra of
  $\partial_{X:B}$}.
\newblock {\em Int. Math. Res. Not.}, (16):793--822, 2004.

\bibitem{voi10}
D.~Voiculescu.
\newblock {Free analysis questions. II: the Grassmannian completion and the
  series expansions at the origin}.
\newblock {\em J. Reine Angew. Math}, 645:155--236, 2010.

\end{thebibliography}
\end{document}